\documentclass[reqno]{amsart}
 
\usepackage{amsmath}
\usepackage{amsthm,amssymb,color,comment}
\usepackage{bbm}
\usepackage{hyperref}
\usepackage[TS1,T1]{fontenc}
\usepackage[utf8]{inputenc}
\usepackage{dsfont}
\usepackage{tikz}
\usepackage{dsfont}
\usetikzlibrary{arrows.meta,calc,positioning,shadows,shapes}
\usepackage{enumitem}
\usepackage{textcomp}
\usepackage{subfigure}
\usepackage{accents}
\usepackage{bm}
\usepackage{mathtools}
\usepackage{bbold}
\usepackage[sort]{cite}% to arrange citations in increasing order

\numberwithin{equation}{section}

\newtheorem{thm}{Theorem}[section]
\newtheorem{proposition}[thm]{Proposition}
\newtheorem{lem}[thm]{Lemma}

\newtheorem{Def}[thm]{Definition}

\theoremstyle{definition}
\newtheorem{Ass}[thm]{Assumption}
\newtheorem{rem}[thm]{Remark}

\newtheorem{Ex}[thm]{Example}

\DeclareMathOperator*{\esssup}{ess\,sup}
\DeclareMathOperator*{\essinf}{ess\,inf}
\DeclareMathOperator{\DIV}{div}
\DeclareMathOperator{\Span}{span}

\DeclareMathOperator{\sgn}{sgn}

\newcommand{\Rho}{\mathrm{P}}
\newcommand{\Bu}{\textbf{U}}
\newcommand{\bwi}{\bm{\omega_i}}
\newcommand{\bwj}{\bm{\omega_j}}

\newcommand{\R}{\mathbb{R}}
\newcommand{\Td}{\mathbb{T}^{d}}

\newcommand{\p}{\partial}
\newcommand {\f}{\frac}
\newcommand{\eps}{\varepsilon}

\newcommand{\diff}{\mathop{}\!\mathrm{d}}
\newcommand{\ie}{\emph{i.e.}\;}
%Musielak-Orlicz commands:

%% fast diff
\newcommand{\bu}{\textbf{u}}

\newcommand{\weaks}{\overset{\ast}{\rightharpoonup}}

\DeclarePairedDelimiter{\norm}{\lVert}{\rVert}

%%% parameters for TiKz
\tikzstyle{line} = [draw]

\tikzset{label/.style={draw=gray, ultra thin, rounded corners=.25ex, fill=gray!20,text width=4cm, text badly centered,  inner sep=2ex, anchor=east, minimum height=4em}}

%double tildas %%%%%%%%%%%%%%%%%%%%%%%
\makeatletter
\newcommand{\doublewidetilde}[1]{{%
  \mathpalette\double@widetilde{#1}%
}}
\newcommand{\double@widetilde}[2]{%
  \sbox\z@{$\m@th#1\widetilde{#2}$}%
  \ht\z@=.9\ht\z@
  \widetilde{\box\z@}%
}
\makeatother
%%%%%%%%%%%%%%%%%%%%%%%%%%%%%%
\textwidth 6 in
\evensidemargin 0.2 in 
\oddsidemargin 0.2 in

\parskip 3pt
\parindent 0pt 

\author{Charles Elbar}
\address{{\it Charles Elbar:} Sorbonne Universit\'{e}, Laboratoire Jacques-Louis Lions (LJLL), F-75005 Paris, France}
\email{charles.elbar@sorbonne-universite.fr}
\thanks{}

\author{Piotr Gwiazda}
\address{{\it Piotr Gwiazda:} Institute of Mathematics of Polish Academy of Sciences}
\email{pgwiazda@mimuw.edu.pl}
\thanks{P.G. was supported by the National Science Center (Poland), project 2018/31/B/ST1/02289.}

\author{Jakub Skrzeczkowski}
\address{{\it Jakub Skrzeczkowski: } Faculty of Mathematics, Informatics and Mechanics, University of Warsaw, Poland}
\email{jakub.skrzeczkowski@student.uw.edu.pl}
\thanks{J.S. and A.Ś.-G. were supported by the National Science Center (Poland), project 2018/30/M/ST1/00423.}

\author{Agnieszka Świerczewska-Gwiazda}
\address{{\it Agnieszka Świerczewska-Gwiazda:} Faculty of Mathematics, Informatics and Mechanics, University of Warsaw, Poland}
\email{aswiercz@mimuw.edu.pl}
\thanks{}

\begin{document}

\title[]{From nonlocal Euler-Korteweg to local Cahn-Hilliard via the high-friction limit}

\begin{abstract}
Several recent papers considered the high-friction limit for systems arising in fluid mechanics. Following this approach, we rigorously derive the nonlocal Cahn-Hilliard equation as a limit of the nonlocal Euler-Korteweg equation using the relative entropy method. 
%, we prove rigorously that the solutions converge to the nonlocal Cahn-Hilliard equation. 
Applying the recent result by the first and third author, we also derive rigorously {the large-friction nonlocal-to-local limit}. %{Consequently, by choosing the nonlocal primitive system and following two-step procedure we are able to derive the equation without making ...} 
The proof is formulated for dissipative measure-valued solutions of the nonlocal Euler-Korteweg equation which are known to exist on arbitrary intervals of time. 
%As observed by Lattanzio and Tzavaras ({\it Comm. PDEs}, 2017), for the Euler-Korteweg equation, the limit yields the degenerate Cahn-Hilliard equation. However, their proof assumes certain properties of the limit system which are not guaranteed by the current theory. To overcome this problem, we propose to consider the nonlocal Euler-Korteweg equation.
Our work provides a new method to derive equations not enjoying classical solutions via relative entropy method by introducing the nonlocal effect in the fluid equation. 
\end{abstract}

\keywords{Cahn–Hilliard equation, Euler-Korteweg equation, nonlocal equation, high-friction limit, relative entropy}

\subjclass{35Q35, 76D45, 35B25, 35K55, 35Q31}

\maketitle
\setcounter{tocdepth}{1}

\section{Introduction}

We consider the nonlocal Euler-Korteweg system re-scaled in time \ie $t\to \f{t}{\eps}$ and with high friction coefficient $\f{1}{\eps}$
\begin{align}
&\p_{t}\rho+\f{1}{\eps}\DIV(\rho \bu)=0,\quad &\text{in}\quad &(0,+\infty)\times \Td,\label{eq:EK1}\\
&\p_{t}(\rho \bu)+\f{1}{\eps}\DIV\left(\rho\bu\otimes\bu\right)=-\f{1}{\eps^{2}} \rho\bu-\f{1}{\eps}\rho\nabla(F'(\rho)+B_{\eta}[\rho]),\quad &\text{in}\quad &(0,+\infty)\times \Td.\label{eq:EK2}
\end{align}
This equation models the long-time asymptotics of the motion of a compressible fluid with density $\rho$, velocity $\bu$ which is in fact a liquid-vapor mixture. The fluid experiences high friction (due to the term $-\f{1}{\eps^{2}} \rho\bu$) and additional capillary effects in the transition zone between liquid and vapor (due to the term $-\f{1}{\eps}\rho\nabla(F'(\rho)+B_{\eta}[\rho])$ as proposed by Korteweg \cite{kortewlg1901forme}).\\

Concerning the notation, $\Td$ is the $d$-dimensional flat torus, $\eps>0$, $B_{\eta}$ is the nonlocal operator approximating $-\Delta$ operator, defined by
\begin{equation}\label{operatorB}
B_\eta[\rho](x) = \f{1}{\eta^{2}}(\rho(x)-\omega_{\eta}\ast \rho(x))=\f{1}{\eta^{2}}\int_{\Td}\omega_{\eta}(y)(\rho(x)-\rho(x-y)) \diff y
\end{equation}
for $\eta>0$ small enough and $\omega_{\eta}$ is the usual radial mollification kernel $\omega_{\eta}(x)=\frac{1}{\eta^{d}}\omega(\frac{x}{\eta})$ with $\omega$ compactly supported in the unit ball of $\R^{d}$ satisfying
\begin{equation}
\int_{\R^{d}}\omega(y) \diff y =1, \quad \int_{\R^{d}} y\, \omega(y) \diff y =0,  \quad \int_{\R^{d}}   y_i y_j \omega \diff y = \delta_{i,j}\f{2D}{d}  < \infty.
\label{as:omega}
\end{equation}

When $\eps$ is very small, the friction is so big, that we mostly observe a phase separation phenomenon between the liquid and the vapor. More rigorously, when $\eps \to 0$, we prove that the constructed solution of~\eqref{eq:EK1}-\eqref{eq:EK2} converge to solutions of the nonlocal Cahn-Hilliard  \begin{align}
\partial_t \rho = \DIV(\rho \nabla \mu),\quad \text{in}\quad &(0,+\infty)\times \Td,\label{eq:CHNL1}\\
\mu = B_{\eta}[\rho] + F'(\rho),\quad \text{in}\quad &(0,+\infty)\times \Td \label{eq:CHNL2},
\end{align}
see Theorem \ref{thm:conv_EK_CH_nonloc}. Furthermore, when $\eps, \eta \to 0$ in some scaling to be determined, we prove the convergence of~\eqref{eq:EK1}-\eqref{eq:EK2} to the local Cahn-Hilliard equation
 \begin{align}
\partial_t \rho = \DIV(\rho \nabla \mu),\quad \text{in}\quad &(0,+\infty)\times \Td,\label{eq:CH1}\\
\mu = -D\Delta \rho + F'(\rho),\quad \text{in}\quad &(0,+\infty)\times \Td, \label{eq:CH2}
\end{align}
which describes the dynamics of phase separation, see Theorem \ref{thm:conv_EK_CH_loc}. \\

Our proof relies on the relative entropy method, which is for instance often used in the context of weak-strong uniqueness. It relies on certain regularity of solutions of the limit system, which is not available in the case of the local Cahn-Hilliard equation. Therefore, we introduce an intermediate step, which is interesting by itself, and consider the nonlocal Cahn-Hilliard equation by introducing the parameter $\eta$. Since we know from~\cite{MR4574535} that the solutions to the nonlocal Cahn-Hilliard equation converge to the weak solutions of the local Cahn-Hilliard equation (see Definition \ref{def:weak_sol_local}) when $\eta\to 0$, it remains to prove that the nonlocal Euler-Korteweg system tends to the nonlocal Cahn-Hilliard equation when $\eps\to 0$. Then, sending $\eps$ and $\eta$ to 0 with the appropriate scaling, we prove the result.    

\begin{figure}[!htb]
\centering
\resizebox{\textwidth}{!}{%
\begin{tikzpicture}[node distance=7.8cm,auto]
%,>=latex'
% Place nodes
\node [label] (init) {Nonlocal Euler-Korteweg \eqref{eq:EK1}-\eqref{eq:EK2}};
\node [label, right of=init] (node) {Non-local degenerate Cahn-Hilliard \eqref{eq:CHNL1}-\eqref{eq:CHNL2}};
\node [label, right of=node] (node1) {Local degenerate Cahn-Hilliard \eqref{eq:CH1}-\eqref{eq:CH2}};

% Draw edges
\draw[-{Latex[length=3mm, width=3mm]}] (init) -- node[below] {Theorem~\ref{thm:conv_EK_CH_nonloc}} node[above] {$\eps\to 0$} (node);

\draw[-{Latex[length=3mm, width=3mm]}] (node) -- node[below] {Proved in \cite{MR4574535}} node[above] {$\eta\to 0$} (node1);

\draw[-{Latex[length=3mm, width=3mm]}] (init) to[bend right] node[midway,below,inner sep=2pt] {Theorem~\ref{thm:conv_EK_CH_loc} } node[midway,above,inner sep=8pt] {$\eps\to 0$, $\eta\to 0$ together} (node1);

%node[above] {open problem} node[below] {$\omega^S, \omega_{\alpha}^L \weaks \delta_0$}

\end{tikzpicture}
}
\caption{Relation between the three equations considered in this article.}
\label{fig:results}
\end {figure}
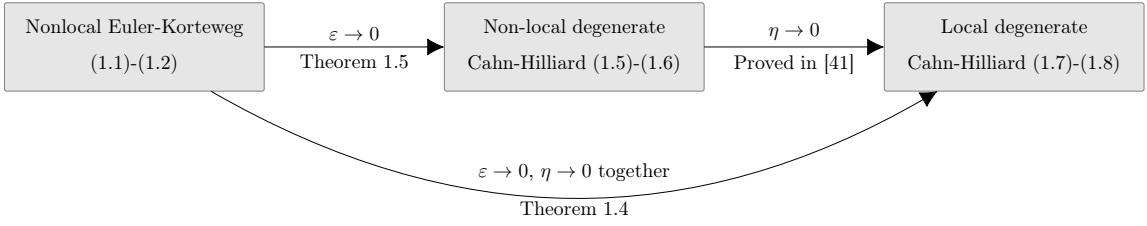

The main motivation for our work is the paper of Lattanzio and Tzavaras \cite{MR3615546}, who prove the convergence of the local Euler Korteweg system to the local Cahn-Hilliard equation. They assume the existence of admissible weak solutions of the first system and classical solutions of the second one. The first assumption is a drawback as dissipative (that is, satisfying energy inequality) weak solutions existing on arbitrary intervals of time are not known to exist for most models in fluids dynamics. One can try to construct the solutions via the convex integration method but these solutions will have a jump in the energy at the initial time so they will not be dissipative. The second assumption of \cite{MR3615546} is also difficult to be satisfied as so far, there is no theory of classical solutions to the local Cahn-Hilliard equation with degenerate mobility on arbitrary intervals of time. Similarly, there is no maximum principle that is necessary in \cite{MR3615546} to deduce that the classical solution is strictly positive using positivity of the initial condition.\\

We propose to overcome the first problem by the concept of dissipative measure-valued solutions, introduced by DiPerna \cite{MR775191} in the context of hyperbolic conservation laws in one dimension and by DiPerna and Majda \cite{MR877643} for the incompressible Euler equations. Roughly speaking, they are defined as the weak limit of classical solutions of appropriate approximating problems. As weak compactness is not sufficient to pass to the limit in nonlinear terms, the definition of the measure-valued solution includes the Young measure $\nu_{t,x}$ and the concentration measure $m$ to represent weak limits as in \eqref{eq:notation_bar}.\\

While measure-valued solutions are weaker than the usual weak solutions, they are dissipative and they are known to exist. Moreover, their importance comes from the fact that they enjoy the property called weak-strong uniqueness: they coincide with the strong solution whenever the latter exists. The dissipativity is important both for the weak-strong uniqueness and application of the relative entropy method: the weak-strong uniqueness does not hold for weak or measure-valued solutions without any condition on energy as demonstrated by solutions arising by the convex integration method \cite{MR3340997, MR2600877}.\\

Since the weak-strong uniqueness property was observed by Brenier, De Lellis and Székelyhidi in \cite{MR2805464}, measure-valued solutions were studied for several systems including compressible fluid models \cite{MR3424896}, isentropic Euler system \cite{MR4154934},  polyconvex elastodynamics \cite{MR2960036}, Euler-Poisson system \cite{Debiec}, general hyperbolic conservation laws \cite{MR4093617}. Moreover, for many equations describing compressible fluids, the measure-valued formulation has been significantly simplified \cite{MR4271964,MR4416230,MR4102807}: it boils down to the usual distributional identity modulo the so-called Reynolds stress tensor.    \\

Concerning the problem of the existence of classical solutions, we propose to introduce nonlocality in the equation and introduce an intermediate step in the convergence analysis as outlined in Figure~\ref{fig:results}. The advantage is that the nonlocal Cahn-Hilliard equation is in fact a porous medium equation. In particular, it satisfies the maximum principle and so, if the initial condition is positive, the solution remains positive and one can prove the existence and uniqueness of a classical solution, see Section~\ref{sect:class_sol}. Furthermore, it is known that the nonlocal Cahn-Hilliard equation converges to the local one \cite{MR4574535} so that at the end, the nonlocality can be removed.   \\

To prove the convergence, we use the relative entropy method. The method is based on introducing a functional called relative entropy (or energy), which measures the dissipation between two solutions of the system. Essentially, the same method is used to prove the aforementioned weak-strong uniqueness when the relative entropy measures the distance between weak (measure-valued) and strong solutions. This strategy has been applied for several singular limits \cite{MR4149680,MR4126770,MR4349783,MR4160173,MR3987806,MR3615546,MR3056757} and we also refer to the excellent review on weak-strong uniqueness~\cite{MR3838055}.\\

Our proof via the relative entropy method is based on an important assumption that the initial datum is well-prepared. In our case, this means that the initial velocity ${ \bu^0}$ vanishes as the parameter $\varepsilon \to 0$ cf. \eqref{eq:thm14_velocity_to_zero} and \eqref{eq:thm14_velocity_to_zero_2}  so that the initial kinetic energy is very small. Such an assumption is necessary to guarantee that the relative entropy $\Theta(0)$ at time $t=0$ converges to 0 as $\varepsilon \to 0$ so that $\Theta(t) \to 0$, cf. \eqref{est:relative_entropy_4}, which implies the main result. Let us however remark that one can also study similar problems via compactness methods and this approach is also effective for ill-prepared initial data. Nevertheless, its applicability is restricted to some special cases like one spatial dimension (which allows to use the div-curl lemma in the time-space setting) \cite{MR1042662} or the presence of viscosity terms yielding compactness \cite{MR2384572}. 

\subsection{Rigorous formulation of the main result}
We make the following assumptions on the potential $F$.

\begin{Ass}[potential $F$]\label{ass:potentialF} For the interaction potential we assume that there exists $k \geq 2$ and constant $C$ such that $F$ can be written as $F=F_{1}+F_{2}$ where
\begin{enumerate}
    \item  $F_{1}\in C^{4}(\R)$ is a convex, nonnegative function having $k$-growth
    $$
    \frac{1}{C} |u|^k - C  \leq F_1(u) \leq C|u|^k + C,
    $$
    $$
    \frac{1}{C} |u|^{(k-2)} - C\leq F_1''(u) \leq C|u|^{(k-2)} + C
    $$
    and satisfying $|u F_{1}'(u)|\le C (F_{1}(u)+1)$, $|u F_{1}^{(3)}(u)|\le C (F_{1}''(u)+1)$, 
    \item $F_{2}\in C^{4}(\R)$ is such that $F_{2}, F_2', F_{2}'', sF_{2}^{(3)}(s) \in L^{\infty}(\R)$ are bounded on the whole line. Moreover, $\|F_2''\|_{\infty} < \frac{1}{C_P}$ where $C_P$ is a constant in Lemma \ref{lem:Poincaré_nonlocal_H1_L2}.
\end{enumerate}
We also define $s := \frac{2k}{k-1}$ and $s'$ its conjugate exponent.
\end{Ass}

\begin{Ex}\label{ex:potentials}
The following potentials satisfy Assumption \ref{ass:potentialF}.
\begin{enumerate}[label=(\arabic*)]    \item power-type potential $F(u) = |u|^{\gamma}$, $\gamma>2$ used in the context of tumor growth models \cite{Perthame-Hele-Shaw,david2021incompressible,MR4504016,DEBIEC2021204},
    \item double-well potential $F(u) = u^2\,(u-1)^2$ which is an approximation of logarithmic double-well potential often used in Cahn-Hilliard equation, see \cite[Chapter 1]{MR4001523}. 
\end{enumerate}

\end{Ex}

Before stating the main result, we define solutions of the local degenerate Cahn-Hilliard equation.
\begin{Def}
\label{def:weak_sol_local}\noindent We say that $\rho$ is a weak solution of \eqref{eq:CH1}-\eqref{eq:CH2} if
\begin{align*}
&\rho \in L^{\infty}(0,T;L^{k}(\Td))\cap L^{2}(0,T;H^{2}(\Td))\cap {L^{\infty}(0,T;H^{1}(\Td))},  \quad \p_{t} \rho \in L^{2}(0,T;W^{-1,s'}(\Td)),  \\
&\sqrt{F_{1}''(\rho)}\nabla \rho\in L^2((0,T)\times\Td),
\end{align*}
 $\rho(0,x)={\rho^0}(x)$ a.e. in $\Td$ and if for all $\varphi \in L^2(0,T; W^{2,\infty}(\Td))$ we have 
\begin{multline*}
\int_{0}^{T}\langle\p_{t}\rho,\varphi\rangle_{(W^{-1,s'}(\Td),W^{1,s}(\Td))} = -D\int_0^T \int_{\Td}\Delta \rho\, \nabla \rho \cdot \nabla\varphi 
 { \, \diff x \diff t} -D\int_0^T \int_{\Td}\rho\,\Delta \rho\,\Delta\varphi { \, \diff x \diff t} \\-\int_{0}^{T}\int_{\Td}\rho\,F''(\rho)\,\nabla \rho\cdot\nabla\varphi { \, \diff x \diff t}.     
\end{multline*}
\end{Def}

{Note that the term $\int_0^T \int_{\Td}\Delta \rho\,\nabla \rho \cdot \nabla \varphi \, \diff x \diff t$ is well-defined because, due to the assumptions on $\rho$, $\Delta \rho\,\nabla \rho \in L^{\infty}(0,T; L^1(\Td))$.}\\

The definition of dissipative measure-valued solutions to \eqref{eq:EK1}--\eqref{eq:EK2} is quite technical and will be presented in Definitions \ref{def:dissipative_measure} and \ref{def:diss_mvs}. The main theorem reads as follows. 

\begin{thm}\label{thm:conv_EK_CH_loc}
Let ${\rho^0}$ be an initial density satisfying 
$$
{\rho^0}\geq \sigma > 0, \qquad {\rho^0} \in C^{3}(\Td)
$$ 
for some $\sigma>0$. Let ${ \bu^0_{\varepsilon}}$ be an initial velocity satisfying
\begin{equation}\label{eq:thm14_velocity_to_zero}
\|{ \bu^0_{\varepsilon}}\|_{L^2(\Td)} \to 0 \mbox{ as } \varepsilon \to 0.
\end{equation}
Let $(\rho_{\eta, \varepsilon}, \overline{\sqrt{\rho_{\eta, \varepsilon}}\bu_{\eta, \varepsilon}}, \nu^{\eta, \varepsilon}, m_{\eta, \varepsilon})$ be a dissipative measure-valued solution of \eqref{eq:EK1}--\eqref{eq:EK2} with the initial condition $({\rho^0},{ \bu^0_{\varepsilon}})$ and parameters $\varepsilon, \eta$ satisfying Poincaré inequality \eqref{eq:nonloc_Poinc_stronger_bar}. Then, for each sequence $\eta_k \to 0$, there exists a subsequence $\{\eta_k\}$ (not relabelled) and a sequence $\{\varepsilon_k\}$ depending on $\eta_{k}$ and the final time $T$ such that $\varepsilon_k \to 0$ and $\rho_{\eta_k, \varepsilon_k} \to \rho$ in $L^{2}(0,T; L^2(\Td))$, where $\rho$ is a weak solution of \eqref{eq:CH1}--\eqref{eq:CH2} with initial condition ${\rho^0}$ as defined in Definition \ref{def:weak_sol_local}.
\end{thm}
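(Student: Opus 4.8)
\emph{Overall strategy.} I would not try to pass directly from \eqref{eq:EK1}--\eqref{eq:EK2} to \eqref{eq:CH1}--\eqref{eq:CH2}: the relative entropy method needs a classically regular solution of the target system, and no such theory is available for the local degenerate Cahn--Hilliard equation. Instead I would factor the limit through the \emph{nonlocal} Cahn--Hilliard equation exactly as in Figure~\ref{fig:results}, prove the two arrows separately, and then glue them by a diagonal extraction in which $\varepsilon_k$ is driven to $0$ much faster than $\eta_k$. The point that makes this possible is that no estimate uniform in $\eta$ is needed: for each frozen $\eta$ the relative entropy argument is quantitative in $\varepsilon$.

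\emph{Step 1: quantitative $\varepsilon\to 0$ limit at fixed $\eta$.} For $\eta$ fixed I would first invoke the existence and uniqueness of a classical solution $\rho_\eta$ of \eqref{eq:CHNL1}--\eqref{eq:CHNL2} with datum $\rho_0$ from Section~\ref{sect:class_sol}, which by the maximum principle stays bounded away from $0$ since $\rho_0 \geq \sigma > 0$. Then I would apply Theorem~\ref{thm:conv_EK_CH_nonloc}, and, more importantly, extract from its proof the \emph{quantitative} Gr\"onwall bound on the relative entropy $\Theta_{\eta,\varepsilon}$:
\[
\|\rho_{\eta,\varepsilon}(t)-\rho_\eta(t)\|_{L^2(\Td)}^2 \ \lesssim\ \Theta_{\eta,\varepsilon}(t)\ \leq\ \bigl(\Theta_{\eta,\varepsilon}(0)+r(\varepsilon,\eta)\bigr)\,e^{C(\eta)\,t},\qquad t\in[0,T],
\]
where for each fixed $\eta$ one has $r(\varepsilon,\eta)\to 0$ as $\varepsilon\to 0$, the well-preparedness assumption \eqref{eq:thm14_velocity_to_zero} forces $\Theta_{\eta,\varepsilon}(0)\to 0$ as $\varepsilon\to 0$, and the constant $C(\eta)$ depends only on $\eta$, $\sigma$, the potential $F$, the ($\eta$-dependent) Sobolev norms of $\rho_\eta$ and the Poincar\'e constant in \eqref{eq:nonloc_Poinc_stronger_bar} — but is \emph{independent of} $\varepsilon$. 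Consequently $\rho_{\eta,\varepsilon}\to\rho_\eta$ in $L^2(0,T;L^2(\Td))$ as $\varepsilon\to 0$, with an explicit modulus depending on $\eta$ and $T$.

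\emph{Step 2 and Step 3: $\eta\to 0$ limit and diagonalisation.} Next I would use the result of \cite{MR4574535} (recalled after Definition~\ref{def:weak_sol_local}): along a subsequence $\eta_k\to 0$, not relabelled, $\rho_{\eta_k}\to\rho$ in $L^2(0,T;L^2(\Td))$, with $\rho$ a weak solution of \eqref{eq:CH1}--\eqref{eq:CH2} with datum $\rho_0$ in the sense of Definition~\ref{def:weak_sol_local}. Then, with $\eta_k$ fixed and $C(\eta_k)$, $T$ now frozen numbers, Step~1 lets me choose $\varepsilon_k>0$, depending on $\eta_k$ and $T$, so small that $\|\rho_{\eta_k,\varepsilon_k}-\rho_{\eta_k}\|_{L^2(0,T;L^2(\Td))}\leq 1/k$ and so that the pair $(\varepsilon_k,\eta_k)$ still verifies \eqref{eq:nonloc_Poinc_stronger_bar} (shrink $\varepsilon_k$ further if necessary, since that condition only asks $\varepsilon$ small relative to $\eta$). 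Then $\varepsilon_k\to 0$ and
\[
\|\rho_{\eta_k,\varepsilon_k}-\rho\|_{L^2(0,T;L^2(\Td))}\ \leq\ \|\rho_{\eta_k,\varepsilon_k}-\rho_{\eta_k}\|_{L^2(0,T;L^2(\Td))}+\|\rho_{\eta_k}-\rho\|_{L^2(0,T;L^2(\Td))}\ \longrightarrow\ 0,
\]
which is the claimed convergence.

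\emph{Main obstacle.} The whole difficulty sits in Step~1: one has to revisit the proof of Theorem~\ref{thm:conv_EK_CH_nonloc} and check carefully that the Gr\"onwall constant $C(\eta)$ is genuinely \emph{independent of} $\varepsilon$ — its (expected) blow-up as $\eta\to 0$ is harmless because $\eta$ is frozen while $\varepsilon\to 0$ — and that both the remainder $r(\varepsilon,\eta)$ and the initial relative entropy $\Theta_{\eta,\varepsilon}(0)$ really do vanish as $\varepsilon\to 0$ for each fixed $\eta$ under \eqref{eq:thm14_velocity_to_zero}. This is exactly what the intermediate nonlocal step is designed to achieve: because the two limits are taken successively rather than jointly in a uniform way, the degeneracy $C(\eta)\to\infty$ never obstructs the argument.
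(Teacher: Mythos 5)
Your proposal is correct and follows essentially the same route as the paper: the quantitative relative entropy bound of Theorem~\ref{thm:rel_entr_estimate} (with constant blowing up only in $\eta$, not $\varepsilon$) combined with \eqref{eq:initial_entropy}, \eqref{eq:nonnegativity_rel_entropy_explicit} and \eqref{eq:rho-Rho_via_Ponce_bar} gives the fixed-$\eta$ convergence, the result of \cite{MR4574535} gives $\rho_{\eta_k}\to\rho$ along a subsequence, and a diagonal choice of $\varepsilon_k$ plus the triangle inequality concludes, exactly as in the paper. The only small inaccuracy is your parenthetical reading of \eqref{eq:nonloc_Poinc_stronger_bar} as a smallness condition of $\varepsilon$ relative to $\eta$: it is an assumed structural property of the measure-valued solution (verified for the constructed solutions in Section~\ref{sect:mv_solutions}), so no extra shrinking of $\varepsilon_k$ is needed on that account.
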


Let us briefly comment that the measure-valued solution has in fact four components. While the first component $\rho_{\eta,\eps}$ is the most important since it converges to the Cahn-Hilliard equation, we can also characterize what happens with the other ones, see Theorem \ref{thm:convergence_YM_eps_and_eta}. Roughly speaking, the second one converges to 0 in $L^{\infty}(0,T; L^2(\Td))$ which represents that in the high-friction limit, the kinetic energy converges to 0. The parametrized measure $\nu^{\eta, \eps}$ converges in the second Wasserstein metric $\mathcal{W}_2$ to the Dirac mass $\delta_{\rho(t,x)} \otimes \delta_{\bm{0}}$:
$$
\int_0^T \int_{\Td} \left[\mathcal{W}_2(\nu^{\eta_k, \eps_k}, \delta_{\rho(t,x)}\otimes \delta_{\bm{0}})\right]^2\diff x \diff t \to 0 \mbox{ as } \varepsilon_k, \eta_k \to 0
$$
while the concentration measure $m_{\eta_k, \eps_k }$ converges to 0 in the total variation norm. The estimate in the Wasserstein metric is in the spirit of \cite{MR3509212}.\\

Theorem \ref{thm:conv_EK_CH_loc} is valid only for a subsequence as the convergence from non-local Cahn-Hilliard to the local one is based on the compactness arguments (and there is no uniqueness for the limit equation). On the other hand, the passage from the nonlocal Euler-Korteweg equation to the nonlocal Cahn-Hilliard equation is based on the relative entropy method and so the convergence is satisfied for any sequence. We state this result below.

\begin{thm}\label{thm:conv_EK_CH_nonloc}
Let $\eta \in (0, \eta_0)$ where $\eta_0$ is defined in Lemma \ref{lem:Poincaré_nonlocal_H1_L2}. Let ${\rho^0}$ be an initial density satisfying 
$$
{\rho^0}\geq \sigma > 0, \qquad {\rho^0} \in C^{3}(\Td)
$$ 
for some $\sigma>0$. Let ${ \bu^0_{\varepsilon}}$ be an initial velocity satisfying
\begin{equation}\label{eq:thm14_velocity_to_zero_2}
\|{ \bu^0_{\varepsilon}}\|_{L^2(\Td)} \to 0 \mbox{ as } \varepsilon \to 0.
\end{equation}
Let $(\rho_{\eta, \varepsilon}, \overline{\sqrt{\rho_{\eta, \varepsilon}}\bu_{\eta, \varepsilon}}, \nu^{\eta, \varepsilon}, m_{\eta, \varepsilon})$ be a dissipative measure-valued solution of \eqref{eq:EK1}--\eqref{eq:EK2} with initial condition $({\rho^0},{ \bu^0_{\varepsilon}})$ and parameters $\varepsilon, \eta$ satisfying Poincaré inequality \eqref{eq:nonloc_Poinc_stronger_bar}. Let $\rho_{\eta}$ be the solution of non-local Cahn-Hilliard \eqref{eq:CHNL1}-\eqref{eq:CHNL2} with the same initial condition ${\rho^0}$. Then, $\rho_{\eta, \varepsilon} \to \rho_{\eta}$ in $L^{\infty}(0,T;L^2(\Td))$ as $\eps\to0$.
\end{thm}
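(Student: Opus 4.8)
The plan is to prove Theorem~\ref{thm:conv_EK_CH_nonloc} by the relative entropy method, comparing the dissipative measure-valued solution $(\rho_{\eta,\eps}, \overline{\sqrt{\rho_{\eta,\eps}}\bu_{\eta,\eps}}, \nu^{\eta,\eps}, m_{\eta,\eps})$ of the nonlocal Euler--Korteweg system with the smooth solution $\rho_\eta$ of the nonlocal Cahn--Hilliard equation. First I would fix $\eta$ and use the fact, proved in Section~\ref{sect:class_sol}, that $\rho_\eta$ is a classical, strictly positive solution of \eqref{eq:CHNL1}-\eqref{eq:CHNL2}; setting $\bu_\eta := -\nabla\mu_\eta = -\nabla(B_\eta[\rho_\eta] + F'(\rho_\eta))$, one checks that $(\rho_\eta, \bu_\eta)$ is, up to lower-order (friction-scale) corrections, an approximate solution of the Euler--Korteweg system in the regime $\eps\to 0$ — more precisely, $(\rho_\eta, \eps\,\bu_\eta)$ plays the role of the target profile, and the residual in the momentum equation is $O(\eps)$ in the appropriate norm because the convective and capillary terms are balanced against the friction term $-\tfrac{1}{\eps^2}\rho\bu$.

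Next I would write down the relative entropy functional
\[
\Theta(t) = \int_{\Td}\!\!\int_{\R^d\times\R^{d}} \tfrac12\,\big|\bm{\lambda} - \bu_\eta\big|^2 \,\diff\nu^{\eta,\eps}_{t,x}(\rho,\bm{\lambda}) \,\diff x + \mathcal{E}_{\mathrm{rel}}(\rho_{\eta,\eps}\,|\,\rho_\eta)(t),
\]
where the kinetic part uses the Young measure (and must absorb the concentration measure $m_{\eta,\eps}$), and $\mathcal{E}_{\mathrm{rel}}$ is the Bregman-type relative energy built from the convex functional $\int F(\rho) + \tfrac{1}{4\eta^2}\iint \omega_\eta(x-y)|\rho(x)-\rho(y)|^2$ associated to $F$ and $B_\eta$. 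The strategy is then standard: differentiate $\Theta(t)$ in time using the measure-valued formulation for $\rho_{\eta,\eps}$ and the classical equation for $\rho_\eta$, and organize the result as
\[
\Theta(t) \le \Theta(0) + \int_0^t \!\Big( \text{dissipation} \le -\tfrac{c}{\eps^2}\,\|\sqrt{\rho}(\bm{\lambda}-\bu_\eta)\|^2 \Big) + \int_0^t C\,\Theta(s)\,\diff s + (\text{residual terms} \to 0).
\]
The crucial algebraic cancellation is that the strong friction term produces a negative contribution of order $\eps^{-2}$ on the relative kinetic energy, which dominates the $\eps^{-1}$-scaled cross terms coming from the convective and pressure fluxes; what survives after Cauchy--Schwarz and Young's inequality is a Grönwall-type estimate with an $O(\eps)$ forcing from the residual plus the well-prepared initial data contribution $\Theta(0)\to 0$ from \eqref{eq:thm14_velocity_to_zero_2}. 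Here the convexity and growth assumptions on $F=F_1+F_2$ (Assumption~\ref{ass:potentialF}) and the nonlocal Poincaré inequality (Lemma~\ref{lem:Poincaré_nonlocal_H1_L2}, via $\|F_2''\|_\infty < 1/C_P$) are used to bound the relative energy terms involving $F_2$ and to get coercivity of $\mathcal{E}_{\mathrm{rel}}$, controlling $\|\rho_{\eta,\eps}-\rho_\eta\|_{L^2}^2$ from above by $\Theta$.

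Finally, Grönwall's lemma gives $\Theta(t) \le \big(\Theta(0) + C(T,\eta)\,\eps\big)\,e^{C(T,\eta) t}$, so $\Theta(t)\to 0$ uniformly on $[0,T]$ as $\eps\to 0$, and by coercivity of the relative energy this yields $\rho_{\eta,\eps}\to\rho_\eta$ in $L^\infty(0,T;L^2(\Td))$. The main obstacle I anticipate is bookkeeping the concentration measure $m_{\eta,\eps}$ and the Young-measure representation correctly: one must verify that the ``flux'' terms in the relative entropy inequality that involve the barred (weak-limit) quantities can be written via $\nu^{\eta,\eps}$ and $m_{\eta,\eps}$ in such a way that the $\eps^{-2}$ friction dissipation still controls them — in particular that the concentration defect appears with a favorable sign or is dominated by the dissipation — and that the residual terms genuinely vanish with $\eps$ for $\eta$ fixed, with all constants made explicit enough in their $\eta$-dependence that the joint limit in Theorem~\ref{thm:conv_EK_CH_loc} can later be taken along a suitable sequence $\eps_k = \eps_k(\eta_k, T)$.
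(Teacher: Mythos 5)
Your proposal follows essentially the same route as the paper: the paper rewrites the nonlocal Cahn--Hilliard solution as an approximate Euler--Korteweg solution with velocity $\Bu=-\eps\nabla(F'(\rho_\eta)+B_\eta[\rho_\eta])$ and $O(\eps)$ residual, proves exactly the Gr\"onwall-type relative entropy estimate you describe (Theorem \ref{thm:rel_entr_estimate}, with the friction dissipation absorbing the $\eps^{-1}$ flux terms and the concentration measures controlled via \eqref{eq:inequality_2_conc_measures}--\eqref{eq:inequality_2_conc_measures_func_p_F}), and concludes from well-prepared data and the coercivity of Lemma \ref{lem:rel_entr_nonneg} based on the nonlocal Poincar\'e inequality and $\|F_2''\|_\infty<1/C_P$. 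Only minor details differ: the kinetic part of $\Theta$ must be the density-weighted quantity $\tfrac12\overline{\rho|\bu-\Bu|^2}$ (in the variables $(\lambda_1,\lambda')$ with $\lambda'$ representing $\sqrt{\rho}\,\bu$, and with $\Bu=\eps\bu_\eta$, not $\bu_\eta$), and the residual forcing in the paper comes out as $O(\eps^4)$ rather than $O(\eps)$, which does not affect the conclusion for fixed $\eta$.
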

Similarly as for Theorem \ref{thm:conv_EK_CH_loc}, we can prove convergence of the other components of the measure-valued solution $\overline{\sqrt{\rho_{\eta, \varepsilon}}\bu_{\eta, \varepsilon}}$, $\nu^{\eta, \varepsilon}$, $m_{\eta, \varepsilon}$, see Theorem \ref{thm:convergence_YM_only_eps}.

\section{Physical relevancy of the system}

\paragraph{\underline{The Euler--Korteweg equation}}
The compressible Euler--Korteweg equation models the motion of liquid-vapor mixtures with possible phase transitions. It combines the classical Euler equation with Korteweg tensor introduced in \cite{kortewlg1901forme}. The equation reads
\begin{equation}\label{eq:E-K-equaton_general}
\begin{split}
&\p_{t}\rho+\DIV(\rho\bu)=0,\\
&\p_{t}(\rho\bu)+\DIV(\rho\bu\otimes\bu)+\nabla(p(\rho))=-\zeta\rho\bu+\rho\nabla(K(\rho)\Delta\rho+\f{1}{2}K'(\rho)|\nabla\rho|^{2}).
\end{split}
\end{equation}
Here, $\rho$ is the density of the fluid, $\bu$ is its velocity, $K(\rho)$ corresponds to the capillary coefficient, $\zeta$ is the friction coefficient and $p$ is the pressure function. In a liquid-vapor system, the tensor $K$ takes into account that the liquid and vapour are separated by a thin layer of finite thickness and describes the capillary effects in this transition zone. There are numerous mathematical results concerning well-(and ill-)posedness of solutions to \eqref{eq:E-K-equaton_general}, see \cite{MR3341206, MR3613503,MR3961293,MR2481754,MR2354691,MR1978317}. For instance, for some particular choice of $K(\rho)$, an approach to prove existence of global solutions is to relate the Euler-Korteweg and the Schrödinger equation through the Madelung transform~\cite{MR2979973,MR2481754}. For more general cases, only local existence~\cite{MR2354691} and global existence for small irrotational data~\cite{MR3613503} is known. For the physical background of \eqref{eq:E-K-equaton_general} (in particular, the form of the Korteweg tensor) we refer to \cite{MR775366,MR1836527,MR2760987} but it is a fairly complicated matter.  \\

The viscous version of \eqref{eq:E-K-equaton_general}, that is the Navier-Stokes-Korteweg system, was also studied in the mathematical literature \cite{MR4412067,MR3433629}. In particular, several papers are concerned with the case of the nonlocal equation, where $-\Delta \rho$ is approximated by the nonlocal operator $B_{\eta}$. In~\cite{MR2184845}, the author proves the short-time well-posedness while in~\cite{MR3008260}, the global well-posedness as well as the convergence of the nonlocal Navier-Stokes-Korteweg to the local one is established. We also refer to~\cite{MR3583169} for a variant of this system. \\

\paragraph{\underline{The high-friction limit}} The high-friction limit (also referred to in the literature as the relaxation limit) is a part of a long research program of establishing a connection between nonlinear hyperbolic systems and degenerate diffusion equations. One of the first results in this direction \cite{MR1042662} states that the solutions to the compressible Euler equations in one dimension
\begin{equation}\label{eq:1d_euler}
\begin{split}
&\p_{t} \rho + \p_{x}(\rho\, u) = 0,\\
& \varepsilon^2 \p_t (\rho \, u ) + \p_x(\varepsilon^2 \, \rho \, u^2 + p(\rho) ) = - u
\end{split}
\end{equation}
converge, as $\varepsilon \to 0$, to the porous media equation
$$
\partial_t \rho =  \partial_x\left({\rho}\, \partial_x p(\rho) \right)
$$
where $p(\rho)$ is the pressure function of the form $p(\rho) = \rho^{\gamma}$. To connect \eqref{eq:1d_euler} with our system \eqref{eq:EK1}--\eqref{eq:EK2}, it is sufficient to rescale $\widetilde{u} = \varepsilon \, u$ so that we have
\begin{equation}\label{eq:1D_euler_rescaled}
\begin{split}
&\p_{t} \rho + \frac{1}{\varepsilon} \p_{x}(\rho\, \widetilde{u}) = 0,\\
& \p_t (\rho \, \widetilde{u}) + \frac{1}{\varepsilon}\, \p_x( \, \rho \, {\widetilde{u}}^2 + p(\rho) ) = - \frac{\widetilde{u}}{\varepsilon^2}.
\end{split}
\end{equation}
Intuitively, it is easy to understand from \eqref{eq:1D_euler_rescaled} that the flow of the fluid with big damping or friction (caused by the term $-\frac{\widetilde{u}}{\varepsilon^2}$) and very small kinetic energy (caused by the initial condition) resembles a flow through a porous media. Several other limit passages have been studied between porous medium equation and hyperbolic equations \cite{MR870025, MR917603, MR898047,lemarie2023parabolic,MR4612415,MR4548999}. The revival of interest in this type of problem appeared recently with an observation that one can study these problems by the relative entropy method \cite{MR3056757, MR3594360, MR4149680, MR3615546,gallenmuller2023cahn}.  \\  

In our case, we consider \eqref{eq:E-K-equaton_general} with $K(\rho)=1$, large friction coefficient $\zeta=\f{1}{\eps}$, we approximate the Laplace operator $-\Delta$ by the nonlocal operator $B_{\eta}$ with $\eta$ small enough, and we perform a rescaling in time $t\to\f{t}{\eps}$. Then, we let both $\eps, \eta \to 0$, and in the limit, we obtain the Cahn-Hilliard equation. Again, it is intuitive that due to the very large damping and small kinetic energy, we observe mostly a phase separation process. The latter is described by the Cahn-Hilliard equation so it is not surprising that it is the limiting PDE. \\

\paragraph{\underline{The Cahn-Hilliard equation}}

In their publications \cite{CAHN1961795} and \cite{Cahn-Hilliard-1958} J.W. Cahn and J. E. Hilliard proposed the equation in 1958. It represents now a commonly used mathematical model for describing phase changes in fluids, although the equation was primarily developed in material sciences to explore phase separation processes under isotropy and constant temperature circumstances.\\

Being of fourth-order, the (local) Cahn-Hilliard equation is often rewritten in a system of two second-order equations, \ie
\begin{equation}
    \p_t \rho = \DIV\left(m(\rho)\nabla \left(F^\prime(\rho)-D \Delta \rho \right)\right) \to \begin{cases}
        \p_t \rho &= \DIV\left(m(\rho)\nabla \mu \right),\\
        \mu &= -D \Delta \rho +F^\prime(\rho),
    \end{cases}
    \label{eq:Cahn-Hilliard}
\end{equation}
where $\rho$ is the concentration of a phase and $\mu$ is called the chemical potential in material sciences but is often used as an effective pressure. The interaction potential $F(\rho)$ contained in this effective pressure term comprises the effects of attraction and repulsion between particles. Finally, the Laplace operator takes into account surface tension effects.\\ 

The existence and uniqueness of solutions for the Cahn-Hilliard system \eqref{eq:Cahn-Hilliard} strictly depends on the properties of the mobility term $m(\rho)$ and the potential $F(\rho)$, as well as the conditions assigned on the boundary. More specifically, the presence of \textit{degeneration} on the mobility, i.e. the possibility for it to vanish, can turn the analysis of solutions into a rather complex problem. We refer to~\cite{MR1377481} for the first existence result of weak solutions in the case of degenerate mobility and to~\cite{MR3448925} for some improvements. The uniqueness and the existence of classical solutions are open questions for this type of mobility. Since we use a relative entropy argument between the Euler-Korteweg equation and the Cahn-Hilliard equation, the existence of a classical solution of the latter is a crucial point. In fact, we need an $L^{\infty}$ bound on the second derivative of $\rho$, and that the solution remains positive for all times in a finite time interval. Since the Cahn-Hilliard equation does not satisfy the maximum principle, it is impossible to get these estimates. For that purpose, we introduce the nonlocal Cahn-Hilliard equation, which is a second-order equation with a nonlocal smooth advection term. With classical arguments, we are able to prove that the latter admits a unique positive classical solution. Let us finally remark that the nonlocal Cahn-Hilliard equation is nowadays a topic of intense research activity, see for instance \cite{Poiatti2022,MR4198717,MR4241616,MR3688414,carrillo2023degenerate}.\\

\paragraph{\underline{Mobilities}}
This work focuses on the mobility case where $m(\rho)=\rho$, which is a result of deriving the Cahn-Hilliard equation from fluid models. This mobility is also obtained from Vlasov equation via hydrodynamic limit \cite{elbar-mason-perthame-skrzeczkowski} and is also observed in the nonlocal Cahn-Hilliard equation, which can be derived from systems of interacting particles as an aggregation-diffusion equation (see~\cite{MR3913840,burger2022porous,carrillo2023nonlocal}). Furthermore, it would be of interest to investigate whether this work can be extended to the mobility case of $m(\rho)=\rho\,(1-\rho)$, as studied in the original works of Giacomin-Lebowitz \cite{MR1453735,MR1638739}. They developed a model based on a $d$-dimensional lattice gas that evolves through Kawasaki exchange dynamics, which is a Poisson process that exchanges nearest neighbors. In the hydrodynamic limit, they observed that the average occupation numbers over a small macroscopic volume element tends towards a solution of a non-local Cahn-Hilliard equation with mobility $m(\rho)=\rho(1-\rho)$.

\section{Generalised Young Measures}\label{sec:Young}

We introduce the framework of Young measures to define the solutions of the nonlocal Euler-Korteweg equation. This framework is necessary since in the usual approximation schemes we  cannot pass to the limit in the {terms} of type $f(z_{j})$ where $f$ is nonlinear and $\{z_{j}\}$ is only a {weak star} convergent sequence. The idea of Young measures is to embed the problem in a larger space and gain linearity. We write $f(z_{j}(y))=\langle f,\delta_{z_{j}(y)}\rangle$, and if $f\in C_{0}(\R^n)$, using the duality $(L^{1}(Q;C_{0}(\R^n)))^{*}=L^{\infty}_{w}(Q;\mathcal{M}(\R^n))$, Banach-Alaoglu theorem and weak-star continuity of linear operators, we can pass to the limit. The same is true if $\{f(z_j)\}$ is weakly compact in $L^1(Q)$ and $\{z_j\}$ does not grow too fast as the following theorem states:
\begin{thm}[Fundamental Theorem of Young Measures]\label{thm:fund_theorem_YM}
Let $Q\subset\R^{d}$ be a measurable set and let $z_{j}:Q\to \R^{n}$ be measurable functions such that 
\begin{equation*}
\sup_{j\in\mathbb{N}}\int_{Q}g(|z_{j}(y)|)\diff y<+\infty   
\end{equation*}
for some continuous, nondecreasing function $g:[0,+\infty)\to [0,+\infty)$ with $\lim_{t\to+\infty}g(t)=+\infty$. Then, there exists a subsequence (not relabeled) and a {weak star} measurable family of probability measures $\nu= \{\nu_{y}\}_{y\in Q}$ with the property that whenever the sequence $\{\psi(y, z_{j} (y))\}_{j\in\mathbb{N}}$ is weakly compact in $L^{1}(Q)$ for a Carathéodory function (measurable in the first and continuous in the second argument) $\psi: Q \times \R^{n}\to \R$, we have
\begin{equation*}
\psi(y,z_{j}(y))\rightharpoonup\int_{\R^{n}}\psi(y,\lambda) \diff\nu_{y}(\lambda)\quad \text{in $L^{1}(Q)$}.     
\end{equation*}
We say that the sequence $\{z_{j}\}_{j\in\mathbb{N}}$ generates the sequence of Young measures $\{\nu_{y}\}_{y\in Q}$.
\end{thm}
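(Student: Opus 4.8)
The plan is to follow the classical construction of Young measures due to Ball and Pedregal: first realise the Dirac masses $\delta_{z_j(y)}$ as a bounded sequence in a dual Banach space and extract a weak-$*$ limit $\nu=\{\nu_y\}$ \emph{depending only on $(z_j)$}, then use the coercivity hypothesis on $g$ to see that each $\nu_y$ is a probability measure on $\R^n$, and finally propagate the weak-$L^1$ convergence from elementary integrands to arbitrary Carathéodory ones. It is harmless to assume $|Q|<\infty$ (otherwise exhaust $Q=\bigcup_k Q_k$ with $|Q_k|<\infty$, run the argument on each $Q_k$, and diagonalise in $k$). Fix a metrisable compactification $\overline{\R^n}$ of $\R^n$ (the one-point compactification works), so that $C(\overline{\R^n})$ is separable and $L^1(Q;C(\overline{\R^n}))$ is a separable Banach space with dual $L^\infty_w(Q;\M(\overline{\R^n}))$. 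Since $\nu_j^y:=\delta_{z_j(y)}$ satisfies $\|\nu_j^y\|_{\M}=1$ for a.e.\ $y$, the sequence $(\nu_j)$ is bounded in this dual; Banach--Alaoglu and separability give a non-relabelled subsequence with $\nu_j\wstar\nu$ for a weakly-$*$ measurable family $\nu=\{\nu_y\}$ of nonnegative measures, and testing against $\mathbf 1_Q\otimes\mathbf 1$ (together with lower semicontinuity of the mass) shows $\nu_y(\overline{\R^n})=1$ a.e.

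The function $g$ is used only to gain tightness. Chebyshev's inequality and the hypothesis give, uniformly in $j$,
\[
\bigl|\{\,y\in Q:\ |z_j(y)|\ge R\,\}\bigr|\ \le\ \frac1{g(R)}\int_Q g(|z_j(y)|)\,\diff y\ \le\ \frac{C}{g(R)}\ \xrightarrow[R\to\infty]{}\ 0 .
\]
Testing the weak-$*$ convergence against $\mathbf 1_Q\otimes\chi_R$, where $\chi_R\in C(\overline{\R^n};[0,1])$ equals $1$ on $\{|\lambda|\ge R\}\cup(\overline{\R^n}\setminus\R^n)$, we obtain $\int_Q\nu_y(\overline{\R^n}\setminus\R^n)\,\diff y\le C/g(R)$ for every $R$, hence $\nu_y\in\mathcal P(\R^n)$ for a.e.\ $y$. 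The uniform smallness of $|\{|z_j|\ge R\}|$ is kept as a tightness tool.

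Next I would establish the representation in increasing generality. For $\phi\in L^\infty(Q)$ and $h\in C_0(\R^n)$ (extended by $0$ to $\overline{\R^n}$, hence in $C(\overline{\R^n})$), weak-$*$ convergence gives $\int_Q\phi\,h(z_j(\cdot))\to\int_Q\phi\,\langle\nu_\cdot,h\rangle$, i.e.\ $h(z_j(\cdot))\weak\overline h(\cdot):=\langle\nu_\cdot,h\rangle$ in $L^1(Q)$ (using boundedness of $h\circ z_j$ and $|Q|<\infty$). Approximating a bounded Carathéodory integrand whose $\lambda$-support lies in a fixed ball, in the space $L^1(Q;C_0(\R^n))$, by simple functions $\sum_i\mathbf 1_{A_i}(y)\,h_i(\lambda)$ with $h_i\in C_0(\R^n)$, and summing, extends this to all such integrands; and for a general bounded Carathéodory $\psi$ one splits $\psi=\psi\zeta_R+\psi(1-\zeta_R)$ with $\zeta_R\in C_c(\R^n)$, $\zeta_R\equiv1$ on $B_R$: the first term falls under the previous case, while the second has $L^1(Q)$-norm at most $\|\psi\|_\infty|\{|z_j|\ge R\}|\le\|\psi\|_\infty C/g(R)\to0$ uniformly in $j$, so an $\varepsilon/3$ argument yields $\psi(\cdot,z_j(\cdot))\weak\overline\psi(\cdot):=\int_{\R^n}\psi(\cdot,\lambda)\,\diff\nu_\cdot(\lambda)$ in $L^1(Q)$ for every bounded Carathéodory $\psi$.

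Removing boundedness is the step I expect to be the main obstacle. Let $\psi$ be Carathéodory with $\{\psi(\cdot,z_j(\cdot))\}_j$ weakly compact in $L^1(Q)$; by the Dunford--Pettis theorem this family is bounded in $L^1(Q)$ and equi-integrable (and, in the infinite-measure reduction, exhibits no escape of mass to spatial infinity). For the truncation $\psi_M:=T_M\circ\psi$, with $T_M(r):=\max(-M,\min(M,r))$, equi-integrability gives $\|\psi(\cdot,z_j(\cdot))-\psi_M(\cdot,z_j(\cdot))\|_{L^1(Q)}\to0$ as $M\to\infty$ uniformly in $j$; the previous step applies to each bounded $\psi_M$; and $\overline{\psi_M}\to\overline\psi$ in $L^1(Q)$ by dominated convergence, since $|\overline\psi|\le\overline{|\psi|}\in L^1(Q)$, the latter membership following by applying the previous step to $\min(|\psi|,M)$ and monotone convergence. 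An $\varepsilon/3$ argument combining these three facts yields $\psi(\cdot,z_j(\cdot))\weak\overline\psi$ in $L^1(Q)$, which is the assertion of the theorem. The delicate point is precisely to interlock the three approximation mechanisms — truncation in the \emph{value} of $\psi$ (Dunford--Pettis), truncation in the variable $\lambda$ (the tightness from $g$), and reduction to product integrands (density of simple functions in $L^1(Q;C_0(\R^n))$) — while the subsequence and the family $\nu$ remain fixed once and for all, since the Young measure generated by a given subsequence need not be unique.
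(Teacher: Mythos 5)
The paper does not prove this statement at all: Theorem \ref{thm:fund_theorem_YM} is quoted as the classical fundamental theorem of Young measures (Ball's theorem), so there is no in-paper argument to compare yours against. Judged on its own, your outline is the standard and correct construction: realise $\delta_{z_j(y)}$ as a bounded sequence in $L^\infty_w(Q;\mathcal{M}(\overline{\R^n}))=(L^1(Q;C(\overline{\R^n})))^*$, extract one weak-$*$ limit $\nu$ once and for all, use the Chebyshev bound $|\{|z_j|\ge R\}|\le C/g(R)$ to rule out mass at infinity so that $\nu_y\in\mathcal P(\R^n)$, pass from product integrands to bounded Carath\'eodory ones by density of simple functions in $L^1(Q;C_0(\R^n))$, and remove the boundedness of $\psi$ via Dunford--Pettis equi-integrability together with truncation $T_M\circ\psi$ and dominated convergence for $\overline{\psi_M}\to\overline\psi$; the order of the three approximations and the fixed subsequence are handled correctly. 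Two small points you should make explicit if you write this out in full: (i) in the cutoff step $\psi=\psi\zeta_R+\psi(1-\zeta_R)$ you must also control the limit-side tail $\int_Q\langle\nu_y,|\psi|(1-\zeta_R)\rangle\diff y$, which follows from the same $\chi_R$-test you used for the point at infinity, giving the quantitative bound $\int_Q\nu_y(\{|\lambda|\ge R\})\diff y\le C/g(R-1)$; and (ii) the density argument requires that $y\mapsto\psi(y,\cdot)\zeta_R$ be strongly measurable with values in $C(\mathrm{supp}\,\zeta_R)$, which holds for Carath\'eodory integrands by separability of that space and the Pettis measurability theorem. Neither is a gap in the idea, only a detail to record.
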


Note that in the above theorem, we require that the sequence $\{\psi(y,z_{j}(y))\}$ is weakly compact in $L^{1}(Q)$. This prevents the concentration effect to appear (think about the family of standard mollifiers). When we do not have weak compactness, we use the following proposition {which follows from the Banach-Alaoglu theorem and the Radon-Nikodym theorem, see~\cite{MR2805464}}. We formulate it with a distinguishment between time and space variables (that is, $Q= (0,T)\times \Omega$, $y=(t,x)$ with $t\in(0,T)$ and $x \in \Omega$) as usually in applications one has better integrability in time which results in better characterization of the resulting measure. 
\begin{proposition}\label{prop:gen_YM}
Let $f$ be a continuous function and a sequence $\{f(t,x,z_{j}(t,x))\}_{j\in\mathbb{N}}$ be bounded in $L^{p}(0,T;L^{1}(\Omega))$ with $p \geq 1$. Let $\{\nu_{t,x}\}_{t,x}$ be the Young measure generated by $\{z_{j}\}_{j}$. Then there exists a measure $m^{f}$ such that (up to a subsequence not relabelled)
\begin{equation*}
f(t,x,z_{j}(t,x))-\langle\nu_{t,x}, f\rangle \overset{\ast}{\rightharpoonup} m^{f}\quad \text{in $L^{p}(0,T;\mathcal{M}(\Omega))$ if $p>1$},   
\end{equation*}
\begin{equation*}
f(t,x,z_{j}(t,x))-\langle\nu_{t,x}, f\rangle \overset{\ast}{\rightharpoonup} m^{f}\quad \text{in $\mathcal{M}((0,T) \times \Omega)$ if $p=1$}. 
\end{equation*}
Moreover, if $p > 1$, the measure $m^f$ is absolutely continuous with respect to time: for a.e. $t \in (0,T)$, there exists measure $m^f(t,\cdot)$ such that
$$
\int_{(0,T)\times \Omega} \psi(t,x) \diff m^f(t,x) = \int_{0}^T \int_{\Omega} \psi(t,x) \, m^f(t,\diff x) \diff t. 
$$
\end{proposition}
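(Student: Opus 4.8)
The plan is a soft-analysis argument: extract a weak-* limit of $\{f(t,x,z_{j})\}$ via the Banach--Alaoglu theorem, and then split this limit into the part already carried by the Young measure $\nu$ and a leftover concentration measure, using the Fundamental Theorem of Young Measures (Theorem~\ref{thm:fund_theorem_YM}) applied to \emph{truncations} of $f$. Write $Q=(0,T)\times\Omega$ and recall that $Q$ has finite Lebesgue measure. First I would produce the limit measure. If $p=1$, then $\{f(t,x,z_{j})\}$ is bounded in $L^{1}(Q)\subset\mathcal{M}(Q)=(C_{0}(Q))^{*}$, so along a subsequence (not relabelled, and still generating $\nu$) one has $f(t,x,z_{j})\weaks\overline{f}$ in $\mathcal{M}(Q)$. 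If $p>1$, I would identify $L^{p}(0,T;\mathcal{M}(\Omega))$, understood in the weak-* measurable sense, with the dual of the separable space $L^{p'}(0,T;C_{0}(\Omega))$; since $t\mapsto\|f(t,\cdot,z_{j})\|_{\mathcal{M}(\Omega)}=\|f(t,\cdot,z_{j})\|_{L^{1}(\Omega)}$ is bounded in $L^{p}(0,T)$, Banach--Alaoglu again yields a subsequence and $\overline{f}\in L^{p}(0,T;\mathcal{M}(\Omega))$ with $f(t,x,z_{j})\weaks\overline{f}$.

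Next I would check that the barycenter $\langle\nu_{t,x},f\rangle$ is integrable, so that it defines a measure absolutely continuous with respect to $\diff t\,\diff x$. For $M>0$ set $\psi_{M}(t,x,\lambda):=\min(|f(t,x,\lambda)|,M)$, a bounded Carathéodory function. Then $\{\psi_{M}(t,x,z_{j})\}_{j}$ is bounded in $L^{\infty}(Q)$, hence equi-integrable on the finite-measure set $Q$ and therefore weakly compact in $L^{1}(Q)$; Theorem~\ref{thm:fund_theorem_YM} gives $\psi_{M}(t,x,z_{j})\weak\langle\nu_{t,x},\psi_{M}(t,x,\cdot)\rangle$ in $L^{1}(Q)$. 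Testing against the constant $1$, using $\psi_{M}\le|f(t,x,z_{j})|$ pointwise and Hölder in time,
\begin{align*}
\int_{Q}\langle\nu_{t,x},\min(|f|,M)\rangle\,\diff t\,\diff x &=\lim_{j}\int_{Q}\psi_{M}(t,x,z_{j})\,\diff t\,\diff x\le\sup_{j}\int_{Q}|f(t,x,z_{j})|\,\diff t\,\diff x\\
&\le T^{1/p'}\sup_{j}\|f(\cdot,\cdot,z_{j})\|_{L^{p}(0,T;L^{1}(\Omega))}=:C,
\end{align*}
with $C$ independent of $M$. Letting $M\to\infty$, monotone convergence yields $\langle\nu_{t,x},|f|\rangle\in L^{1}(Q)$; in particular $\langle\nu_{t,x},f\rangle$ is finite for a.e.\ $(t,x)$ and belongs to $L^{1}(Q)$.

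Then I would set $m^{f}:=\overline{f}-\langle\nu_{t,x},f\rangle\,\diff t\,\diff x$, a difference of finite Radon measures and hence a finite signed Radon measure on $Q$. Since $\langle\nu_{t,x},f\rangle$ does not depend on $j$, subtracting it from $f(t,x,z_{j})$ and passing to the weak-* limit gives exactly $f(t,x,z_{j})-\langle\nu_{t,x},f\rangle\weaks m^{f}$, in $\mathcal{M}(Q)$ if $p=1$ and in $L^{p}(0,T;\mathcal{M}(\Omega))$ if $p>1$, which is the assertion. For the final statement ($p>1$) I would show that $\overline{f}$ charges no time-slice: for measurable $A\subset(0,T)$ with $|A|=0$, outer regularity together with lower semicontinuity of the total variation under weak-* convergence gives $|\overline{f}|(A\times\Omega)\le\liminf_{j}\int_{A}\|f(t,\cdot,z_{j})\|_{L^{1}(\Omega)}\,\diff t=0$, so the pushforward of $|\overline{f}|$ under $(t,x)\mapsto t$ is absolutely continuous with respect to Lebesgue measure. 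The disintegration theorem then produces an a.e.-defined family $\overline{f}(t,\cdot)\in\mathcal{M}(\Omega)$ with $\overline{f}=\int_{0}^{T}\overline{f}(t,\cdot)\,\diff t$, and combining with the obvious disintegration of $\langle\nu_{t,x},f\rangle\,\diff t\,\diff x$ produces the claimed Fubini-type identity for $m^{f}$.

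The only genuinely delicate point is the integrability of $\langle\nu_{t,x},f\rangle$: since $f$ need not be bounded nor vanish at infinity, Theorem~\ref{thm:fund_theorem_YM} cannot be applied to $f$ directly, and one must pass through the truncations $\psi_{M}$ and the lower-semicontinuity inequality above --- this is precisely where the uniform $L^{p}(0,T;L^{1}(\Omega))$ bound enters. Everything else is a routine combination of Banach--Alaoglu with a standard disintegration argument, essentially as in \cite{MR2805464}.
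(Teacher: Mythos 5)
Your strategy is exactly the route the paper itself points to (the paper gives no proof of this proposition, only the remark that it follows from Banach--Alaoglu and Radon--Nikodym, see \cite{MR2805464}): extract $\overline{f}$ by weak-* compactness, prove integrability of the barycenter $\langle\nu_{t,x},f\rangle$ by truncation and the Fundamental Theorem, set $m^{f}=\overline{f}-\langle\nu_{t,x},f\rangle\diff t\diff x$, and disintegrate in time. The $p=1$ case and the construction of $\overline{f}$ are fine, and the truncation idea is the right way to handle the fact that $f$ is neither bounded nor in $C_0$.

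There is, however, a concrete gap in the $p>1$ case. You only establish $\langle\nu_{t,x},|f|\rangle\in L^{1}((0,T)\times\Omega)$ (testing the truncations against the constant $1$ and using H\"older in time), but you then subtract $\langle\nu_{t,x},f\rangle\diff x$ from $f(\cdot,\cdot,z_{j})$ and assert weak-* convergence of the difference in $L^{p}(0,T;\mathcal{M}(\Omega))$. For that assertion (and for $m^{f}\in L^{p}(0,T;\mathcal{M}(\Omega))$, as the proposition states) to make sense, the barycenter must itself belong to this space, i.e. $t\mapsto\|\langle\nu_{t,\cdot},f\rangle\|_{L^{1}(\Omega)}$ must lie in $L^{p}(0,T)$; with only $L^{1}(Q)$ integrability the pairing with a test function $\varphi\in L^{p'}(0,T;C_{0}(\Omega))$, which need not be bounded in $t$, may be undefined. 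The fix is your own device run with time-dependent tests: for $0\le\varphi\in L^{p'}(0,T)$ one has $\int_{Q}\varphi(t)\,\langle\nu_{t,x},\min(|f|,M)\rangle\diff x\diff t\le\liminf_{j}\int_{0}^{T}\varphi(t)\,\|f(t,\cdot,z_{j})\|_{L^{1}(\Omega)}\diff t\le\|\varphi\|_{L^{p'}(0,T)}\sup_{j}\|f(\cdot,\cdot,z_{j})\|_{L^{p}(0,T;L^{1}(\Omega))}$, and $L^{p}$--$L^{p'}$ duality plus monotone convergence in $M$ give $\langle\nu_{t,x},f\rangle\in L^{p}(0,T;L^{1}(\Omega))$. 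A smaller inaccuracy: the inequality $|\overline{f}|(A\times\Omega)\le\liminf_{j}\int_{A}\|f(t,\cdot,z_{j})\|_{L^{1}(\Omega)}\diff t$ for a null set $A$ is not what lower semicontinuity of the total variation gives; you must pass to open supersets $U\supset A$ and use H\"older, $\int_{U}\|f(t,\cdot,z_{j})\|_{L^{1}(\Omega)}\diff t\le C|U|^{1/p'}$, which is uniform in $j$ precisely because $p>1$ --- or simply observe that once $\overline{f}$ is constructed in the dual of $L^{p'}(0,T;C_{0}(\Omega))$ it is by definition a weak-* measurable family $t\mapsto\overline{f}(t,\cdot)$, so the Fubini-type identity is immediate and the null-set argument is not needed at all.
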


Let us remark that by the fundamental theorem, we have $m^{f}=0$ when the sequence $\{f(z_{j})\}_{j\in\mathbb{N}}$ is weakly compact in $L^1((0,T)\times \Omega)$. We use the notation:
\begin{equation}\label{eq:notation_bar}
\overline{f}=\langle f(\lambda),\nu_{t,x}\rangle + m^{f}   
\end{equation}
to represent weak limit of $f(t,x,z_j(t,x))$. We also need the following result which allows comparing two concentration measures $m^{f_1}$ and $m^{f_2}$ for two different nonlinearities $f_1$, $f_2$. For the proof, we refer to \cite[Lemma 2.1]{MR3567640}.
\begin{proposition}\label{prop:comparison_measure}
Let $\{\nu_{t,x}\}_{(t,x)\in(0,T)\times\Omega}$ be a Young measure generated by a sequence $\{z_{j}\}_{j\in\mathbb{N}}$. If two continuous functions $f_{1}:(0,T)\times\Omega \to \R^d$ and $f_{2}:(0,T)\times\Omega \to \R^+$ satisfy $|f_{1}(z)|\le f_{2}(z)$ for every $z$, and if $\{f_{2}(z_{j})\}$ is uniformly bounded in $L^{1}((0,T)\times \Omega)$, then we have
\begin{equation*}
|m^{f_{1}}(A)|\le m^{f_{2}}(A),  
\end{equation*}
for any {B}orel set $A\subset (0,T)\times \Omega$. 
\end{proposition}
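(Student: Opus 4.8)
The plan is to reduce the vector inequality $|m^{f_1}(A)|\le m^{f_2}(A)$ to a one‑parameter family of scalar comparisons, one for each direction $e\in\R^d$ with $|e|=1$, each of which simply says that a concentration measure attached to a \emph{nonnegative} nonlinearity is itself nonnegative; the latter is precisely the kind of statement delivered by the lower semicontinuity built into the Fundamental Theorem of Young Measures.

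First I would fix $e\in\R^d$ with $|e|=1$ and set $g_e:=f_2-f_1\cdot e$, which is continuous and, since $|f_1|\le f_2$, nonnegative, with $0\le g_e(z_j)\le 2f_2(z_j)$. Hence $\{f_1(z_j)\}$, $\{f_2(z_j)\}$ and $\{g_e(z_j)\}$ are all bounded in $L^1((0,T)\times\Omega)$, so Proposition~\ref{prop:gen_YM} applies (with $p=1$) along one common subsequence and produces $m^{f_1}$, $m^{f_2}$, $m^{g_e}$; by uniqueness of weak-$*$ limits and linearity of $f\mapsto f(z_j)-\langle\nu_{t,x},f\rangle$ one gets $m^{g_e}=m^{f_2}-m^{f_1}\cdot e$. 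The claim then follows once we show $m^{g_e}\ge 0$ as a measure for every such $e$: this gives $m^{f_2}(A)\ge m^{f_1}(A)\cdot e$ for every Borel $A$, and applying it with $-e$ as well and then choosing $e=m^{f_1}(A)/|m^{f_1}(A)|$ (when the latter is nonzero) yields $m^{f_2}(A)\ge|m^{f_1}(A)|$.

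To prove $m^{g_e}\ge 0$ it suffices, by the Riesz representation theorem, to check $\int\varphi\,\diff m^{g_e}\ge 0$ for every $\varphi\in C_c((0,T)\times\Omega)$ with $\varphi\ge 0$. By the defining convergence in Proposition~\ref{prop:gen_YM},
\[
\int\varphi\,\diff m^{g_e}=\lim_{j\to\infty}\int\varphi\,g_e(z_j)\,\diff t\,\diff x-\int\varphi\,\langle\nu_{t,x},g_e\rangle\,\diff t\,\diff x,
\]
the limit existing because the subtracted term does not depend on $j$, so everything comes down to the lower semicontinuity inequality $\liminf_j\int\varphi\,g_e(z_j)\ge\int\varphi\,\langle\nu_{t,x},g_e\rangle$. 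This is the one point that requires care, since the version of the Fundamental Theorem available to us only covers weakly $L^1$-compact sequences: for each $M>0$ the map $(t,x,\lambda)\mapsto\varphi(t,x)\min(g_e(\lambda),M)$ is a bounded Carathéodory function, and its composition with $z_j$ is bounded in $L^\infty$ and supported in the fixed compact set $\supp\varphi$, hence uniformly integrable and therefore weakly compact in $L^1$; Theorem~\ref{thm:fund_theorem_YM} then gives $\varphi\min(g_e(z_j),M)\rightharpoonup\varphi\,\langle\nu_{t,x},\min(g_e,M)\rangle$ in $L^1$. Since $\varphi\ge 0$ and $g_e(z_j)\ge\min(g_e(z_j),M)$, testing this against the constant $1$ yields $\liminf_j\int\varphi\,g_e(z_j)\ge\int\varphi\,\langle\nu_{t,x},\min(g_e,M)\rangle$, and letting $M\to\infty$ with monotone convergence (first inside $\nu_{t,x}$, then in $\diff t\,\diff x$) gives the desired inequality. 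Combined with the previous display this shows $\int\varphi\,\diff m^{g_e}\ge 0$.

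I expect the main obstacle to be exactly this truncation/lower-semicontinuity step; its whole point is that the concentration part of $g_e(z_j)$ can only help, never hurt, the inequality, because $g_e\ge 0$ and $\varphi\ge 0$, and it is precisely the coupling $|f_1|\le f_2$ that makes $g_e$ signed and therefore makes the \emph{values} $m^{f_1}(A)$ and $m^{f_2}(A)$ comparable — not merely their sizes relative to Lebesgue measure. Finally, I would remark that $|m^{f_1}(A)|\le m^{f_2}(A)$ for all Borel $A$ is in fact equivalent to $|m^{f_1}|(A)\le m^{f_2}(A)$ for all Borel $A$, by summing over countable partitions of $A$.
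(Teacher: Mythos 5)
Your proof is correct. The paper does not prove Proposition \ref{prop:comparison_measure} in the text at all --- it defers to Lemma 2.1 of \cite{MR3567640} --- and your argument (reducing to the scalar nonnegative integrands $g_e=f_2-f_1\cdot e$, proving $m^{g_e}\ge 0$ by truncation at level $M$, weak $L^1$ convergence of the truncations via the Fundamental Theorem, and monotone convergence, then optimizing over directions $e$ for each Borel set $A$) is essentially the standard proof from that reference, written out in full. The two delicate points are handled correctly: the ``common subsequence'' issue disappears because, once a subsequence realizing $m^{f_1}$ and $m^{f_2}$ is fixed, linearity and uniqueness of weak-$*$ limits give $m^{g_e}=m^{f_2}-m^{f_1}\cdot e$ for every unit vector $e$ along that same subsequence; and your closing remark correctly reconciles the setwise inequality $|m^{f_1}(A)|\le m^{f_2}(A)$ with the total-variation formulation used elsewhere in the paper.
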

Here, $|\mu|$ is the total variation measure defined as $|\mu|(A)= \mu^+(A)-\mu^-(A)$ where $\mu^+$, $\mu^-$ are positive and negative parts of $\mu$.\\

Let us conclude with a few comments about the measure $m^{f}$ which captures concentration effects. One can describe it more precisely. The first attempts to do so by some generalizations of
the Young measures were initiated by DiPerna and Majda in the case of the incompressible Euler equations~\cite{MR877643}. Then, Alibert and Bouchitté extended the result to a more general class of nonlinearities in~\cite{MR1459885}. They proved that there exists a subsequence (not relabeled) as well as a parametrized probability measure $\nu\in L_w^\infty(Q;\mathcal{P}(\R^n))$ (which is identical with the "classical" Young measure), a non-negative measure $m\in\mathcal{M}^+(Q)$, and a parametrized probability measure $\nu^\infty\in L_w^\infty(Q,m;\mathcal{P}(\mathbb{S}^{n-1}))$ such that for any Carathéodory function $f$ such that $f(x,z)/(1+|z|)$ is bounded and uniformly continuous with respect to $z$,
\begin{equation*}
f(y,z_j(y))  \stackrel{*}{\rightharpoonup}\int_{\R^d}f(y,\lambda)d\nu_y(\lambda)  +\int_{\mathbb{S}^{n-1}}f^\infty(y,\beta) \diff \nu^\infty_y(\beta) m(y)
\end{equation*}
weakly* in the sense of measures. Here, 
\begin{equation*}
f^\infty(y,\beta):=\lim_{s\rightarrow\infty}\frac{f(y,t\beta)}{t}.
\end{equation*} 
Their result was also extended to the case when $f$ has different growth with respect to different variables, see for instance~\cite{MR3424896}.

\section{Measure-valued solutions to the nonlocal Euler-Korteweg equation}\label{sect:mv_solutions}
\subsection{Definition of the dissipative measure-valued solutions}\label{sect:def_diss_mvs}
Let us motivate the definition of a measure-valued solution by their construction. We will consider a sequence of approximating solutions $\{(\rho_{\delta}, \bu_{\delta})\}$, see Section \ref{subsect:approximation}, satisfying the estimates (uniform in $\delta$)
$$
\{\rho_{\delta}\} \mbox{ in } L^{\infty}(0,T; L^2(\Td)), \qquad \{F(\rho_{\delta})\} \mbox{ in } L^{\infty}(0,T; L^1(\Td)), \qquad \{\sqrt{\rho_{\delta}} \bu_{\delta} \} \mbox{ in } L^{\infty}(0,T; L^2(\Td)),
$$
which will be a consequence of energy inequality \eqref{est:energy_approx_after_limit_garlerkin}. As we do not have estimates on $\{\bu_{\delta}\}$ itself, we will consider in fact the sequence $\{(\rho_{\delta}, \sqrt{\rho_{\delta}}\,\bu_{\delta})\}$. Up to a subsequence, we have as $\delta \to 0$ 
\begin{equation}\label{eq:weak_comp_rhodelta_rhou}
\rho_{\delta} \weaks \rho \mbox{ in } L^{\infty}(0,T; L^2(\Td)) \qquad \qquad 
\sqrt{\rho_{\delta}} \bu_{\delta} \weaks \overline{\sqrt{\rho} \bu} \mbox{ in } L^{\infty}(0,T; L^2(\Td)),
\end{equation}
where $\overline{\sqrt{\rho} \bu}$ is a {\it definition} of a weak limit of $\sqrt{\rho_{\delta}} \bu_{\delta}$. Let $\{\nu_{t,x}\}$ be the Young measure generated by this sequence as in Theorem \ref{thm:fund_theorem_YM}. We will use dummy variables $(\lambda_1,\lambda')\in\R^+\times\R^d$ when integrating with respect to $\nu_{t,x}$:
\begin{equation}\label{eq:YM_how_to_integrate}
\langle F(\lambda_1,\lambda'),\nu_{t,x}\rangle:=\int_{\R^+\times\R^d}F(\lambda_1,\lambda') \diff \nu_{x,t}(\lambda_1,\lambda'),
\end{equation}
with $\lambda_1$ representing $\rho$ variable and $\lambda'$ as representing $\overline{\sqrt{\rho}\bu}$ variable. In terms of Young measures we write weak convergence \eqref{eq:weak_comp_rhodelta_rhou} as 
\begin{equation}\label{eq:id_wl_1}
{\rho}= \langle\lambda_1,\nu\rangle, \qquad \overline{\sqrt{\rho} \bu} = \langle\lambda', \nu \rangle,
\end{equation} 
as there is no concentration measure because of integrability in $L^2((0,T)\times \Td)$. Using notation~\eqref{eq:notation_bar} we represent weak limits (as $\delta \to 0$) of all the terms that should appear in the weak formulation and the energy
\begin{equation}\label{eq:id_wl_2}
\overline{\rho^2}
=\langle\lambda_1^2,\nu\rangle+m^{\rho^{2}},
\end{equation}
\begin{equation}\label{eq:id_wl_3}
\overline{\rho \bu}=\langle\sqrt{\lambda_1}\lambda',\nu\rangle,
\end{equation}
\begin{equation}\label{eq:id_wl_4}
\overline{\rho\bu\otimes \bu}=\langle\lambda'\otimes\lambda',\nu\rangle+m^{\rho \bu\otimes\bu},
\end{equation}
\begin{equation}\label{eq:id_wl_5}
\overline{\rho|\bu|^2}=\langle|\lambda'|^2,\nu\rangle+m^{\rho|\bu|^{2}},
\end{equation}
\begin{equation}\label{eq:id_wl_8}
\overline{F(\rho)} = \langle F(\lambda_1),\nu\rangle + m^{F(\rho)}
\end{equation}
\begin{equation}\label{eq:id_wl_9}
\overline{\rho\, F'(\rho)}  = \langle \lambda_1 F'(\lambda_1),\nu\rangle + m^{\rho F'(\rho)},
\end{equation}
\begin{equation}\label{eq:id_wl_10}
\overline{p(\rho)} = \overline{\rho\, F'(\rho)} - \overline{F(\rho)} + \frac{1}{2\eta^2} \, \overline{\rho^2},
\end{equation}
where $p(\rho):= \rho F'(\rho)-F(\rho)+\f{\rho^{2}}{2\eta^{2}}$.\\

Moreover, we will identify weak limits of several nonlinearities which will be used in this work. By linearity of weak limits, we have the following identities:
\begin{equation}\label{eq:def_convolution_rho_squared}
\overline{\int_{\Td}\omega_{\eta}(y)|\rho(x)-\rho(x-y)|^{2}\diff y} = \overline{\rho^{2}} + \overline{\rho^{2}} \ast \omega_{\eta} -2\,\rho \, \omega_{\eta}\ast\rho
\end{equation}
Similarly, for all bounded $P: (0,T)\times [0,+\infty) \to \R^+$ and $\Bu: (0,T)\times \Td \to \R^d$ we have
\begin{equation}\label{eq:def_square_diff_densities}
\overline{|\rho - \Rho|^2} = \overline{\rho^2} + \Rho^2 - 2\rho\, \Rho
\end{equation}
\begin{equation}\label{eq:def_rel_kin_energy}
\overline{\rho|\bu-\Bu|^{2}} = \overline{\rho\,|\bu|^2} + \rho\, |\Bu|^2 - 2\,\overline{\rho \bu} \cdot U,
\end{equation}
\begin{equation}\label{eq:def_rel_kin_energy_tensor}
\overline{\rho(\bu-\Bu)\otimes(\bu-\Bu)} =  \langle (\lambda' - \sqrt{\lambda_1}\Bu) \otimes (\lambda' - \sqrt{\lambda_1}\Bu), \nu_{t,x}\rangle + m^{\rho \bu\otimes \bu},
\end{equation}
\begin{equation}\label{eq:nonlocal_term_with_P}
\begin{split}
&\overline{\int_{\Td}\omega_{\eta}(y)|(\rho-\Rho)(x)-(\rho-\Rho)(x-y)|^{2}\diff y}= \overline{\int_{\Td} \omega_{\eta}(y)|\rho(x) - \rho(x-y)|^2 \diff y} \, + \\ & \qquad + 
\int_{\Td} \omega_{\eta}(y)|P(x) - P(x-y)|^2 \diff y
- 2 \int_{\Td} \omega_{\eta}(y) (P(x) - P(x-y)) (\rho(x) - \rho(x-y) \diff y,
\end{split}
\end{equation}
\begin{equation}\label{eq:def_rel_potenial_rho_Rho}
\overline{F(\rho|\Rho)} := \overline{F(\rho)}-F(\Rho)-F'(\Rho)(\rho-\Rho), \qquad \overline{p(\rho|\Rho)} : =\overline{p(\rho)}-p(\Rho)-p'(\Rho)(\rho-\Rho)
\end{equation}
where nonlinearities are defined as
\begin{equation}\label{eq:F(rho-Rho)}
F(\rho|\Rho)=F(\rho)-F(\Rho)-F'(\Rho)(\rho-\Rho), \qquad  p(\rho|\Rho) = p(\rho) - p(\Rho) - p'(\Rho)(\rho-\Rho).     
\end{equation}

Now, we define measure-valued solutions by {\it inverting} this discussion. \\

\begin{Def}[Measure-valued solution]\label{def:dissipative_measure} We say that $(\rho, \overline{\sqrt{\rho}\bu}, \nu,m)$ where
$$
\nu = \{\nu_{t,x}\} \in L^{\infty}_{\text{weak}}((0,T)\times\Td;\mathcal{P}([0,+\infty)\times\mathbb{R}^{d}))
$$
$$
\rho =  \langle\lambda_1,\nu\rangle = \int_{\R^+\times\R^d} \lambda_1 \diff \nu_{x,t}(\lambda_1,\lambda')  \in L^{\infty}(0,T; L^2(\Td)), 
$$
$$
\overline{\sqrt{\rho}  \bu}  =  \langle\lambda' ,\nu\rangle = \int_{\R^+\times\R^d} \lambda' \diff \nu_{x,t}(\lambda_1,\lambda')  \in L^{\infty}(0,T; L^2(\Td)),
$$
$$
m = \left(m^{\rho^{2}}, m^{\rho \bu\otimes\bu}, m^{\rho|\bu|^{2}}, m^{F(\rho)}, m^{\rho F'(\rho)}\right)
$$
with 
\begin{align*}
&m^{\rho^2}, m^{\rho|\bu|^2}, m^{F(\rho)} \in L^{\infty}((0,T);\mathcal{M}^{+}(\Td)),  \qquad 
&&m^{\rho F'(\rho)} \in L^{\infty}((0,T);\mathcal{M}(\Td)), \\
&m^{\rho \bu\otimes\bu} \in L^{\infty}((0,T); \mathcal{M}(\Td)^{d\times d})
\end{align*}
and
\begin{equation}\label{eq:inequality_2_conc_measures}
|m^{\varrho \bu\otimes \bu}| \leq m^{\varrho |\bu|^2}
\end{equation}
\begin{equation}\label{eq:inequality_2_conc_measures_func_p_F}
|m^{\rho\,F'(\rho)}| \leq C_F\, m^{F(\rho)} + C_F \,m^{\rho^2}, \mbox{$C_F$ defined in \eqref{eq:bound_notrel_p_F}} 
\end{equation}

is a measure-valued solution of~\eqref{eq:EK1}-\eqref{eq:EK2} with initial data $({\rho^0},{\bu^0})$ if for every $\psi\in C^1_c([0,T)\times\Td;\R)$, $\phi\in C^1_c([0,T)\times\Td;\R^d)$ it holds that
\begin{equation}\label{eq:weak_sol_EK_approx_eq1}
\int_0^T \int_{\Td} \partial_t\psi \, {\rho}+\f{1}{\eps}\nabla\psi\cdot \overline{\rho\bu} \diff x \diff t 
+\int_{\Td}\psi(x,0){\rho^0} \diff x =0,
\end{equation}
\begin{equation}\label{eq:weak_sol_EK_approx_eq2}
\begin{split}
\int_0^T \int_{\Td}\partial_t\phi\cdot\overline{\rho \bu}+\f{1}{\eps}\nabla\phi : \overline{\rho \bu\otimes \bu}-\f{1}{\eps^{2}}\phi\cdot\overline{\rho\bu}+\f{1}{\eps}\DIV\phi\, \overline{p(\rho)}&+\f{1}{\eps\eta^{2}}\phi\cdot {\rho \nabla\omega_{\eta}\ast\rho} \diff x \diff t
\\
&   +\int_{\Td}\phi(x,0)\cdot {\rho^0}\, {\bu^0} \diff x=0,
\end{split}
\end{equation}
where $p(\rho)=\rho F'(\rho)-F(\rho)+\f{\rho^{2}}{2\eta^{2}}$ and all the terms are defined in \eqref{eq:id_wl_1}--\eqref{eq:id_wl_10}.  
\end{Def}
\begin{Def}[nonlinear functions]\label{def:nonlin_fun_mvs}
Given a measure-valued solution $(\rho, \overline{\sqrt{\rho}\bu}, \nu,m)$ and bounded $P: (0,T)\times [0,+\infty) \to \R^+$, $\Bu: (0,T)\times \Td \to \R^d$, we define nonlinear quantities
$$
\overline{\int_{\Td}\omega_{\eta}(y)|\rho(x)-\rho(x-y)|^{2}\diff y}, \qquad \overline{|\rho - \Rho|^2}, \qquad \overline{\rho|\bu-\Bu|^{2}}, \qquad \overline{F(\rho|\Rho)}, \qquad \overline{p(\rho|\Rho)},
$$
$$
\overline{\rho(\bu-\Bu)\otimes(\bu-\Bu)}, \qquad \overline{\int_{\Td}\omega_{\eta}(y)|(\rho-\Rho)(x)-(\rho-\Rho)(x-y)|^{2}\diff y}
$$
by formulas \eqref{eq:def_convolution_rho_squared}--\eqref{eq:def_rel_potenial_rho_Rho}.
\end{Def}
\begin{Def}[energy]
Given a measure-valued solution $(\rho, \overline{\sqrt{\rho}\bu}, \nu,m)$ for a.e. $t \in (0,T)$ we define the energy as
\begin{equation*}
E_{mvs}(t):=\int_{\Td}\f{1}{2}\overline{\rho|\bu|^{2}}+\overline{F(\rho)}\diff x+\f{1}{4\eta^{2}}\int_{\Td} \overline{\int_{\Td}\omega_{\eta}(y)|\rho(x)-\rho(x-y)|^{2}\diff y} \diff x,   
\end{equation*}
where the nonlocal term is defined by \eqref{eq:def_convolution_rho_squared}. We also define
\begin{equation*}
E_0:=\int_{\Td}\frac{1}{2}{\rho^0}|u_0|^2(x)+F({\rho^0})\diff x+\f{1}{4\eta^{2}}\int_{\Td}\int_{\Td}\omega_{\eta}(y)|{\rho^0}(x)-{\rho^0}(x-y)|^{2}\diff x\diff y.   
\end{equation*}
\end{Def}
This energy is well-defined because, by Proposition \ref{prop:gen_YM}, a concentration measure $m\in L^{\infty}(0,T; \mathcal{M}(\Td))$ admits disintegration $\diff m(t,x) = m(t,\diff x) \diff t$ where $m(t,\cdot)$ is a well-defined measure on $\Td$ for a.e. $t \in (0,T)$.\\ 

We now introduce two properties which allows to select the \textit{right} measure-valued solutions.
\begin{Def}[Dissipativite measure-valued solution]\label{def:diss_mvs}
We say that a measure-valued solution $(\rho, \sqrt{\rho}\bu, \nu,m)$ is dissipative if
\begin{equation}\label{EEmvsenergy}
\begin{aligned}
E_{mvs}(t)+\f{1}{\eps^{2}}\int_{0}^{t}\int_{\Td}\overline{\rho|\bu|^{2}}\diff x\diff t\leq E_0
\end{aligned}
\end{equation}
for almost every $t\in(0,T)$. 
\end{Def}

\begin{Def}[Poincaré inequality]\label{def:admi_mvs}
A measure-valued solution $(\rho, \overline{\sqrt{\rho}\bu}, \nu,m)$ with initial condition ${\rho^0}$ satisfies the nonlocal Poincaré inequality if for a.e. $t \in (0,T)$ and all bounded $\Rho:\Omega_T \to [0,+\infty)$ such that $(\Rho)_{\Td} = ({\rho^0})_{\Td}$ we have
\begin{equation}\label{eq:nonloc_Poinc_stronger_bar}
\int_{\Td} \overline{|\rho - P|^2} \diff x \leq  \frac{C_P}{4\eta^2}\, \int_{\Td} \overline{\int_{\Td}\omega_{\eta}(y)|(\rho-\Rho)(x)-(\rho-\Rho)(x-y)|^{2}\diff y} \diff x .
\end{equation}
where the constant $C_P$ is given by Lemma \ref{lem:Poincaré_nonlocal_H1_L2}.
\end{Def}
Let us remark that in Lemma \ref{lem:rel_entr_nonneg}, we will prove that any measure-valued solution satisfies
$$
\int_{\Td} {|\rho - P|^2} \diff x \leq  \frac{C_P}{4\eta^2}\, \int_{\Td} \overline{\int_{\Td}\omega_{\eta}(y)|(\rho-\Rho)(x)-(\rho-\Rho)(x-y)|^{2}\diff y} \diff x .
$$
which is a weaker version of \eqref{eq:nonloc_Poinc_stronger_bar}. Nevertheless, \eqref{eq:nonloc_Poinc_stronger_bar} will be necessary to estimate several terms appearing in the application of the relative entropy method in Section \ref{sect:conv_rel_entr}. Let us also point out that similar Poincaré-type inequalities are usually assumed for measure-valued solutions to several different PDEs, see for instance \cite[eq. (2.23)]{MR3567640}.\\

We conclude with a simple observation concerning the energy.
\begin{lem}\label{lem:nonneg_the_energy_one}
The energy $E_{mvs}$ defined by \eqref{EEmvsenergy} is nonnegative.
\end{lem}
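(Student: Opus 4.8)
The plan is to split $E_{mvs}(t)$ into the kinetic term $\f12\int_{\Td}\overline{\rho|\bu|^{2}}\diff x$, the potential term $\int_{\Td}\overline{F(\rho)}\diff x$ and the nonlocal term $\f{1}{4\eta^{2}}\int_{\Td}\overline{\int_{\Td}\omega_{\eta}(y)|\rho(x)-\rho(x-y)|^{2}\diff y}\diff x$, and to check that each of the three is separately nonnegative. The only ingredients are: (i) each $\nu_{t,x}$ is a probability measure; (ii) the concentration measures $m^{\rho|\bu|^{2}}$, $m^{F(\rho)}$, $m^{\rho^{2}}$ are nonnegative, which is part of Definition~\ref{def:dissipative_measure}; and (iii) Jensen's inequality for the convex map $\lambda_1\mapsto\lambda_1^2$. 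For the kinetic term, \eqref{eq:id_wl_5} gives $\overline{\rho|\bu|^{2}}=\langle|\lambda'|^{2},\nu_{t,x}\rangle+m^{\rho|\bu|^{2}}\ge 0$, since $\nu_{t,x}\ge 0$ and $m^{\rho|\bu|^{2}}$ is a nonnegative measure. For the potential term, \eqref{eq:id_wl_8} gives $\overline{F(\rho)}=\langle F(\lambda_1),\nu_{t,x}\rangle+m^{F(\rho)}\ge 0$, using that $F$ is nonnegative (recall $F_1\ge 0$ in Assumption~\ref{ass:potentialF}, and that $F_2$ is nonnegative in the cases of interest, cf. Example~\ref{ex:potentials}) and that $m^{F(\rho)}$ is a nonnegative measure. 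Hence both integrate to a nonnegative quantity.

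The only part requiring a short computation is the nonlocal term. Starting from the identity \eqref{eq:def_convolution_rho_squared} and integrating over $\Td$, one uses Fubini together with $\int_{\R^{d}}\omega_{\eta}=1$ — so that convolution with $\omega_{\eta}$ preserves total mass, both for the absolutely continuous and for the singular part of $\overline{\rho^{2}}$ — to obtain
\[
\int_{\Td}\overline{\int_{\Td}\omega_{\eta}(y)|\rho(x)-\rho(x-y)|^{2}\diff y}\,\diff x \;=\; 2\int_{\Td}\overline{\rho^{2}}\,\diff x \;-\; 2\int_{\Td}\rho\,(\omega_{\eta}\ast\rho)\,\diff x .
\]
By \eqref{eq:id_wl_2}, \eqref{eq:id_wl_1}, Jensen's inequality and $m^{\rho^{2}}\ge 0$,
\[
\int_{\Td}\overline{\rho^{2}}\,\diff x \;=\; \int_{\Td}\langle\lambda_1^{2},\nu_{t,x}\rangle\,\diff x + m^{\rho^{2}}(\Td)\;\ge\; \int_{\Td}\langle\lambda_1,\nu_{t,x}\rangle^{2}\,\diff x \;=\; \|\rho\|_{L^{2}(\Td)}^{2},
\]
while Young's convolution inequality together with $\|\omega_{\eta}\|_{L^{1}}=1$ gives $\int_{\Td}\rho\,(\omega_{\eta}\ast\rho)\,\diff x\le\|\rho\|_{L^{2}(\Td)}\,\|\omega_{\eta}\ast\rho\|_{L^{2}(\Td)}\le\|\rho\|_{L^{2}(\Td)}^{2}$. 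Therefore the nonlocal term is bounded below by $2\|\rho\|_{L^{2}(\Td)}^{2}-2\|\rho\|_{L^{2}(\Td)}^{2}=0$, and summing the three contributions yields $E_{mvs}(t)\ge 0$.

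There is no genuine obstacle here; the one point deserving a remark is that $\overline{\rho^{2}}$ is a measure rather than a function, so the Jensen step above should be read as the pointwise inequality $\langle\lambda_1^{2},\nu_{t,x}\rangle\ge\langle\lambda_1,\nu_{t,x}\rangle^{2}$ for a.e.\ $x$ combined with the nonnegativity of $m^{\rho^{2}}$, and the mass-preservation of convolution has to be invoked for the singular part of $\overline{\rho^{2}}$ as well. Equivalently, one may avoid the integration and argue pointwise in $x$: since $\overline{\rho^{2}}-\rho^{2}=\big(\langle\lambda_1^{2},\nu_{t,x}\rangle-\langle\lambda_1,\nu_{t,x}\rangle^{2}\big)+m^{\rho^{2}}\ge 0$ and $\omega_{\eta}\ge 0$, the identity \eqref{eq:def_convolution_rho_squared} shows $\overline{\int_{\Td}\omega_{\eta}(y)|\rho(x)-\rho(x-y)|^{2}\diff y}\ge\int_{\Td}\omega_{\eta}(y)|\rho(x)-\rho(x-y)|^{2}\diff y\ge 0$ for a.e.\ $x$, which gives the desired nonnegativity directly after integration.
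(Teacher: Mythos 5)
Your argument is correct and follows essentially the same route as the paper: the same three-way split of $E_{mvs}$, nonnegativity of the kinetic and potential contributions from the nonnegative concentration measures (with the potential term resting, as in the paper, on $F\ge 0$), and for the nonlocal term the identity \eqref{eq:def_convolution_rho_squared} reduced via Cauchy--Schwarz/Young for $\int_{\Td}\rho\,(\omega_{\eta}\ast\rho)$ and Jensen's inequality $\int_{\Td}\overline{\rho^{2}}\ge\int_{\Td}\rho^{2}$, exactly as in \eqref{eq:estimate_rho_sq_sq_rho}. Your extra remarks (the Fubini/mass-preservation step for the convolved part of $\overline{\rho^{2}}$ and the alternative pointwise argument) only make explicit what the paper leaves implicit, so nothing further is needed.
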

\begin{proof}
The lemma seems to be trivial from the point of view of our discussion about weak limits at the beginning of this section. However, the measure-valued solution is defined by Definition \ref{def:dissipative_measure} so that we can argue only using Definitions \ref{def:dissipative_measure} and \ref{def:nonlin_fun_mvs}. Clearly, $\f{1}{2}\overline{\rho|\bu|^{2}}$ and $\overline{F(\rho)}$ are nonnegative so that we only have to study the nonlocal term. By \eqref{eq:def_convolution_rho_squared}, 
$$
\int_{\Td} \overline{\int_{\Td}\omega_{\eta}(y)|\rho(x)-\rho(x-y)|^{2}\diff y} {\,\diff x} = 2 \int_{\Td} \overline{\rho^{2}} {\,\diff x} -2 \int_{\Td} \rho \, \omega_{\eta}\ast\rho {\,\diff x}.
$$
By Cauchy-Schwarz and Young convolution inequalities:
$$
2 \int_{\Td} \rho \, \omega_{\eta}\ast\rho \diff x \leq 2 \int_{\Td} \rho^2 \diff x.
$$
Using Jensen's inequality (measure $\nu_{t,x}$ is the probability measure with respect to both coordinates)
\begin{equation}\label{eq:estimate_rho_sq_sq_rho}
\int_{\Td} \rho^2 \diff x = \int_{\Td} \langle \lambda_1, \nu_{t,x} \rangle^2 \diff x \leq \int_{\Td} \langle \lambda_1^2, \nu_{t,x} \rangle \diff x \leq \int_{\Td} {\overline{\rho^2}} \diff x
\end{equation}
so that the nonlocal term is nonnegative.
\end{proof}

\subsection{The approximating system}\label{subsect:approximation}

To construct a measure-valued solution we use a method as outlined in~\cite[Section 5.5]{MR1409366}, see also~\cite{Debiec,MR2182484}. This is a fairly standard procedure based on regularizing density by a positive parameter
\begin{equation}\label{eq:init_cond_approx_delta}
{\rho^0_{\delta}}:={\rho^0}+\delta,\quad {\rho^0}\in C^{1}(\Td),\, {\rho^0}>0, \quad { \bu^{0}_{\delta}}(x):={ \bu^{0}}(x)\in W^{3,2}(\Td)^{d},     
\end{equation}
which makes the density $\rho_{\delta}$ globally bounded from below. We will only discuss the main steps and for the full presentation, we refer to \cite[Section 5.5]{MR1409366}.\\

We work in $W^{3,2}(\Td)^d$ (but for dimensions $d$ higher than 3, we need to work even in $W^{1+d,2}(\Td)$) because of the embedding $W^{3,2}(\Td) \subset C^1(\Td)$ which will be important for certain estimates. We use notation $(\!(\cdot,\cdot)\!)$ for the standard scalar product in $W^{3,2}(\Td)^d$. By \cite[Appendix, Theorem 4.11]{MR1409366}, we take $\{\bwi\}$ to be an orthonormal basis of $W^{3,2}(\Td)^{d}$ which are $C^{\infty}(\Td)^{d}$ functions. Finally, we define $\Pi^{N}$ to be the projection operator into $\Span\{\bm{\omega_{1}},..., \bm{\omega_{N}}\}$ which satisfies $\norm{\Pi^{N}\bu}_{W^{3,2}}\le \norm{\bu}_{W^{3,2}}$ and $\norm{\Pi^{N}\bu}_{L^2}\le \norm{\bu}_{L^2}$. \\

We will find solution $(\rho_\delta,\bu_\delta)$ such that 
\begin{equation}\label{est:regularity_approx_delta}
\begin{split}
&\rho_\delta\in L^{\infty}((0,T)\times\Td)\cap L^{2}(0,T;W^{1,2}(\Td)),\quad \f{\p\rho_{\delta}}{\p t}\in L^{2}((0,T)\times\Td)\\
&\bu_\delta\in L^{\infty}(0,T;W^{3,2}(\Td)),\quad \f{\p\bu_{\delta}}{\p t}\in L^{2}((0,T)\times\Td), 
\end{split}
\end{equation}
to the following problem: for all $\psi\in C^1_c([0,T)\times\Td;\R)$, $\phi\in C^1_c([0,T)\times\Td;\R^d)$ it holds that
\begin{equation}\label{eq:approx_delta_rho}
\int_0^T\int_{\Td}\partial_t\psi \rho_{\delta}+\f{1}{\eps}\nabla\psi\cdot\rho_{\delta}\bu_{\delta} \diff x \diff t +\int_{\Td}\psi(x,0){\rho^0_{\delta}} \diff x=0,
\end{equation}
\begin{equation}\label{eq:approx_delta_u}
\begin{split}
&\int_0^T\int_{\Td}\partial_t\phi\cdot\rho_{\delta}\bu_{\delta}+\f{1}{\eps}\nabla\phi : \rho_{\delta} \bu_{\delta}\otimes \bu_{\delta}-\f{1}{\eps^{2}}\phi\cdot\rho_{\delta}\bu_{\delta}+\f{1}{\eps}\DIV\phi \, p(\rho_{\delta})
\diff x \diff t\\
& \qquad +\int_0^T\int_{\Td} \f{1}{\eps\eta^{2}}\phi\cdot\rho_{\delta} \nabla\omega_{\eta}\ast\rho_{\delta} \diff x \diff t+\int_{\Td}\phi(x,0)\cdot {\rho^0_{\delta}}{ \bu^{0}_{\delta}} \diff x =\delta\int_{0}^{T}(\!(\bu_{\delta},\phi)\!)\diff t.
\end{split}
\end{equation}
To find the solution to \eqref{eq:approx_delta_rho}--\eqref{eq:approx_delta_u}, we use the method of Galerkin approximations. We look for $\bu^N$ of the form
$$
\bu^{N}=\sum_{j=1}^{N}c_{j}^{N}(t)\,\bwj
$$
solving
\begin{equation}\label{eq:approx_delta_galerkin_1}
\f{\p \rho^{N}}{\p t}+\f{1}{\eps}\DIV(\rho^{N}\bu^{N}) = 0,
\end{equation}
\begin{equation}\label{eq:approx_delta_galerkin_2}
\begin{split}
\int_{\Td}\left(\rho^{N}\p_{t}\bu^{N}+\f{1}{\eps}\rho^{N} \bu^{N} \nabla\bu^{N}+\f{1}{\eps^{2}}\rho^{N}\bu^{N}+\f{1}{\eps}\nabla p(\rho^{N})-\f{1}{\eps\eta^{2}}\rho^{N} \nabla\omega_{\eta}\ast\rho^{N}\right)\cdot \bm{\omega_{i}} \diff x\,+\\+\, \delta (\!(\bu^{N},\bm{\omega_{i}})\!)=0,
\end{split} 
\end{equation}
for $i=1,...,N$ with initial conditions $\rho^{N}(0)={\rho^0_{\delta}}$, $\bu^{N}(0)=\Pi^{N}{ \bu^{0}_{\delta}}$.\\

The proof of existence to \eqref{eq:approx_delta_galerkin_1}--\eqref{eq:approx_delta_galerkin_2} follows 3 steps: using a fixed point argument to prove the existence on a small interval, deriving a priori estimates on this interval, extending the procedure on the whole interval. The crucial point is the lower bound on $\rho^N$ in terms of $\delta$. This is obtained by the method of characteristics. Indeed,
\begin{equation}\label{eq:rho_estimate_from_below}
\rho^N(t,x) \geq \essinf_{x \in \Td} {\rho^0_{\delta}} \,  \exp\left(-\f{1}{\eps}\int_0^T \| \text{div} \, \bu^N\|_{\infty} \diff t \right) \geq \delta \, \exp\left(-\f{1}{\eps}\int_0^T \|\bu^N\|_{W^{3,2}} \diff t \right)
\end{equation}
by the well-known formula for the continuity equation. On the other hand, thanks to the regularizing term, $\|\bu^N\|_{L^2(0,T; W^{3,2}(\Td))} \leq \frac{C}{\delta}$. This gives uniform lower (and also upper) bound on $\rho^N$ and allows us to look at \eqref{eq:approx_delta_galerkin_2} as a system of ODEs. We refer to~\cite[Section 5.5]{MR1409366} and omit the details. We obtain the following lemma:
\begin{lem}\label{lem:estimates_Galerkin}
For fixed $N$, there exists a solution to~\eqref{eq:approx_delta_galerkin_1}--\eqref{eq:approx_delta_galerkin_2} such that $\rho^{N}\in C^{1}([0,T]\times\Td)$, $\bu^{N}\in C^{1}([0,T];W^{3,2}(\Td)^{d})$. Moreover, we have the energy estimate: for all times $\tau \in [0,T]$
\begin{equation}\label{est:energy_approx_garlerkin}
\begin{split}
&\int_{\Td}\f{1}{2}\rho^{N}|\bu^{N}|^{2}+F(\rho^{N})\diff x+\f{1}{4\eta^{2}}\int_{\Td}\int_{\Td}\omega_{\eta}(y)|\rho^{N}(x)-\rho^{N}(x-y)|^{2}\diff x\diff y\\ 
&+ \delta\int_{0}^{\tau} \| u^N \|_{W^{3,2}}^2 \diff t +\f{1}{\eps^{2}}\int_{0}^{\tau}\int_{\Td}\rho^{N}|\bu^{N}|^{2}\diff x { \, \diff t} \le \\
&\le \int_{\Td}\f{1}{2}{\rho^0_{\delta}}|{ \bu^{0}}|^{2}+F({\rho^0_{\delta}})\diff x+\f{1}{4\eta^{2}}\int_{\Td}\int_{\Td}\omega_{\eta}(y)|{\rho^0_{\delta}}(x)-{\rho^0_{\delta}}(x-y)|^{2}\diff x\diff y,
\end{split}
\end{equation}
as well as the following estimates
\begin{align}
&\rho^{N}(t,x)\ge C\left(\f{1}{\delta}\right) \label{eq:bounds_delta_1}\\
&\norm{\rho^{N}}_{L^{\infty}((0,T)\times\Omega)}+\int_{0}^{\tau}\norm{\p_{t}\rho^{N}}_{L^{2}(\Td)}^{2}{\diff t}+\int_{0}^{\tau}\norm{\nabla\rho^{N}}_{L^{2}(\Td)}^{2}{\diff t}\le C\left(\f{1}{\delta}\right) \label{eq:bounds_delta_2},\\
&\int_{0}^{T}\norm{\p_{t}\bu^{N}}_{L^{2}(\Td)}^{2}{\diff t}+\delta\norm{\bu^{N}}_{L^{2}((0,T);W^{3,2}(\Td)^{d})}\le C\left(\f{1}{\delta}\right) \label{eq:bounds_delta_3},
\end{align}
where $C\left(\f{1}{\delta}\right)$ is a constant depending on $\f{1}{\delta}$ and other fixed parameters (like $\varepsilon$).
\end{lem}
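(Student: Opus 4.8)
The plan is the classical three-step construction of Galerkin solutions for a compressible model with artificial viscosity, as in \cite[Section 5.5]{MR1409366}: first solve the continuity equation \eqref{eq:approx_delta_galerkin_1} for $\rho^N$ as a functional of $\bu^N$, then reduce \eqref{eq:approx_delta_galerkin_2} to an ODE system for the coefficients $c_i^N$, and finally close it by the energy identity, which is exactly \eqref{est:energy_approx_garlerkin}. Given a candidate $\bu^N \in C([0,T^\ast];\Span\{\bwi,\dots,\bm{\omega_N}\})$, the field $\f{1}{\eps}\bu^N$ is Lipschitz in $x$ since each $\bwi$ is $C^\infty$, so its flow map is well defined and \eqref{eq:approx_delta_galerkin_1} is solved explicitly along characteristics. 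The representation formula gives at once the two-sided bound
\begin{equation*}
\essinf_{\Td}\rho_{0,\delta}\,\exp\!\Big(-\f{1}{\eps}\int_0^t\norm{\DIV\bu^N}_{L^\infty}\Big)\ \le\ \rho^N(t,x)\ \le\ \norm{\rho_{0,\delta}}_{L^\infty}\exp\!\Big(\f{1}{\eps}\int_0^t\norm{\DIV\bu^N}_{L^\infty}\Big),
\end{equation*}
and, differentiating this formula (equivalently, the transport equation satisfied by $\nabla\rho^N$) and using Grönwall's lemma, bounds for $\rho^N$ in $L^\infty(0,T^\ast;W^{1,2}(\Td))$ and for $\p_t\rho^N$ in $L^2$, all controlled by $\int_0^{T^\ast}\norm{\bu^N}_{W^{3,2}}$ and the data. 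Inserting $\rho^N=\rho^N[\bu^N]$ into \eqref{eq:approx_delta_galerkin_2} turns it into a system of $N$ ODEs for $(c_1^N,\dots,c_N^N)$ with locally Lipschitz right-hand side, because $\bu^N\mapsto\rho^N$ is continuous and, by the lower bound above, the mass matrix $\big(\int_{\Td}\rho^N\,\bwi\cdot\bwj\big)_{i,j}$ is invertible with controlled inverse; Picard--Lindelöf then yields a unique $C^1$ solution on a maximal interval $[0,T^\ast)$.

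The a priori estimate that both proves \eqref{est:energy_approx_garlerkin} and forces $T^\ast=T$ comes from multiplying the $i$-th equation in \eqref{eq:approx_delta_galerkin_2} by $c_i^N(t)$ and summing over $i$, i.e.\ testing with $\bu^N$. Using the continuity equation, the inertial terms combine into $\f{d}{dt}\int_{\Td}\f{1}{2}\rho^N|\bu^N|^2$, the friction term gives $\f{1}{\eps^2}\int_{\Td}\rho^N|\bu^N|^2$, and the regularizing term gives $\delta\norm{\bu^N}_{W^{3,2}}^2$. The key algebraic point is that the pressure term $\f{1}{\eps}\int_{\Td}\nabla p(\rho^N)\cdot\bu^N$ and the nonlocal term $-\f{1}{\eps\eta^2}\int_{\Td}\rho^N\,\nabla(\omega_{\eta}\ast\rho^N)\cdot\bu^N$ together reconstruct $\f{d}{dt}\big(\int_{\Td}F(\rho^N)+\f{1}{4\eta^2}\int_{\Td}\int_{\Td}\omega_{\eta}(y)|\rho^N(x)-\rho^N(x-y)|^2\big)$: one uses $p'(\rho)=\rho F''(\rho)+\rho/\eta^2$, the identity $\f{d}{dt}\int_{\Td}F(\rho^N)=\f{1}{\eps}\int_{\Td}\rho^N F''(\rho^N)\nabla\rho^N\cdot\bu^N$ coming from \eqref{eq:approx_delta_galerkin_1}, and the fact that the nonlocal free energy equals $\f{1}{2}\int_{\Td}\rho^N B_{\eta}[\rho^N]$, whose time derivative (with $B_{\eta}$ self-adjoint because $\omega$ is radial) produces the remaining $\f{1}{\eps\eta^2}\int_{\Td}\rho^N\nabla\rho^N\cdot\bu^N$ and cancels the nonlocal term. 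Integrating in time and recalling $\rho_{0,\delta}=\rho_0+\delta$ and $\norm{\Pi^N\bu_{0,\delta}}_{L^2}\le\norm{\bu_{0,\delta}}_{L^2}$ (and the analogous bound in $W^{3,2}$ inside the free-energy contributions) gives \eqref{est:energy_approx_garlerkin}.

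From \eqref{est:energy_approx_garlerkin} we read off $\delta\int_0^\tau\norm{\bu^N}_{W^{3,2}}^2\le E_0$, hence $\int_0^T\norm{\bu^N}_{W^{3,2}}\le\sqrt{T\,E_0/\delta}$; substituting this into the characteristic bounds above yields \eqref{eq:bounds_delta_1} and the $L^\infty((0,T)\times\Td)$ bound in \eqref{eq:bounds_delta_2}, while the transport estimate for $\nabla\rho^N$ gives the remaining terms of \eqref{eq:bounds_delta_2}. For \eqref{eq:bounds_delta_3}, the $\delta$-weighted $W^{3,2}$ bound is again \eqref{est:energy_approx_garlerkin}, and $\int_0^T\norm{\p_t\bu^N}_{L^2}^2$ is obtained by isolating $\p_t\bu^N$ in \eqref{eq:approx_delta_galerkin_2}: since $\rho^N\ge C(1/\delta)>0$ the mass matrix is invertible with norm controlled by $1/\delta$, and every remaining term is estimated in $L^2$ by the bounds already established. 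As all these constants depend only on $1/\delta$, $\eps$, $\eta$, $T$ and the data — and not on the length of the existence interval — the standard continuation criterion upgrades the local solution to one on all of $[0,T]$, and the stated regularity $\rho^N\in C^1([0,T]\times\Td)$, $\bu^N\in C^1([0,T];W^{3,2}(\Td)^d)$ follows from the representation formula and the ODE.

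The main obstacle is the circular dependence between the two-sided bounds on $\rho^N$ and the $W^{3,2}$-bound on $\bu^N$: the characteristic estimates for $\rho^N$ require control of $\int_0^t\norm{\bu^N}_{W^{3,2}}$, which is only provided by the $\delta$-regularized energy identity — so the argument must be organized so that the energy estimate is already in force before strict positivity of $\rho^N$ is invoked to treat \eqref{eq:approx_delta_galerkin_2} as an ODE system. This is precisely the role of the artificial viscosity $\delta(\!(\bu^N,\bwi)\!)$, and also why none of the constants in \eqref{eq:bounds_delta_1}--\eqref{eq:bounds_delta_3} is uniform in $\delta$. The other slightly delicate computation is the cancellation in the free-energy part of the energy identity — the interplay between $p$, $F$ and $B_{\eta}$ — but this is routine once $p'(\rho)=\rho F''(\rho)+\rho/\eta^2$ and the self-adjointness of $B_{\eta}$ are used.
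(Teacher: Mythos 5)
Your overall construction (solving \eqref{eq:approx_delta_galerkin_1} along characteristics, reducing \eqref{eq:approx_delta_galerkin_2} to an ODE system, proving \eqref{est:energy_approx_garlerkin} by testing with $\bu^{N}$, and continuing in time) is the same as the paper's, and your cancellation between the pressure term and the nonlocal term in the energy identity is correct. The genuine gap is in your treatment of \eqref{eq:bounds_delta_3}. First, the bound $\delta\norm{\bu^{N}}_{L^{\infty}((0,T);W^{3,2}(\Td)^{d})}\le C(1/\delta)$ does \emph{not} follow ``again'' from \eqref{est:energy_approx_garlerkin}: the energy inequality only controls $\delta\int_{0}^{\tau}\norm{\bu^{N}}_{W^{3,2}}^{2}\diff t$, an $L^{2}$-in-time quantity, from which no $L^{\infty}$-in-time control of $\norm{\bu^{N}(t)}_{W^{3,2}}$ can be extracted. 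Second, your derivation of $\int_{0}^{T}\norm{\p_{t}\bu^{N}}_{L^{2}}^{2}$ by inverting the mass matrix $\bigl(\int_{\Td}\rho^{N}\,\bwi\cdot\bwj\bigr)_{i,j}$ ``with norm controlled by $1/\delta$'' is not uniform in $N$: the basis $\{\bwi\}$ is orthonormal in $W^{3,2}$, not in $L^{2}$, so the smallest eigenvalue of the mass matrix is only bounded below by the lower bound of \eqref{eq:bounds_delta_1} times $\min_{|a|=1}\|\sum_{i}a_{i}\bwi\|_{L^{2}}^{2}$, which degenerates as $N\to\infty$. An $N$-dependent constant suffices for the Picard--Lindel\"of step, but \eqref{eq:bounds_delta_3} must be uniform in $N$, since it is exactly what provides $\p_{t}\bu_{\delta}\in L^{2}$ and the compactness used to pass to the limit $N\to\infty$. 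Moreover, the regularizing term $\delta(\!(\bu^{N},\cdot)\!)$ is a $W^{3,2}$ inner-product term, not an $L^{2}$ function, so it cannot simply be ``estimated in $L^{2}$ by the bounds already established''; pairing it with an arbitrary right-hand side would require control of six spatial derivatives of $\bu^{N}$ (or of $\norm{\p_{t}\bu^{N}}_{W^{3,2}}$), which is not available.

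The paper's route closes both points at once: test \eqref{eq:approx_delta_galerkin_2} with $\p_{t}\bu^{N}$, i.e.\ multiply by $(c_{i}^{N})'(t)$ and sum over $i$. Then $\int_{\Td}\rho^{N}|\p_{t}\bu^{N}|^{2}$ dominates $\norm{\p_{t}\bu^{N}}_{L^{2}}^{2}$ up to a constant $C(1/\delta)$ by \eqref{eq:bounds_delta_1}, while the regularizing term becomes the exact derivative $\f{\delta}{2}\f{d}{dt}\norm{\bu^{N}}_{W^{3,2}}^{2}$, whose time integral gives precisely the $L^{\infty}$-in-time part of \eqref{eq:bounds_delta_3} (using $\norm{\Pi^{N}\bu_{0}}_{W^{3,2}}\le\norm{\bu_{0}}_{W^{3,2}}$ at the initial time); the remaining terms are genuine $L^{2}$ functions controlled by \eqref{est:energy_approx_garlerkin} and \eqref{eq:bounds_delta_1}--\eqref{eq:bounds_delta_2}, and can be absorbed by Cauchy--Schwarz. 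With this replacement for Step \eqref{eq:bounds_delta_3}, the rest of your argument coincides with the paper's proof.
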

\begin{proof}
The energy estimate follows by testing \eqref{eq:approx_delta_galerkin_2} by $\bu^N$ (in the Galerkin sense: we multiply \eqref{eq:approx_delta_galerkin_2} by $c_i^N$ and sum for $i = 1, ..., N$) {and sum it with \eqref{eq:approx_delta_galerkin_1} multiplied by $\frac{1}{2}\,|\bu^{N}|^{2}$}. Estimate \eqref{eq:bounds_delta_1} follows from the characteristics as explained in \eqref{eq:rho_estimate_from_below}. Similarly, we obtain the upper bound. Concerning the estimates on derivatives of $\rho^N$, they follow by differentiating the formula from the method of characteristics and using the bound $\|\bu^N\|_{L^2(0,T; W^{3,2}(\Td))} \leq \frac{C}{\delta}$. Finally, \eqref{eq:bounds_delta_3} is a consequence of testing \eqref{eq:approx_delta_galerkin_2} by $\partial_t\bu^N$.
\end{proof}
Using the estimates in Lemma \ref{lem:estimates_Galerkin}, up to a subsequence, we can pass to the limit $N \to \infty$
\begin{align*}
    &\rho^{N}\to \rho_{\delta} \quad \text{strongly in $L^{2}((0,T)\times\Td)$},\\
    &\bu^{N}\to \bu_{\delta} \quad \text{strongly in $L^{2}((0,T) \times \Td)^{d}$}
\end{align*}
(the convergence holds even in better spaces). We also have an energy inequality:
\begin{equation}\label{est:energy_approx_after_limit_garlerkin}
\begin{split}
&\int_{\Td}\f{1}{2}\rho_{\delta}|\bu_{\delta}|^{2}+F(\rho_{\delta})\diff x+\f{1}{4\eta^{2}}\int_{\Td}\int_{\Td}\omega_{\eta}(y)|\rho_{\delta}(x)-\rho_{\delta}(x-y)|^{2}\diff x\diff y\\ 
&+ \delta\int_{0}^{\tau} \| u_{\delta} \|_{W^{3,2}}^2 \diff t +\f{1}{\eps^{2}}\int_{0}^{\tau}\int_{\Td}\rho_{\delta}|\bu_{\delta}|^{2}\diff x\le \\
&\le \int_{\Td}\f{1}{2}{\rho^0_{\delta}}|{ \bu^{0}}|^{2}+F({\rho^0_{\delta}})\diff x+\f{1}{4\eta^{2}}\int_{\Td}\int_{\Td}\omega_{\eta}(y)|{\rho^0_{\delta}}(x)-{\rho^0_{\delta}}(x-y)|^{2}\diff x\diff y,
\end{split}
\end{equation}
 This concludes the proof of existence of $(\rho_{\delta},\bu_{\delta})$ satisfying~\eqref{eq:approx_delta_rho}--\eqref{eq:approx_delta_u}. 

\subsection{Existence of dissipative measure-valued solutions}
It remains to pass to the limit $\delta\to 0$ in \eqref{eq:approx_delta_rho}--\eqref{eq:approx_delta_u}. First we gather some uniform bounds in $\delta$, being a simple consequence of \eqref{eq:bounds_delta_1} and \eqref{est:energy_approx_after_limit_garlerkin}, in the following lemma: 
\begin{lem}\label{lem:estimates_just_delta}
Let $(\rho^{\delta},\bu^{\delta})$ be weak solutions of~\eqref{eq:approx_delta_rho}--\eqref{eq:approx_delta_u} as constructed above. Then, there exists a constant $C>0$ independent of $\delta$ such that
\begin{align*}
&\rho_{\delta}\ge 0\quad \text{a.e. in $(0,T)\times\Td$},\\
&\norm{\sqrt{\rho_{\delta}}\bu_{\delta}}_{L^{\infty}(0,T;L^{2}(\Td))}\le C,\quad \norm{F(\rho_{\delta})}_{L^{\infty}(0,T;L^{1}(\Td))}\le C,\quad \norm{\rho_{\delta}}_{L^{\infty}(0,T;L^{2}(\Td))}\le C,\\
&\delta\norm{\bu_{\delta}}_{L^{2}(0,T;W^{3,2}(\Td))}^{2}\le C,\\
&\norm{\p_{t}\rho_{\delta}}_{L^{2}(0,T;(W^{1,4}(\Td))')}\le C. 
\end{align*}
\end{lem}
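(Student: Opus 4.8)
The plan is to read off each bound directly from the two facts already at our disposal: the pointwise nonnegativity of $\rho_{\delta}$ from the method of characteristics, cf. \eqref{eq:rho_estimate_from_below}, and the $\delta$-uniform energy inequality \eqref{est:energy_approx_after_limit_garlerkin}. First, $\rho_{\delta} \geq 0$ a.e.\ in $(0,T)\times\Td$ is immediate since $\rho^N$ inherits a strictly positive lower bound from the transport formula and $\rho^N \to \rho_{\delta}$ a.e.\ (along a subsequence) as $N \to \infty$. Next, dropping the nonnegative terms $\delta \int_0^\tau \|u_\delta\|_{W^{3,2}}^2$, $\frac{1}{\eps^2}\int_0^\tau \int_{\Td} \rho_\delta |\bu_\delta|^2$, the nonlocal term (nonnegative by the same Cauchy--Schwarz/Young argument as in Lemma~\ref{lem:nonneg_the_energy_one}), and using $F \geq 0$ (write $F = F_1 + F_2$ and absorb the bounded part $F_2$), we get from the right-hand side of \eqref{est:energy_approx_after_limit_garlerkin} that $\int_{\Td} \tfrac12 \rho_\delta |\bu_\delta|^2 + F(\rho_\delta) \diff x$ is bounded uniformly in $\tau$ and $\delta$, provided the initial energy $E_{0,\delta}$ is uniformly bounded. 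This last point uses $\rho_{0,\delta} = \rho_0 + \delta$ with $\rho_0 \in C^1(\Td)$ and $\delta \in (0,1)$: each term in $E_{0,\delta}$ is then controlled by a constant depending only on $\rho_0$, $\bu_0$, $\eta$. This yields the three bounds $\|\sqrt{\rho_\delta}\bu_\delta\|_{L^\infty(0,T;L^2)} \leq C$, $\|F(\rho_\delta)\|_{L^\infty(0,T;L^1)} \leq C$, and $\delta \|\bu_\delta\|_{L^2(0,T;W^{3,2})}^2 \leq C$; the last one is literally the term we dropped, kept this time.

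For $\|\rho_\delta\|_{L^\infty(0,T;L^2)} \leq C$, I would use the $k$-growth lower bound on $F_1$ from Assumption~\ref{ass:potentialF}: $F_1(u) \geq \frac1C |u|^k - C$ with $k \geq 2$, hence $F(\rho_\delta) \geq \frac1C |\rho_\delta|^k - C - \|F_2\|_\infty$, so the $L^\infty(0,T;L^1)$ bound on $F(\rho_\delta)$ gives an $L^\infty(0,T;L^k)$ bound on $\rho_\delta$, and since $\Td$ has finite measure and $k \geq 2$, this controls the $L^\infty(0,T;L^2)$ norm.

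The only genuinely non-routine piece is the bound $\|\p_t \rho_\delta\|_{L^2(0,T;(W^{1,4}(\Td))')} \leq C$. This I would get from the weak formulation \eqref{eq:approx_delta_rho} of the continuity equation: for $\psi \in W^{1,4}(\Td)$,
\begin{equation*}
\left| \langle \p_t \rho_\delta, \psi \rangle \right| = \left| \frac{1}{\eps} \int_{\Td} \nabla \psi \cdot \rho_\delta \bu_\delta \diff x \right| \leq \frac{1}{\eps} \|\nabla \psi\|_{L^4} \, \|\rho_\delta \bu_\delta\|_{L^{4/3}},
\end{equation*}
so it suffices to bound $\rho_\delta \bu_\delta = \sqrt{\rho_\delta} \cdot \sqrt{\rho_\delta}\bu_\delta$ in $L^2(0,T;L^{4/3}(\Td))$. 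By Hölder in space, $\|\sqrt{\rho_\delta}\sqrt{\rho_\delta}\bu_\delta\|_{L^{4/3}} \leq \|\sqrt{\rho_\delta}\|_{L^4} \|\sqrt{\rho_\delta}\bu_\delta\|_{L^2} = \|\rho_\delta\|_{L^2}^{1/2} \|\sqrt{\rho_\delta}\bu_\delta\|_{L^2}$, and both factors are in $L^\infty(0,T)$ by the bounds already established; in particular $\rho_\delta \bu_\delta$ is even bounded in $L^\infty(0,T;L^{4/3}(\Td))$, which is stronger than needed. Taking the supremum over $\|\psi\|_{W^{1,4}} \leq 1$ and then the $L^2$-norm in time gives the claim. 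I expect this computation to be the ``main obstacle'' only in the bookkeeping sense — choosing the exponents so that $\rho_\delta$ appears to a power covered by the $L^2$ bound — but no new ingredient beyond the energy estimate and Assumption~\ref{ass:potentialF} is required. All the convergences needed to transfer these bounds from $(\rho^N,\bu^N)$ to $(\rho_\delta,\bu_\delta)$ are already asserted in Section~\ref{subsect:approximation}, so lower semicontinuity of norms under weak(-$*$) convergence closes each estimate.
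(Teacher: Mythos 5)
Your proposal is correct and follows essentially the route the paper intends: nonnegativity from the positive lower bound inherited from the characteristics formula, all the uniform bounds from the $\delta$-independent energy inequality \eqref{est:energy_approx_after_limit_garlerkin} together with the $k$-growth of $F_1$ (with $k\ge 2$) and the uniformly bounded initial energy for $\rho_{0,\delta}=\rho_0+\delta$, and the time-derivative estimate by duality from the continuity equation via $\|\rho_\delta\bu_\delta\|_{L^\infty(0,T;L^{4/3}(\Td))}\le \|\rho_\delta\|_{L^\infty(0,T;L^2)}^{1/2}\|\sqrt{\rho_\delta}\bu_\delta\|_{L^\infty(0,T;L^2)}$, which is exactly the bound the paper itself invokes later. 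No gaps.
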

{The last estimate is a consequence of the splitting
$$
\p_{t}\rho_{\delta}=-\f{1}{\eps}\DIV(\sqrt{\rho_{\delta}}\sqrt{\rho_{\delta}}\bu_{\delta})
$$
so that thanks to the uniform estimates on $\sqrt{\rho_{\delta}}$ in $L^{\infty}(0,T; L^4(\Td))$ and $\sqrt{\rho_{\delta}}\bu_{\delta}$ in $L^{\infty}(0,T; L^2((\Td))$ we have $\sqrt{\rho_{\delta}}\sqrt{\rho_{\delta}}\bu_{\delta} \in L^{\infty}(0,T; L^\frac{4}{3}(\Td))$}.\\

Now, the proof of the existence of dissipative measure-valued solution follows the method described at the beginning of Section \ref{sect:def_diss_mvs}. By Lemma \ref{lem:estimates_just_delta}, we have sufficient estimates to have convergence \eqref{eq:weak_comp_rhodelta_rhou} which allows us to define the Young measure $\{\nu_{t,x}\}$ as in \eqref{eq:YM_how_to_integrate}--\eqref{eq:id_wl_1}. Then, the representations formulas for weak limits of nonlinearities \eqref{eq:id_wl_2}--\eqref{eq:id_wl_5} are a consequence of Lemma \ref{prop:gen_YM} and the estimate on $\norm{\sqrt{\rho_{\delta}}\bu_{\delta}}_{L^{\infty}(0,T;L^{2}(\Td))}$ which guarantees that all of the considered quantities are at least in $L^{\infty}(0,T; L^1(\Td))$. Note that $m^{\rho \bu} = 0$ because we have a uniform bound $\| \rho_{\delta} \bu_{\delta}\|_{L^{\infty}(0,T; L^{\frac{4}{3}}(\Td))} \leq C$. Next, \eqref{eq:id_wl_8} follows from the estimate on $\norm{F(\rho_{\delta})}_{L^{\infty}(0,T;L^{1}(\Td))}$. Here, the measure $m^{F(\rho)}$ is nonnegative because $F=F_1 + F_2$ where $F_1 \geq 0$ while $F_2$ is bounded so that the only concentration effect can arise from $F_1$. Similarly, by Assumption \ref{ass:potentialF}, $\norm{\rho_{\delta}\,F'(\rho_{\delta})}_{L^{\infty}(0,T;L^{1}(\Td))} \leq C$ so that \eqref{eq:id_wl_9} follows. Finally, \eqref{eq:id_wl_10} is a consequence of the linearity and uniqueness of weak limits. This allows to pass to the limit $\delta\to0$ in almost all of the terms in formulation \eqref{eq:approx_delta_rho}--\eqref{eq:approx_delta_u}.\\

Concerning the regularizing term on the (RHS) of \eqref{eq:approx_delta_u}, we observe that
$$
\left|\delta\int_{0}^{T}(\!(\bu_{\delta},\phi)\!)\diff t\right| \leq
\delta\, \| \bu_{\delta}\|_{L^2((0,T);W^{3,2}(\Td))} \,  \|  \phi \|_{L^2((0,T);W^{3,2}(\Td))} \leq C \sqrt{\delta} \, \|  \phi \|_{L^2((0,T);W^{3,2}(\Td))} \to 0. 
$$
When it comes to the nonlocal terms, we observe that we can identify their weak limits because the convolution upgrades a weak convergence to the strong one. More precisely, if $\rho_{\delta} \weaks \rho$ in $L^{\infty}(0,T); L^2(\Td))$, then $\rho_{\delta} \ast \omega_{\eta} \to \rho \ast \omega_{\eta}$ in $L^p(0,T; L^p(\Td))$ strongly, for all $1 \leq p < \infty$. This follows by the Lions-Aubin lemma and a standard subsequence argument as the sequence $\{\rho_{\delta} \ast \omega_{\eta}\}_{\delta}$ has uniformly bounded derivatives in the spatial derivatives while its time derivative is bounded in some negative Sobolev space by Lemma \ref{lem:estimates_just_delta}. \\ 

Concerning \eqref{eq:inequality_2_conc_measures}, we notice that it is a consequence of the inequality
$$
\left| \lambda' \otimes \lambda' \right| = \left( \sum_{i,j=1}^d \left( \lambda_i' \lambda_j' \right)^2 \right)^{1/2} = \sum_{i=1}^d |\lambda_i'|^2 = |\lambda'|^2
$$
and Lemma \ref{prop:comparison_measure}. Similarly, \eqref{eq:inequality_2_conc_measures_func_p_F} follows from Proposition \ref{prop:comparison_measure} and inequality \eqref{eq:bound_notrel_p_F}. \\

Next, the constructed measure-valued solution is dissipative in the sense of Definition \ref{def:diss_mvs} because we can pass to the limit in \eqref{est:energy_approx_after_limit_garlerkin} using identified weak limits (rigorously, one multiplies \eqref{est:energy_approx_after_limit_garlerkin} with a nonnegative test function of time, passes to the limit and then performs a standard localization argument).\\

Finally, the constructed solution satisfies Poincaré inequality as in Definition \ref{def:admi_mvs}. Indeed, by Lemma \ref{lem:Poincaré_nonlocal_H1_L2} we have for all bounded and nonnegative $\varphi:[0,T] \to [0,\infty)$
$$
\int_0^T  \int_{\Td}\varphi(t)  |(\rho_{\delta} - \Rho) - \delta |^2 { \, \diff x\diff t}\leq  \frac{C_P}{4\eta^2}\, \int_0^T  \int_{\Td} \int_{\Td}  \varphi(t) {| (\rho_{\delta} - \Rho)(x) - (\rho_{\delta} - \Rho)(x-y)|^2} \omega_\eta(y) \diff x \diff y \diff t 
$$
because $(\rho_{\delta} - \Rho)_{\Td} = \delta$. The (LHS) can be written as
$$
\int_0^T  \int_{\Td} \varphi(t)  |(\rho_{\delta} - \Rho) - \delta |^2
 {\, \diff x\diff t}=
\int_0^T \int_{\Td} \varphi(t) \left((\rho_{\delta}- \Rho)^2 + \delta^2 - 2\, \delta \, (\rho_{\delta}- \Rho)\right) {\diff x\diff t}.
$$
As $\rho_{\delta} - \Rho$ is bounded in $L^{\infty}(0,T;L^2(\Td))$, the last two terms vanish in the limit $\delta \to 0$. Finally, the term $(\rho_{\delta}- \Rho)^2$ has weak limit $\overline{\rho^2} + \Rho^2 - 2\,\Rho \, \rho$ which is exactly $\overline{(\rho - \Rho)^2}$, cf. \eqref{eq:def_square_diff_densities}. Similarly, we consider the term on the (RHS) so that we obtain
$$
\int_0^T  \int_{\Td}\varphi(t)  \overline{|\rho - \Rho |^2}{ \, \diff x\diff t} \leq  \frac{C_P}{4\eta^2}\, \int_0^T \varphi(t) \int_{\Td} \overline{ \int_{\Td} \omega_\eta(y) | (\rho - \Rho)(x) - (\rho- \Rho)(x-y)|^2 \diff y} \diff x \diff t. 
$$
As this inequality holds for all $\varphi$, we conclude the proof.

\section{Classical solutions to the nonlocal Cahn-Hilliard equation}\label{sect:class_sol}

To prove the convergence of the measure-valued solution of the nonlocal Euler-Korteweg to a solution of the Cahn-Hilliard equation, we use arguments similar to weak-strong uniqueness. Therefore, we study below the classical solutions of the nonlocal Cahn-Hilliard equation.
More precisely, we consider the equation \eqref{eq:CHNL1}--\eqref{eq:CHNL2}. The initial condition is a smooth positive function, more precisely we consider for some $\alpha,\sigma>0$
\begin{equation}\label{eq:init_cond}
\rho(0,x)={\rho^0}(x), \quad {\rho^0}\in C^{2+\alpha}(\Td), \quad {\rho^0}(x)\ge\sigma\quad \forall x\in\Td.     
\end{equation}
We also suppose that $F \in C^4$ which is required by the parabolic regularity theory exploited in Lemma \ref{lem:class_sol_approx}. Equations~\eqref{eq:CHNL1}-\eqref{eq:CHNL2} can be rewritten as
\begin{equation}\label{eq:CHNL3}
\p_{t}\rho-\Delta(\phi(\rho))+\DIV(\rho\,b(\rho))=0,\quad \phi(\rho)=\f{\rho^2}{2\eta^{2}}+\int_0^{\rho}sF''(s)\diff s,\quad b(\rho)=\f{\nabla\omega_{\eta}\ast \rho}{\eta^{2}}.
\end{equation}

\begin{thm}\label{thm:class_sol_NLCH}
Equation~\eqref{eq:CHNL3} with initial condition $u_{0}$ satisfying~\eqref{eq:init_cond} admits a classical unique solution.
\end{thm}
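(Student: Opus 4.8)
The plan is to regard \eqref{eq:CHNL3} as a non-degenerate quasilinear parabolic equation perturbed by a \emph{smooth, lower-order, nonlocal drift}, to build a local solution by linearisation plus a fixed-point argument, and then to globalise via a maximum principle. The structural point to isolate first is that the drift is controlled by the conserved mass. Since \eqref{eq:CHNL3} is in divergence form, $\int_{\Td}\rho\diff x=\int_{\Td}\rho_0\diff x$ is conserved and $\rho$ stays nonnegative; writing $b(\rho)=\eta^{-2}\nabla\omega_\eta\ast\rho$ and $\DIV b(\rho)=\eta^{-2}\Delta\omega_\eta\ast\rho$, both are smooth functions of $\rho$ and, along any nonnegative solution, $|\DIV b(\rho)|\le C_0$ with $C_0=\eta^{-2}\|\Delta\omega_\eta\|_{L^\infty}\|\rho_0\|_{L^1}$. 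Moreover $\phi\in C^3$ since $F\in C^4$, and $\phi'(\rho)=\rho\,(\eta^{-2}+F''(\rho))\ge\rho\,(\eta^{-2}-\|F_2''\|_\infty)>0$ for $\rho>0$ (using $F_1''\ge 0$ and $\eta$ small enough), so \eqref{eq:CHNL3} is uniformly parabolic on any set where $\rho$ is bounded away from $0$ and $+\infty$.

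For local existence I would freeze the nonlinearity: given $\tau>0$ and $\bar\rho$ in a closed ball of $C^{\alpha,\alpha/2}([0,\tau]\times\Td)$ with $\bar\rho\ge\sigma/2$, solve the linear uniformly parabolic problem $\p_t\rho=\DIV(\phi'(\bar\rho)\nabla\rho)-\DIV(\rho\,b(\bar\rho))$ with $\rho(0,\cdot)=\rho_0$. Linear parabolic Schauder theory (see, e.g., \cite{MR1409366}) gives a unique $\rho\in C^{2+\alpha,1+\alpha/2}$, and for $\tau$ small this $\rho$ stays $\ge\sigma/2$ and inside the ball. The map $\bar\rho\mapsto\rho$ is continuous and, by the compact embedding $C^{2+\alpha,1+\alpha/2}\hookrightarrow\hookrightarrow C^{\alpha,\alpha/2}$, sends the ball into a relatively compact set, so Schauder's fixed-point theorem yields a local classical solution. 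Estimating the difference of two such maps in $C([0,\tau];L^2(\Td))$, using the Lipschitz bound on $\phi'$ over the relevant range and the boundedness of $b$, shows the map is a contraction for $\tau$ small, giving local uniqueness.

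To globalise I would prove a priori bounds by comparison. Writing \eqref{eq:CHNL3} in non-divergence form, the spatially constant functions $\underline\rho(t)=\sigma e^{-C_0 t}$ and $\overline\rho(t)=\|\rho_0\|_{L^\infty}e^{C_0 t}$ are respectively a sub- and a super-solution, the zeroth-order coefficient $\DIV b(\rho)$ being bounded by $C_0$; hence the linear parabolic maximum principle applied to $\rho-\underline\rho$ and to $\overline\rho-\rho$ gives $0<\sigma e^{-C_0 t}\le\rho(t,x)\le\|\rho_0\|_{L^\infty}e^{C_0 t}$ on any interval of existence. On $[0,T]$ this keeps $\phi'(\rho)$ in a fixed interval $[\lambda_0,\Lambda_0]$ with $\lambda_0>0$, so \eqref{eq:CHNL3} is uniformly parabolic with bounded coefficients; De Giorgi--Nash bounds $\|\rho\|_{C^{\alpha,\alpha/2}}$, and then parabolic Schauder estimates bound $\|\rho\|_{C^{2+\alpha,1+\alpha/2}}$, uniformly on $[0,T]$. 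Since these bounds do not deteriorate, the local solution extends to all of $[0,T]$, and $T$ being arbitrary we get a global classical solution.

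For global uniqueness, let $\rho_1,\rho_2$ be two solutions with datum $\rho_0$ and $w=\rho_1-\rho_2$. Testing $\p_t w=\Delta(\phi(\rho_1)-\phi(\rho_2))-\DIV(\rho_1 b(\rho_1)-\rho_2 b(\rho_2))$ with $w$, using $\nabla(\phi(\rho_1)-\phi(\rho_2))=\phi'(\rho_1)\nabla w+(\phi'(\rho_1)-\phi'(\rho_2))\nabla\rho_2$, the lower bound $\phi'(\rho_1)\ge\lambda_0>0$, the Lipschitz bound on $\phi'$, the bound $\|\nabla\rho_2\|_{L^\infty}\le C$, the splitting $\rho_1 b(\rho_1)-\rho_2 b(\rho_2)=\rho_1\,b(w)+w\,b(\rho_2)$ and the linearity and $L^2$-boundedness of $w\mapsto b(w)$, one absorbs all gradient terms into $\lambda_0\|\nabla w\|_{L^2}^2$ and arrives at $\frac{d}{dt}\|w\|_{L^2(\Td)}^2\le C\|w\|_{L^2(\Td)}^2$; since $w(0)=0$, Grönwall forces $w\equiv0$. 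The main obstacle is not any single step — each is classical — but the globalisation: it works precisely because conservation of mass together with positivity turns the nonlocal drift into a uniformly bounded lower-order term, so that the maximum principle, hence uniform parabolicity on finite time intervals, is available; this is exactly what fails for the local Cahn--Hilliard equation.
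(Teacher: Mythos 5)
Your argument is essentially correct, but it follows a genuinely different route from the paper. The paper never linearises: it truncates the mobility by $T_{\delta}$ as in \eqref{eq:phi_delta}, so that \eqref{eq:CHNL3_approx} is a uniformly non-degenerate porous-medium-type equation for \emph{all} values of $\rho$, takes the existence of classical solutions to this truncated problem from \cite{MR4574535}, proves the lower bound $\rho\ge\underline{\rho}(t)$ by an $L^{1}$ sign-function (Kato-type) argument — multiplying the equation for $\rho-\underline{\rho}$ by $\sgn^{-}$ and using the appendix inequality $\int_{\Td}\Delta\phi_{\delta}(\rho)\,\sgn^{-}\ge 0$ — and then observes that the uniform positivity makes $T_{\delta}(\rho)=\rho$ for $\delta$ small, removing the truncation (Lemma \ref{lem:class_sol_approx}); uniqueness is obtained via an $L^{1}$-contraction adapted from \cite[Proposition 3.5]{MR2286292}. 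You instead work directly with the untruncated equation: local existence by freezing the coefficients and a Schauder fixed point, positivity and the upper bound by comparison with the explicit spatially constant sub/supersolutions built from the bound $\|\DIV b(\rho)\|_{\infty}\le \eta^{-2}\|\Delta\omega_{\eta}\|_{\infty}\|\rho_{0}\|_{L^{1}}$, continuation via De Giorgi--Nash plus parabolic Schauder estimates, and uniqueness by an $L^{2}$ energy/Gr\"onwall estimate exploiting the linearity and $L^{2}$-boundedness of $\rho\mapsto b(\rho)$. Both proofs pivot on the same structural observation you state at the end (the nonlocal drift is a smooth lower-order term controlled by the conserved mass, so positivity and uniform parabolicity survive on finite time intervals); the paper's truncation buys it a ready-made existence theorem and avoids any continuation argument, while your route is self-contained at the level of classical quasilinear parabolic theory and yields the quantitative two-sided bound $\sigma e^{-C_{0}t}\le\rho\le\|\rho_{0}\|_{\infty}e^{C_{0}t}$ directly for \eqref{eq:CHNL3}.

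Two technical points you should tighten, though neither is a genuine gap. First, for the frozen linear problem $\p_{t}\rho=\DIV(\phi'(\bar\rho)\nabla\rho)-\DIV(\rho\,b(\bar\rho))$, Schauder theory does not give $C^{2+\alpha,1+\alpha/2}$ solutions when $\bar\rho$ is only $C^{\alpha,\alpha/2}$ (the non-divergence form needs $\nabla\phi'(\bar\rho)$ H\"older); run the fixed point in a ball of $C^{1+\alpha,(1+\alpha)/2}$ (using that $\rho_{0}\in C^{2+\alpha}$ by \eqref{eq:init_cond}) or bootstrap with divergence-form De Giorgi--Nash estimates first. Second, the bound $|\DIV b(\rho)|\le C_{0}$ uses nonnegativity and mass conservation, while nonnegativity is itself obtained from the comparison argument that uses $C_{0}$; this apparent circularity is resolved by the standard continuation argument on the maximal interval where $\rho>0$ (nonempty since $\rho_{0}\ge\sigma$), which you should state explicitly.
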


To prove this theorem we first consider an approximate problem and we define $T_{\delta}$ a smooth function such that
\begin{align*}
T_{\delta}(0)=\f{\delta}{2}, \quad T_{\delta}(\rho)=\rho\text{ if ${\rho}\ge\delta$}, \quad T_{\delta}\text{ is increasing}. 
\end{align*}
The plan is to approximate \eqref{eq:CHNL1} with 
\begin{equation}\label{eq:C-H-mobility-apprx}
\p_{t}\rho=\DIV(T_{\delta}(\rho)\nabla\mu).    
\end{equation}
We also define
\begin{equation}\label{eq:phi_delta}
 \phi_{\delta}(\rho):=\int_{0}^\rho\f{T_{\delta}(s)}{\eta^{2}}\diff s+\int_{0}^{\rho}T_{\delta}(s)F''(s)\diff s=\int_{0}^{\rho}T_{\delta}(s)\left(\f{1}{\eta^{2}}+F''(s)\right)\diff s   
\end{equation}
so that equation~\eqref{eq:C-H-mobility-apprx} can be rewritten as a porous media equation
\begin{equation}\label{eq:CHNL3_approx}
\p_{t}\rho-\Delta(\phi_{\delta}(\rho))+\DIV(\rho\,b(\rho))=0\quad \rho(0,x)={\rho^0}(x).
\end{equation}
From the properties of $F$ {and for $\eta$ small enough} we note that $\phi_{\delta}\ge 0$ and $\phi_{\delta}'\ge 0$.  

\begin{lem}[existence]\label{lem:class_sol_approx}
There exists a classical solution to \eqref{eq:CHNL3_approx}. Moreover, the solution obeys the maximum principle 
\begin{equation*}
\underline{\rho}(t):= \sigma\exp\left(-\int_{0}^{t}\norm{\DIV b(\rho)}_{L^{\infty}}(s)\diff s\right) \leq \rho(t,x)\le{\max_{x\in\Td}\rho^{0}(x)}\exp\left(\int_{0}^{t}\norm{\DIV b(\rho)}_{L^{\infty}}(s)\diff s\right).
\end{equation*}
\end{lem}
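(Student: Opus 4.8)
The plan is to solve the approximate problem \eqref{eq:CHNL3_approx} by a fixed-point argument built on linear parabolic theory, and then to obtain the maximum principle by the method of characteristics for the transport part, exploiting the fact that $\phi_\delta$ is a genuinely nondegenerate diffusion (since $T_\delta(s)>0$ for all $s$, one has $\phi_\delta' \ge c_\delta>0$ away from $0$ and $\phi_\delta'\ge \delta/(2\eta^2)$ near $0$, so the equation is uniformly parabolic with coefficients that are as smooth as $F$ allows). First I would set up the fixed-point map $\Phi: \bar\rho \mapsto \rho$ on a suitable ball in, say, $C^{0}([0,T_*]; C^{\alpha}(\Td))$ (or a parabolic Hölder space $C^{\alpha,\alpha/2}$), where $\rho$ solves the \emph{linear} problem $\p_t \rho - \DIV(\phi_\delta'(\bar\rho)\nabla\rho) + \DIV(\rho\, b(\bar\rho)) = 0$ with $\rho(0,\cdot)=\rho_0$. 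The key point is that $b(\bar\rho) = \nabla\omega_\eta \ast \bar\rho/\eta^2$ depends on $\bar\rho$ only through convolution with the smooth kernel $\nabla\omega_\eta$, so $b(\bar\rho)$ and $\DIV b(\bar\rho)$ are bounded in every $C^k$ by $\|\bar\rho\|_{L^1}$ alone — this is the structural feature that makes the nonlocal term harmless and is exactly why one passes through the nonlocal equation in the first place.

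The key steps, in order, would be: (i) linear parabolic existence and Schauder estimates (Ladyzhenskaya–Solonnikov–Uraltseva) for the frozen-coefficient problem, giving $\rho \in C^{2+\alpha,1+\alpha/2}$ with a bound depending on $\|\bar\rho\|$ through the coefficients; (ii) a priori $L^\infty$ bounds on $\rho$ independent of the fixed-point iteration — here is where the maximum principle enters: writing the equation as $\p_t\rho - \phi_\delta'(\bar\rho)\Delta\rho - \nabla\phi_\delta'(\bar\rho)\cdot\nabla\rho + b(\bar\rho)\cdot\nabla\rho + \rho\,\DIV b(\bar\rho) = 0$, the comparison principle against the spatially-constant super/sub-solutions $\sigma \exp(\pm\int_0^t \|\DIV b(\bar\rho)\|_{L^\infty})$ gives the stated two-sided bound (with $\bar\rho$ in place of $\rho$ inside the exponential at the iteration level, then closed up at the fixed point); (iii) continuity/compactness of $\Phi$ so that Schauder's (or Banach's, on a short interval) fixed-point theorem applies, producing a solution on a short time interval $[0,T_*]$; (iv) since the a priori bounds from step (ii) are uniform in time once we control $\|\DIV b(\rho)\|_{L^\infty} \le C\|\rho\|_{L^1} \le C\|\rho_0\|_{L^1}$ (mass is conserved because \eqref{eq:CHNL3_approx} is in divergence form), the solution extends to arbitrary $[0,T]$ by a standard continuation argument; (v) bootstrap regularity using $F\in C^4$ and parabolic Schauder theory to upgrade $\rho$ to a classical solution.

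The main obstacle I expect is \textbf{closing the fixed-point argument cleanly in the presence of the quasilinear diffusion $\phi_\delta'(\rho)$} — i.e. ensuring the frozen coefficients $\phi_\delta'(\bar\rho)$ stay in a fixed Hölder ball with a fixed ellipticity constant, uniformly along the iteration, which requires the a priori $L^\infty$ bound of step (ii) to be available \emph{before} the fixed point is taken, and hence to be proved for the linear problem with the correct (self-referential, but monotone) exponential factor. This is a routine but delicate bookkeeping issue, resolved exactly because $\DIV b$ is controlled by the conserved mass and the maximum principle is order-preserving; once the uniform bounds are in hand, the rest is classical parabolic theory. Uniqueness (needed for Theorem \ref{thm:class_sol_NLCH} but not asserted here) would follow by a Grönwall estimate on the difference of two solutions, again using the Lipschitz dependence of $\phi_\delta$ and the smoothing of $b$.
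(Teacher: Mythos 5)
Your proposal is essentially correct, but it follows a genuinely different route from the paper. The paper does not construct the solution of \eqref{eq:CHNL3_approx} at all: existence is simply quoted from \cite{MR4574535}, and the two-sided bound is proved by an integral (Kato-type) argument rather than a pointwise comparison principle. Concretely, the paper sets $w=\rho-\underline{\rho}$, multiplies the equation for $w$ by $\sgn^{-}(w)$, and uses the appendix lemma $\int_{\Td}\Delta\phi_{\delta}(\rho)\,\sgn^{-}(w)\ge 0$ (proved by testing with concave approximations of $\min\{x,0\}$, which needs only $\phi_{\delta}'\ge 0$) to conclude $\partial_t\int_{\Td}|w^{-}|\le 0$, hence $w^{-}\equiv 0$; the upper bound is analogous. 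What the paper's route buys is robustness: it never invokes uniform parabolicity or classical regularity of the solution, so it tolerates degenerate diffusion, at the price of outsourcing existence. Your route is self-contained (frozen-coefficient Schauder theory, a fixed point on a convex set of densities with prescribed mass and the two-sided exponential bounds, comparison with the spatially constant super/sub-solutions, continuation via the conserved mass bound on $\|\DIV b\|_{L^\infty}$), and the comparison step is sound since the constant barriers are indeed sub/super-solutions of the frozen problem and the bound closes at the fixed point. Two caveats: your claim of uniform ellipticity $\phi_{\delta}'\ge c_{\delta}>0$ tacitly requires $\tfrac{1}{\eta^{2}}+F''>0$ on the relevant range (the same condition underlying the paper's assertion $\phi_{\delta}'\ge 0$, guaranteed by $\|F_2''\|_\infty$ bounded and $\eta$ small), so it should be stated; and the paper's proof ends by observing that the uniform lower bound makes $T_{\delta}(\rho)=\rho$ for $\delta$ small, which is how the truncation is removed for Theorem \ref{thm:class_sol_NLCH} --- your write-up omits this, though it is not part of the lemma's statement. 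Your flagged ``main obstacle'' (keeping ellipticity and the Hölder ball uniform along the iteration) is real but standard, and is resolved exactly as you indicate.
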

\begin{rem}
We can apply the estimate from Lemma \ref{lem:class_sol_approx} in our case because 
$$
\|\DIV b(\rho(s,x))\|_{L^{\infty}} = \frac{1}{\eta^2} \|\Delta \omega_{\eta} \ast \rho(s,\cdot) \|_{L^{\infty}} \leq   \frac{1}{\eta^2}\, \|\Delta \omega_{\eta}\|_{L^{\infty}} \, \|\rho(s,\cdot)\|_{L^1} = C(\eta)\, \|\rho^0\|_{L^1} < \infty,
$$ 
as we deal with nonnegative solutions to \eqref{eq:CHNL3} and the mass is conserved.
\end{rem}
\begin{proof}[Proof of Lemma \ref{lem:class_sol_approx}]
The existence follows from~\cite{MR4574535}. To prove the maximum principle, we denote $w=\rho-\underline{\rho}$ so that
\begin{equation*}
\p_{t}w-\Delta(\phi_{\delta}(\rho))+\DIV(w\,b(\rho))+\underline{\rho}(\DIV(b(\rho))-\norm{\DIV(b(\rho))}_{L^{\infty}})=0, \quad w(0,x)={\rho^0}(x)-\underline{\rho}\ge 0.    
\end{equation*}
We multiply this equation by $\sgn^{-}(w):=\begin{cases}-1 \text{ if $w< 0$}\\ 0 \text{ if $w\ge 0$.}\end{cases}$. We obtain, with $w^{-}=\min\{w,0\}$, $|w^{-}|=-\min\{w,0\}$.
\begin{equation*}
\p_{t}|w^{-}|+\Delta(\phi_{\delta}(\rho))\sgn^{-}(w)+\DIV(|w^{-}|\,b(\rho))\le 0. 
\end{equation*}

Therefore integrating in space and using the inequality
\begin{equation*}
\int_{\Td}\Delta\phi_{\delta}(\rho)\sgn^{-}(w) { \, \diff x}\ge 0,
\end{equation*}
we obtain 
\begin{equation*}
\p_{t}\int_{\Td}|w^{-}| { \, \diff x} \le 0. 
\end{equation*}
Using the initial condition we conclude $|w^{-}|=0.$\\

Since the solutions to \eqref{eq:C-H-mobility-apprx} satisfy uniform lower bound, we obtain $T_{\delta}(\rho)=\rho$ for sufficiently small $\delta$ and thus classical solutions of Theorem~\ref{thm:class_sol_NLCH}.
\end{proof}

\begin{lem}[uniqueness]
Classical nonnegative solutions to \eqref{eq:CHNL3} are unique.
\end{lem}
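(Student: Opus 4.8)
The plan is to argue by a standard relative-entropy / energy estimate for two classical nonnegative solutions $\rho_1$, $\rho_2$ of \eqref{eq:CHNL3} sharing the same initial datum. Set $w := \rho_1 - \rho_2$. Since both solutions are classical and (by Lemma \ref{lem:class_sol_approx}, applied once $\delta$ is taken small enough that $T_\delta(\rho_i) = \rho_i$) strictly positive on any finite time interval $[0,T]$, all the nonlinear terms are smooth functions of bounded arguments, so $\phi(\rho_i)$, $b(\rho_i) = \eta^{-2}\nabla\omega_\eta \ast \rho_i$ and their derivatives are controlled in $L^\infty((0,T)\times\Td)$. I would subtract the two equations to get
\begin{equation*}
\p_t w - \Delta\bigl(\phi(\rho_1) - \phi(\rho_2)\bigr) + \DIV\bigl(\rho_1 b(\rho_1) - \rho_2 b(\rho_2)\bigr) = 0 .
\end{equation*}

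First I would test this against $w$ itself (or, to keep the diffusion term manifestly good, against $\phi(\rho_1)-\phi(\rho_2)$, but testing with $w$ is cleaner for closing the Gronwall loop). The key point is the diffusion term: writing $\phi(\rho_1)-\phi(\rho_2) = a(t,x)\, w$ with $a := \int_0^1 \phi'(\theta\rho_1 + (1-\theta)\rho_2)\diff\theta$, and noting $\phi' = \eta^{-2}\rho + \rho F''(\rho) \geq c > 0$ on the relevant range by positivity of $\rho_i$ and the growth of $F$ (here the convexity of $F_1$ and boundedness of $F_2''$ are used, exactly as in Assumption \ref{ass:potentialF}), one obtains from $-\int_\Td \Delta(\phi(\rho_1)-\phi(\rho_2))\, w = \int_\Td \nabla(aw)\cdot\nabla w$ a coercive contribution $\int_\Td a |\nabla w|^2$ up to a term $\int_\Td w\, \nabla a \cdot \nabla w$ which is absorbed using $\|\nabla a\|_\infty \lesssim \|\nabla \rho_1\|_\infty + \|\nabla\rho_2\|_\infty$ and Young's inequality. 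The transport term is split as $\rho_1 b(\rho_1) - \rho_2 b(\rho_2) = w\, b(\rho_1) + \rho_2(b(\rho_1)-b(\rho_2))$; since $b(\rho_1)-b(\rho_2) = \eta^{-2}\nabla\omega_\eta \ast w$ is controlled by $\|\nabla\omega_\eta\|_{L^1}\|w\|_{L^2}$ (this is where nonlocality pays off — no derivative is lost), and $b(\rho_1)$, $\rho_2$ are bounded in $L^\infty$, the resulting terms are bounded by $C\|w\|_{L^2}^2 + \tfrac{c}{2}\int_\Td |\nabla w|^2$, the gradient part being absorbed by the diffusion.

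Putting these together yields $\frac{\diff}{\diff t}\|w(t)\|_{L^2(\Td)}^2 \leq C(T)\,\|w(t)\|_{L^2(\Td)}^2$ with $C(T)$ depending only on the $L^\infty$ and $W^{1,\infty}$ norms of $\rho_1,\rho_2$ on $[0,T]$ and on $\eta$. Since $w(0) = 0$, Gronwall's inequality gives $w \equiv 0$ on $[0,T]$, and as $T$ is arbitrary, $\rho_1 = \rho_2$. The main obstacle — and the only place requiring care — is controlling the cross term $\int_\Td w\,\nabla a\cdot\nabla w$ coming from the space-dependence of the effective diffusivity $a$: one must be sure that $\nabla a$ is genuinely bounded, which relies on the classical ($C^1$ in space) regularity of both solutions and on the lower bound $\rho_i \geq \underline{\rho}(t) > 0$ keeping $\phi'$ away from any singular behaviour; everything else is routine once positivity and smoothness are in hand.
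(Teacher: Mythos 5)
Your route is genuinely different from the paper's: you run an $L^2$ energy/Gronwall estimate built on the coercive effective diffusivity $a=\int_0^1\phi'(\theta\rho_1+(1-\theta)\rho_2)\,\diff\theta$, whereas the paper proves an $L^1$ contraction in the Kruzhkov style, multiplying the equation for $w=\rho_1-\rho_2$ by an approximation of $\mathds{1}_{\{u>0\}}$ evaluated at $\phi(\rho_1)-\phi(\rho_2)$, using only that $\phi$ is nondecreasing and that $b$ is a smooth convolution, and closing a Gronwall inequality for $\int_{\Td}|w|$. In the regime where both solutions are bounded away from zero your computation is sound: $a\ge c>0$ on the compact range of values, $\nabla a\in L^\infty$ by the classical ($C^1$ in space) regularity, the cross term $\int_{\Td} w\,\nabla a\cdot\nabla w$ and the transport terms are absorbed exactly as you say (the nonlocal $b$ indeed costs no derivative), and Gronwall applies since $w(0)=0$. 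Note only that $a\ge c>0$ also requires $\eta^{-2}+F''\ge c'>0$ on that range, i.e.\ a smallness condition $\eta^2\,\|F_2''\|_\infty<1$ (since $F_1$ is convex and $F_2''$ is merely bounded); this is the same implicit smallness of $\eta$ that underlies the paper's claim $\phi_\delta'\ge 0$.

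The genuine gap is that the lemma is stated for classical \emph{nonnegative} solutions, and your argument breaks down where the density vanishes: $\phi'(\rho)=\rho\left(\eta^{-2}+F''(\rho)\right)$ degenerates at $\rho=0$, so $a$ is no longer bounded below, the coercivity of $\int_{\Td} a|\nabla w|^2$ is lost, and the term $\int_{\Td} w\,\nabla a\cdot\nabla w$ (as well as the transport terms containing $\nabla w$) can no longer be absorbed --- a weighted Young inequality would produce $\int_{\Td} |\nabla a|^2 w^2/a$, which is uncontrolled near the degeneracy. Your appeal to Lemma \ref{lem:class_sol_approx} does not close this: that lemma provides the lower bound for the solution \emph{constructed} through the $T_\delta$-approximation from the initial datum \eqref{eq:init_cond} with $\rho_0\ge\sigma>0$; it says nothing about an arbitrary classical nonnegative solution of \eqref{eq:CHNL3} appearing in the uniqueness statement. (For strictly positive initial data one can rerun the maximum-principle argument of Lemma \ref{lem:class_sol_approx} directly on \eqref{eq:CHNL3} to get $\rho_i\ge\underline{\rho}>0$ on $[0,T]$, and then your proof does give uniqueness in that class, which is all that Theorem \ref{thm:class_sol_NLCH} actually needs; but for merely nonnegative data your method yields nothing.) The paper's $L^1$ contraction avoids the issue entirely: it never divides by, nor needs a positive lower bound on, $\phi'$ or $\rho$, so it covers the degenerate case exactly as stated.
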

\begin{proof}
We want to adapt usual $L^1$ contraction principle \cite[Proposition 3.5]{MR2286292} to the case with additional continuity equation term. Let $\rho_1$, $\rho_2$ be solutions to \eqref{eq:CHNL3_approx} and let $w = \rho_1 - \rho_2$. Equation for $w$ reads
$$
\partial_t w - \Delta (\phi(\rho_1) - \phi(\rho_2))+ \mbox{div}(\rho_1\,b(\rho_1) - \rho_2\,b(\rho_2)) = 0.
$$
We multiply this equation by $p_{\varepsilon}(\phi(\rho_1) - \phi(\rho_2))$ where $p_{\varepsilon}$ approximates $p(u) = \mathds{1}_{u > 0}$ and $p_{\varepsilon}' \geq 0$. Then,
$$
\int_{\Td} \Delta(\phi(\rho_1) - \phi(\rho_2)) \, p_{\varepsilon}(\phi(\rho_1) - \phi(\rho_2)) \diff x = - \int_{\Td} p_{\varepsilon}' \, |\nabla(\phi(\rho_1) - \phi(\rho_2))|^2 \diff x \leq 0.
$$
Concerning the other terms we notice that after sending $\varepsilon \to 0$ we obtain $p(\phi(\rho_1) - \phi(\rho_2)) = p(\rho_1 - \rho_2)$ by monotonicity of $\phi$. Therefore,
$$
\int_{\Td} \partial_t w \, p(\rho_1 - \rho_2) \diff x = \partial_t \int_{\Td} |w|_+ \diff x.
$$
Now, we split the divergence term into two parts:
$$
\mbox{div}(\rho_1\,b(\rho_1) - \rho_2\,b(\rho_2)) = \left[\rho_1\, \mbox{div}b(\rho_1) - \rho_2\, \mbox{div}b(\rho_2)\right]  + \left[\nabla \rho_1 \, b(\rho_1) - \nabla \rho_2 \, b(\rho_2)\right] = A+B.
$$
The term $A$ can be estimated in $L^1(\Td)$ with
\begin{align*}
\|A\|_{1} &\leq \|\rho_1\, \mbox{div}b(\rho_1) - \rho_2\, \mbox{div}b(\rho_1)\|_{1} +  \|\rho_2\, \mbox{div}b(\rho_1) -  \rho_2\, \mbox{div}b(\rho_2)\|_{1}\\
&\leq \|\rho_1 - \rho_2\|_{1} \, \|\mbox{div}b(\rho_1)\|_{\infty} + \frac{1}{\eta^2} \|\rho_2\|_{1} \, \|D^2\omega_{\eta}\|_{\infty} \|\rho_1 - \rho_2\|_{1}\\
&\leq \frac{\|D^2\omega_{\eta}\|_{\infty}}{\eta^2}(\|\rho_1\|_{1} + \|\rho_2\|_{1}) \, \|\rho_1 - \rho_2\|_{1} 
\end{align*}
where we used Young's convolutional inequality. Therefore, 
$$
\int_{\Td} p \, A \diff x \leq \|p\, A \|_{1} \leq \frac{\|D^2\omega_{\eta}\|_{\infty}}{\eta^2}(\|\rho_1\|_{1} + \|\rho_2\|_{1}) \, \|\rho_1 - 
\rho_2\|_{1}.
$$
where we denoted for simplicity $p = p(\rho_1 - \rho_2)$. Concerning term $B$ we write similarly
$$
B  = \left(\nabla \rho_1 \, b(\rho_1) -\nabla \rho_2 \, b(\rho_1)\right) + \left(\nabla \rho_2 \, b(\rho_1) - \nabla \rho_2 \, b(\rho_2)\right) =: B_1 + B_2.
$$
As above, we easily obtain
$$
\|B_2\|_{1} \leq \frac{\|\nabla\omega_{\eta}\|_{\infty}}{\eta^2} \, \|\nabla \rho_2\|_{1}\, \|\rho_1 - \rho_2\|_{1}, \quad \int_{\Td} p\,B_2 \diff x \leq \frac{\|\nabla\omega_{\eta}\|_{\infty}}{\eta^2} \, \|\nabla \rho_2\|_{1}\, \|\rho_1 - \rho_2\|_{1}.
$$
The term $B_1$ is more tricky. Keeping in mind that everything is multiplied by $p(\rho_1-\rho_2)$ we have
\begin{multline*}
\int_{\Td} \left(\nabla \rho_1 -\nabla \rho_2 \right) \, p(\rho_1 - \rho_2)  \, b(\rho_1) \diff x = \int_{\Td} \nabla |\rho_1 - \rho_2|_+ \, b(\rho_1) \diff x =\\= - \int_{\Td} |\rho_1 - \rho_2|_+ \mbox{div} b(\rho_1) \diff x 
\leq \|\rho_1 - \rho_2\|_{1} \, \|\rho_1\|_{1} \,\frac{\|D^2\omega_{\eta}\|_{\infty}}{\eta^2}.
\end{multline*}
We conclude that for some constant $C$ depending on $L^1$ norms of $\rho_1$, $\rho_2$ and $\nabla \rho_2$ we have
$$
\partial_t \int_{\Td} |\rho_1 - \rho_2|_+ \diff x \leq {C}\int_{\Td} |\rho_1 - \rho_2| \diff x.
$$
Replacing $\rho_1$ and $\rho_2$ we obtain
$$
\partial_t \int_{\Td} |\rho_1 - \rho_2| \diff x \leq {C}\int_{\Td} |\rho_1 - \rho_2| \diff x.
$$
so that we conclude $\rho_1 = 
\rho_2$.
\end{proof}

\section{Convergence of nonlocal Euler-Korteweg to nonlocal Cahn-Hilliard}\label{sect:conv_rel_entr}

To prove convergence of nonlocal Euler-Korteweg equation to the nonlocal Cahn-Hilliard equation, we first rewrite the latter as a nonlocal Euler-Korteweg equation with an error term:
\begin{align}
&\p_{t}\Rho+\f{1}{\eps}\DIV(\Rho\Bu)=0,\label{eq:EKCH1}\\
&\p_{t}(\Rho\Bu)+\f{1}{\eps}\DIV\left(\Rho\Bu\otimes\Bu\right)=-\f{1}{\eps^{2}} \Rho\Bu-\f{1}{\eps}\Rho\nabla(F'(\Rho)+B_{\eta}[\Rho])+e(\Rho,\Bu).\label{eq:EKCH2}
\end{align}
%eq:EKCH1 eq:defBu
Here, velocity $\Bu$ is given by
\begin{equation}\label{eq:defBu}
\Bu=-\eps\nabla(F'(\Rho)-B_{\eta}(\Rho))     
\end{equation}
and $\Rho$ is the solution of the nonlocal Cahn-Hilliard equation. The error term is given by 
\begin{align*}
e(\Rho,\Bu)&=\p_{t}(\Rho\Bu)+\f{1}{\eps}\DIV\left(\Rho\Bu\otimes\Bu\right)\\
&=\eps\DIV(\Rho\nabla(F'(\Rho)+B_{\eta}[\Rho]))\otimes\nabla(F'(\Rho)+B_{\eta}[\Rho])))-\eps\p_{t}(\Rho\nabla(F'(\Rho)+B_{\eta}[\Rho]))).     
\end{align*}
Finally, given strong solution $(P,\Bu)$ and measure-valued solution represented by $(\rho, \overline{\sqrt{\rho}\bu}, \nu,m)$ we define the relative entropy as 
\begin{equation}\label{eq:relative_entropy}
\Theta(t)=\int_{\Td}\f{1}{2}\overline{\rho|\bu-\Bu|^{2}}+\overline{F(\rho|\Rho)}\diff x+\f{1}{4\eta^{2}}\int_{\Td}\overline{\int_{\Td}\omega_{\eta}(y)|(\rho-\Rho)(x)-(\rho-\Rho)(x-y)|^{2}\diff y} \diff x.     
\end{equation}
where nonlinearity $F(\rho|\Rho)$ is defined in \eqref{eq:F(rho-Rho)} and measure-valued terms are defined by \eqref{eq:def_rel_kin_energy}, \eqref{eq:nonlocal_term_with_P} and \eqref{eq:def_rel_potenial_rho_Rho}. The main result reads:
\begin{thm}\label{thm:rel_entr_estimate}
Let $(\rho, \overline{\sqrt{\rho}\bu}, \nu, m)$ be a dissipative measure valued solution of \eqref{eq:EK1}--\eqref{eq:EK2} satisfying Poincaré inequality \eqref{eq:nonloc_Poinc_stronger_bar} and let $(\Rho,\Bu)$ be classical solutions of \eqref{eq:EKCH1}--\eqref{eq:EKCH2}. Then, for a constant independent of $\varepsilon$ and $\eta$ we have
\begin{equation}\label{est:relative_entropy_4}
\Theta(t)\le \left(\Theta(0)+\eps^{4} C(\|\Rho\|_{C^{2,1}}) \, \left\| \frac{1}{\Rho}  \right\|_{\infty}^2 \right) \, e^{TC(\|\Rho\|_{C^{2,1}})/\eta^{d+3}}.   
\end{equation}
\end{thm}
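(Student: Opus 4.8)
The plan is to run the relative entropy method. First we expand the three brackets defining $\Theta$ via \eqref{eq:def_rel_kin_energy}, \eqref{eq:def_rel_potenial_rho_Rho} and \eqref{eq:nonlocal_term_with_P}, writing $\Theta(t)=E_{mvs}(t)+\mathcal{L}(t)$ where $\mathcal{L}(t)$ collects all the terms that are \emph{linear} in the measure-valued quantities $\rho=\langle\lambda_1,\nu_{t,x}\rangle$ and $\overline{\rho\bu}$, with coefficients depending only on $(\Rho,\Bu)$, together with the purely $(\Rho,\Bu)$-terms. For the first piece we invoke the dissipation inequality \eqref{EEmvsenergy} in the form $E_{mvs}(t)+\frac{1}{\eps^{2}}\int_{0}^{t}\int_{\Td}\overline{\rho|\bu|^{2}}\diff x\,\diff s\le E_0$, so that a dissipation term is available on the left-hand side. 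For $\mathcal{L}$ we compute $\frac{\diff}{\diff t}\mathcal{L}(t)$: every occurrence of $\rho$ or $\overline{\rho\bu}$ is differentiated by inserting a suitable test function into the weak formulations \eqref{eq:weak_sol_EK_approx_eq1}--\eqref{eq:weak_sol_EK_approx_eq2} (schematically $\phi=\Bu$ for the cross kinetic term, and $\psi$ built out of $F'(\Rho)$, $B_{\eta}[\Rho]$ and $\tfrac12|\Bu|^{2}$ for the potential and nonlocal cross terms), while the coefficients $(\Rho,\Bu)$ and the purely $(\Rho,\Bu)$-terms are differentiated using the classical equations \eqref{eq:EKCH1}--\eqref{eq:EKCH2}. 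A standard density argument in time takes care of the fact that the test functions in \eqref{eq:weak_sol_EK_approx_eq1}--\eqref{eq:weak_sol_EK_approx_eq2} are required to be compactly supported in $[0,T)$.

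When recovering $\partial_t\Bu$ one has to divide \eqref{eq:EKCH2} by $\Rho$, which is legitimate since $\Rho$ is a classical solution that stays strictly positive by the maximum principle of Lemma \ref{lem:class_sol_approx}; this is where $\|1/\Rho\|_\infty$ enters, and the error term surfaces as $-\int_{\Td}\tfrac1\Rho\big(\overline{\rho\bu}-\rho\Bu\big)\cdot e(\Rho,\Bu)\diff x$, because the $\rho\Bu$-contributions produced by differentiating $-\int\overline{\rho\bu}\cdot\Bu$ and $\tfrac12\int\rho|\Bu|^{2}$ cancel, leaving only the fluctuation $\overline{\rho\bu}-\rho\Bu=\langle\sqrt{\lambda_1}(\lambda'-\sqrt{\lambda_1}\Bu),\nu_{t,x}\rangle$. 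All remaining $\mathcal{O}(1)$ terms are expected to cancel by the design of the method, the friction and capillary contributions recombining into the relative dissipation $\frac{1}{\eps^{2}}\int\overline{\rho|\bu-\Bu|^{2}}$; the nonlocal cross terms are rearranged using the self-adjointness of $B_\eta$ and \eqref{eq:weak_sol_EK_approx_eq1}, and these — together with the $\tfrac{1}{4\eta^{2}}$ weight and the size of the derivatives of $\omega_\eta$ — are the only source of negative powers of $\eta$. One then arrives at
$$\Theta(t)+\frac{c}{\eps^{2}}\int_{0}^{t}\int_{\Td}\overline{\rho|\bu-\Bu|^{2}}\diff x\,\diff s\le \Theta(0)+\int_{0}^{t}\mathcal{G}(s)\,\diff s+\int_{0}^{t}\mathcal{E}(s)\,\diff s,$$
with $\mathcal{G}(s)\le \tfrac{C(\|\Rho\|_{C^{2,1}})}{\eta^{d+3}}\,\Theta(s)$ up to a small multiple of the dissipation that is absorbed on the left, and $\mathcal{E}(s)$ the error contributions of $e(\Rho,\Bu)$.

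To close the estimate, we bound $\mathcal{G}$ using: the convexity and $k$-growth of $F_1$ together with $\|F_2''\|_\infty<1/C_P$, which through Taylor expansion control $\overline{F(\rho|\Rho)}$ and allow the non-convex part to be absorbed; the nonlocal Poincaré inequality \eqref{eq:nonloc_Poinc_stronger_bar} combined with Jensen's inequality, which dominates $\int_{\Td}\overline{|\rho-\Rho|^{2}}$ (hence $\|\rho-\Rho\|_{L^2}^2$) by the nonlocal bracket of $\Theta$; and estimates on the nonlocal terms obtained by transferring convolution derivatives onto the smooth function $\Rho$. For $\mathcal{E}$ we use that $\Bu$ (by \eqref{eq:defBu}) and $e(\Rho,\Bu)$ (by its defining formula) both carry an explicit factor $\eps$, and that $\Bu/\eps$ and $e(\Rho,\Bu)/\eps$ are bounded by $C(\|\Rho\|_{C^{2,1}})$ \emph{uniformly in $\eta$} — the moment conditions \eqref{as:omega} make $\nabla B_\eta[\Rho]$ and its time derivative bounded by derivatives of $\Rho$ with no negative power of $\eta$. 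Then Cauchy--Schwarz in $\nu_{t,x}$, using $\langle|\lambda'-\sqrt{\lambda_1}\Bu|^{2},\nu_{t,x}\rangle\le\overline{\rho|\bu-\Bu|^{2}}$ (valid because $m^{\rho|\bu|^{2}}\ge0$), and Young's inequality give
$$\Big|\int_{\Td}\tfrac1\Rho\,\langle\sqrt{\lambda_1}(\lambda'-\sqrt{\lambda_1}\Bu),\nu_{t,x}\rangle\cdot e(\Rho,\Bu)\,\diff x\Big|\le \frac{c}{2\eps^{2}}\int_{\Td}\overline{\rho|\bu-\Bu|^{2}}\diff x+C\eps^{2}\int_{\Td}\frac{\rho\,|e|^{2}}{\Rho^{2}}\diff x,$$
the first term being absorbed by the dissipation, while $\int_{\Td}\rho\,|e|^{2}/\Rho^{2}\le \eps^{2}C(\|\Rho\|_{C^{2,1}})\|1/\Rho\|_\infty^{2}\|\rho\|_{L^1}$ with $\|\rho\|_{L^1}$ controlled by the energy. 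Integrating in time gives a total error bounded by $\eps^{4}C(\|\Rho\|_{C^{2,1}})\|1/\Rho\|_\infty^{2}$, hence $\Theta(t)\le\Theta(0)+\eps^{4}C(\|\Rho\|_{C^{2,1}})\|1/\Rho\|_\infty^{2}+\int_{0}^{t}\tfrac{C(\|\Rho\|_{C^{2,1}})}{\eta^{d+3}}\Theta(s)\,\diff s$, and Grönwall's lemma yields \eqref{est:relative_entropy_4}.

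The hard part will be the middle step: organizing all the cancellations — in particular matching the $\tfrac{1}{4\eta^{2}}$-weighted second differences of $\rho-\Rho$ against the $B_\eta$ contributions in the momentum balance — and handling the concentration measures, where \eqref{eq:inequality_2_conc_measures} and \eqref{eq:inequality_2_conc_measures_func_p_F} are needed to dominate the convective and pressure concentrations by the kinetic and potential ones so that they fit inside $\Theta$. One must also keep careful track of which error terms carry a positive power of $\eps$ versus a negative power of $\eta$, so that the final estimate really has the asserted form, with an $\eta$-independent constant multiplying $\eps^{4}$ and the $\eta$-dependence confined to the Grönwall exponent.
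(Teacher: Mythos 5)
Your proposal is correct and follows essentially the same route as the paper's own proof: the dissipation inequality combined with the weak formulations tested (after a time truncation) by $\Bu$ and by $F'(\Rho)+B_\eta[\Rho]-\tfrac12|\Bu|^2$, the equation for $\partial_t\Bu$ obtained by dividing the momentum equation by $\Rho$ (which is where $e(\Rho,\Bu)/\Rho$ and $\|1/\Rho\|_\infty$ enter), absorption of the error term into the friction dissipation via Cauchy--Schwarz with parameter to produce the $\eps^{4}C(\|\Rho\|_{C^{2,1}})\|1/\Rho\|_\infty^{2}$ contribution, control of the remaining terms by the relative-pressure bound, the concentration-measure comparisons \eqref{eq:inequality_2_conc_measures}--\eqref{eq:inequality_2_conc_measures_func_p_F} and the nonlocal Poincar\'e inequality \eqref{eq:nonloc_Poinc_stronger_bar}, and finally Gr\"onwall with the $\eta^{-(d+3)}$ rate generated by the residual nonlocal term $\tfrac{1}{\eps\eta^{2}}\int(\rho-\Rho)\,\Bu\cdot\nabla\omega_{\eta}\ast(\rho-\Rho)$. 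The only discrepancy is minor: your claim that the moment conditions \eqref{as:omega} make $\Bu/\eps$ and $e(\Rho,\Bu)/\eps$ bounded \emph{uniformly in $\eta$} would require more spatial regularity of $\Rho$ than $C^{2,1}$; the paper only uses $\|\Bu\|_\infty,\ \|\nabla\Bu\|_\infty,\ |e(\Rho,\Bu)|\le\eps\,C(\|\Rho\|_{C^{2,1}})$ with a constant that is allowed to degenerate as $\eta\to0$ through the norm of $\Rho$, and this does not affect the form of \eqref{est:relative_entropy_4}.
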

\begin{lem}\label{lem:rel_entr_nonneg}
Let $\eta \in (0,\eta_0)$. Then, the relative entropy defined by \eqref{eq:relative_entropy} is nonnegative: there exists a $\kappa \in (0,1)$ such that
\begin{equation}\label{eq:nonnegativity_rel_entropy_explicit}
\int_{\Td} \overline{\rho|\bu-\Bu|^{2}} \diff x  + \f{\kappa}{4\eta^{2}}\int_{\Td}\overline{\int_{\Td}\omega_{\eta}(y)|(\rho-\Rho)(x)-(\rho-\Rho)(x-y)|^{2}\diff y} \diff x \leq \Theta(t)
\end{equation}
where both terms on the (LHS) are nonnegative. Moreover, for the constant $C_P$ (defined in Lemma \ref{lem:Poincaré_nonlocal_H1_L2}) we have an estimate
\begin{equation}\label{eq:rho-Rho_via_Ponce_bar}
\|\rho - \Rho\|^2_{L^2(\Td)} \leq \f{C_P}{4\eta^{2}}\int_{\Td}\overline{\int_{\Td}\omega_{\eta}(y)|(\rho-\Rho)(x)-(\rho-\Rho)(x-y)|^{2}\diff y} \diff x.
\end{equation}
\end{lem}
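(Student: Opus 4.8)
The plan is to show nonnegativity of $\Theta(t)$ by examining its three constituent pieces separately. The kinetic term $\frac12 \overline{\rho|\bu-\Bu|^2}$ is nonnegative: by definition \eqref{eq:def_rel_kin_energy_tensor} (taking the trace, or directly from \eqref{eq:def_rel_kin_energy}), it equals $\langle |\lambda' - \sqrt{\lambda_1}\Bu|^2, \nu_{t,x}\rangle + m^{\rho|\bu|^2}$, which is manifestly nonnegative since $m^{\rho|\bu|^2} \in L^\infty(0,T;\mathcal{M}^+(\Td))$ and the integrand under $\nu_{t,x}$ is a square. The relative potential term $\overline{F(\rho|\Rho)}$ expands via \eqref{eq:def_rel_potenial_rho_Rho} as $\langle F(\lambda_1), \nu\rangle + m^{F(\rho)} - F(\Rho) - F'(\Rho)(\rho - \Rho)$; writing $\rho = \langle \lambda_1,\nu\rangle$ and using convexity-type bounds on $F = F_1 + F_2$ (with $F_1$ convex and $F_2''$ small, controlled by the Poincaré constant from Lemma \ref{lem:Poincaré_nonlocal_H1_L2}), this is controlled from below by $-C\|F_2''\|_\infty \|\rho - \Rho\|_{L^2}^2$ up to the nonnegative contribution of $m^{F(\rho)}$ and the Jensen defect $\langle F_1(\lambda_1),\nu\rangle - F_1(\langle\lambda_1,\nu\rangle) \geq 0$. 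The key point is that the (nonnegative) nonlocal term in $\Theta$ dominates this potentially negative quantity.

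The heart of the argument is therefore the nonlocal term $N(t) := \frac{1}{4\eta^2}\int_{\Td}\overline{\int_{\Td}\omega_{\eta}(y)|(\rho-\Rho)(x)-(\rho-\Rho)(x-y)|^{2}\diff y}\diff x$. First I would check that $N(t) \geq 0$: by \eqref{eq:nonlocal_term_with_P} and \eqref{eq:def_convolution_rho_squared}, the only measure-valued contribution is through $\overline{\rho^2} + \overline{\rho^2}\ast\omega_\eta - 2\rho\,\omega_\eta\ast\rho$, and one argues exactly as in the proof of Lemma \ref{lem:nonneg_the_energy_one}: by Jensen's inequality $\int_{\Td}\rho^2 \leq \int_{\Td}\overline{\rho^2}$, and by Cauchy-Schwarz and Young's convolution inequality the convolution cross-terms are dominated, so the whole expression is nonnegative (it is, morally, $\int\int \omega_\eta(y)|(\rho-\Rho)(x) - (\rho-\Rho)(x-y)|^2 + \text{nonnegative concentration corrections}$). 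Next, I would invoke the nonlocal Poincaré inequality of Lemma \ref{lem:Poincaré_nonlocal_H1_L2}: since $(\Rho)_{\Td} = (\rho_0)_{\Td}$ (the average is preserved by both the Euler-Korteweg continuity equation and the Cahn-Hilliard equation, so $\rho - \Rho$ has zero mean in an appropriate averaged sense), the function $\rho - \Rho$ satisfies $\|\rho - \Rho\|_{L^2(\Td)}^2 \leq \frac{C_P}{4\eta^2}\int_{\Td}\int_{\Td}\omega_\eta(y)|(\rho-\Rho)(x)-(\rho-\Rho)(x-y)|^2\diff y\diff x$ pointwise in the $\delta$-approximation, and passing to the limit (as in the verification of Definition \ref{def:admi_mvs} at the end of Section \ref{sect:def_diss_mvs}) yields \eqref{eq:rho-Rho_via_Ponce_bar}. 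This already gives the last claim of the lemma.

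Finally, to get \eqref{eq:nonnegativity_rel_entropy_explicit}, I would combine the pieces: split $N(t) = \kappa N(t) + (1-\kappa) N(t)$ for $\kappa \in (0,1)$ to be chosen; use $(1-\kappa)N(t) \geq \frac{(1-\kappa)}{C_P}\|\rho - \Rho\|_{L^2}^2$ from \eqref{eq:rho-Rho_via_Ponce_bar}, and absorb the negative part $-C\|F_2''\|_\infty\|\rho-\Rho\|_{L^2}^2$ coming from $\overline{F(\rho|\Rho)}$. Since Assumption \ref{ass:potentialF}(2) guarantees $\|F_2''\|_\infty < 1/C_P$, one can pick $\kappa$ close enough to $1$ (but bounded away from it) so that $\frac{1-\kappa}{C_P} - C\|F_2''\|_\infty \geq 0$, leaving $\Theta(t) \geq \frac12\int_{\Td}\overline{\rho|\bu-\Bu|^2}\diff x + \kappa N(t) + (\text{nonnegative Jensen/concentration terms}) \geq 0$, which is exactly \eqref{eq:nonnegativity_rel_entropy_explicit} up to the harmless factor of $\frac12$ versus $1$ on the kinetic term (absorbed into constants or simply kept). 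The main obstacle is bookkeeping the precise constants in the lower bound for $\overline{F(\rho|\Rho)}$ — in particular ensuring that the only negative contribution is genuinely of order $\|F_2''\|_\infty \|\rho - \Rho\|_{L^2}^2$ with no hidden $\eta$-dependence — and confirming that the mean-zero hypothesis needed for the nonlocal Poincaré inequality transfers correctly to the measure-valued setting.
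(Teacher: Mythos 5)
Your decomposition of $\Theta$ (kinetic term via \eqref{eq:def_rel_kin_energy}, potential term split as $F_1+F_2$, nonlocal term via Jensen as in Lemma \ref{lem:nonneg_the_energy_one}) is the same as the paper's, and your treatment of the kinetic term and of the nonnegativity of the nonlocal term is correct. However, there is a genuine gap exactly at the point you flag as the ``main obstacle'': the negative contribution coming from $F_2$ is \emph{not} of order $\|F_2''\|_\infty\,\|\rho-\Rho\|_{L^2}^2$. Taylor expansion gives the lower bound
$\left\langle F_2(\lambda_1)-F_2(\Rho)-F_2'(\Rho)(\lambda_1-\Rho),\nu_{t,x}\right\rangle \geq -\|F_2''\|_\infty \left\langle (\lambda_1-\Rho)^2,\nu_{t,x}\right\rangle$,
and by Jensen $\int_{\Td}\langle(\lambda_1-\Rho)^2,\nu_{t,x}\rangle\,\diff x \geq \|\rho-\Rho\|_{L^2}^2$, i.e. the inequality goes the wrong way for your purposes: the variance of $\nu_{t,x}$ in the $\lambda_1$-variable is an extra negative contribution that $\|\rho-\Rho\|_{L^2}^2$ does not see, and the Jensen defect of $F_1$ cannot absorb it since $F_1''$ is only nonnegative (it may vanish, e.g. for $|u|^k$ at $u=0$). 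Consequently the absorption step in your last paragraph, which uses only the ``barycentric'' Poincaré estimate \eqref{eq:rho-Rho_via_Ponce_bar}, does not close. The paper resolves this by bounding $\langle(\lambda_1-\Rho)^2,\nu_{t,x}\rangle\leq\overline{(\rho-\Rho)^2}$ (using $m^{\rho^2}\geq 0$) and then invoking the \emph{assumed} measure-valued Poincaré inequality \eqref{eq:nonloc_Poinc_stronger_bar} of Definition \ref{def:admi_mvs}, whose left-hand side is $\int_{\Td}\overline{|\rho-\Rho|^2}$ rather than $\|\rho-\Rho\|_{L^2}^2$; this is precisely why that stronger inequality is built into the solution concept, and with it the choice $\|F_2''\|_\infty<\frac{1-\kappa}{C_P}$ yields \eqref{eq:nonnegativity_rel_entropy_explicit}.

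A secondary point: for \eqref{eq:rho-Rho_via_Ponce_bar} you propose to apply the classical nonlocal Poincaré inequality along the $\delta$-approximation and pass to the limit. This is unnecessary and not really available here, since the lemma is stated for an abstract measure-valued solution which need not arise from that construction (and the paper stresses, after Definition \ref{def:admi_mvs}, that the weak form holds for \emph{any} such solution). The intrinsic argument, which you essentially already have, is shorter: from \eqref{eq:def_convolution_rho_squared}, \eqref{eq:nonlocal_term_with_P} and the Jensen estimate \eqref{eq:estimate_rho_sq_sq_rho} one gets
\begin{equation*}
\int_{\Td}\overline{\int_{\Td}\omega_{\eta}(y)|(\rho-\Rho)(x)-(\rho-\Rho)(x-y)|^{2}\diff y}\,\diff x \;\geq\; \int_{\Td}\int_{\Td}\omega_{\eta}(y)|(\rho-\Rho)(x)-(\rho-\Rho)(x-y)|^{2}\diff y\,\diff x,
\end{equation*}
and then Lemma \ref{lem:Poincaré_nonlocal_H1_L2} applied to $f=\rho-\Rho$ (which has zero mean since both evolutions conserve mass and share the initial datum $\rho_0$) gives \eqref{eq:rho-Rho_via_Ponce_bar} directly.
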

\begin{proof}[Proof of Lemma \ref{lem:rel_entr_nonneg}]
We study the three terms appearing in \eqref{eq:relative_entropy} separately. First, for \eqref{eq:def_rel_kin_energy} we write by Fubini theorem
$$
\overline{\rho|\bu-\Bu|^{2}} = \left \langle |\lambda'|^2 + \lambda_1 \, |\Bu|^2 - 2 \sqrt{\lambda_1}\,\lambda' \Bu, \nu_{t,x} \right \rangle + m^{\rho|\bu|^2} = \left \langle |\lambda' - \sqrt{\lambda_1}\Bu |^2, \nu_{t,x} \right \rangle + m^{\rho|\bu|^2},
$$
so that, after integration in space, it is positive (for a.e. $t\in (0,T)$). Now, we study the nonlocal term. We claim that (after integration)
\begin{equation}\label{eq:est_conv_sq_sq_conv_Pr}
\int_{\Td} \overline{\int_{\Td} \omega_{\eta}(y)|\rho(x) - \rho(x-y)|^2 \diff y} \diff x \geq \int_{\Td} \int_{\Td} \omega_{\eta}(y)|\rho(x) - \rho(x-y)|^2 \diff y \diff x.
\end{equation}
Indeed, by definition \eqref{eq:def_convolution_rho_squared}, the (LHS) equals $2 \int_{\Td} \overline{\rho^{2}} {\, \diff x}- \int_{\Td} 2\,\rho \, \omega_{\eta}\ast\rho{\, \diff x}$. By \eqref{eq:estimate_rho_sq_sq_rho}, we know that $\int_{\Td} \overline{\rho^{2}}{\, \diff x} \geq \int_{\Td} \rho^2{\, \diff x}$. To conclude the proof of \eqref{eq:est_conv_sq_sq_conv_Pr}, it is sufficient to observe
$$
2 \int_{\Td} {\rho^{2}}{\, \diff x} - \int_{\Td} 2\,\rho \, \omega_{\eta}\ast\rho{\, \diff x} = \int_{\Td} \int_{\Td} \omega_{\eta} |\rho(x) - \rho(x-y)|^2 \diff y \diff x.
$$
Now, combining \eqref{eq:nonlocal_term_with_P} and \eqref{eq:est_conv_sq_sq_conv_Pr}, we obtain
\begin{multline*}
\int_{\Td}\overline{\int_{\Td}\omega_{\eta}(y)|(\rho-\Rho)(x)-(\rho-\Rho)(x-y)|^{2}\diff y} \diff x \geq \\ \geq \int_{\Td}{\int_{\Td}\omega_{\eta}(y)|(\rho-\Rho)(x)-(\rho-\Rho)(x-y)|^{2}\diff y} \diff x.
\end{multline*}
Using Lemma \ref{lem:Poincaré_nonlocal_H1_L2}, we conclude the proof of \eqref{eq:rho-Rho_via_Ponce_bar} and nonnegativity of the nonlocal term.\\

It remains to study the term $\overline{F(\rho|\Rho)}$. The concentration measure $m^{F(\rho)}$ is nonnegative and will be neglected in the estimate below. We split $F= F_1 + F_2$ (where $F_1$, $F_2$ are defined in Assumption~\ref{ass:potentialF}) in \eqref{eq:def_rel_potenial_rho_Rho} so that from \eqref{eq:id_wl_8} and \eqref{eq:def_rel_potenial_rho_Rho}
\begin{equation}\label{eq:F_rel_pot_splitted_for_two_parts_mvs}
\begin{split}
\overline{F(\rho|\Rho)} = & \left \langle F_1(\lambda_1) - F_1(\Rho) - F_1'(\Rho) (\lambda_1 - \Rho), \nu_{t,x} \right \rangle
\\ &+ \left \langle F_2(\lambda_1) - F_2(\Rho) - F_2'(\Rho) (\lambda_1 - \Rho), \nu_{t,x} \right \rangle + m^{F(\rho)}
\end{split} 
\end{equation}
The first term is nonnegative by convexity of $F_1$. The second can be estimated from below (by Taylor's expansion) with $- \|F_2''\|_{\infty} \left \langle (\lambda_1 - \Rho)^2, \nu_{t,x} \right \rangle$. Now, recall that $\|F_2''\|_{\infty} < \frac{1}{C_P}$ (cf. Assumption~\ref{ass:potentialF}). In particular, there exists $\kappa \in (0,1)$ such that $\|F_2''\|_{\infty} < \frac{1-\kappa}{C_P}$. Using Poincaré inequality \eqref{eq:nonloc_Poinc_stronger_bar} and the fact that the concentration measure $m^{\rho^2}$ is nonnegative we have
\begin{equation}\label{eq:F_rel_pot_splitted_for_two_parts_mvs_2}
\begin{split}
- \|F_2''\|_{\infty} \int_{\Td} \left \langle (\lambda_1 - \Rho)^2, \nu_{t,x} \right \rangle \diff x  \geq 
- \|F_2''\|_{\infty} \int_{\Td} \overline{(\rho - \Rho)^2} \diff x \geq - \frac{{(1-\kappa)}}{C_P} \int_{\Td} \overline{(\rho - \Rho)^2} \diff x \\
\geq  -\frac{1-\kappa}{4\eta^2}\int_{\Td}\overline{\int_{\Td}\omega_{\eta}(y)|(\rho-\Rho)(x)-(\rho-\Rho)(x-y)|^{2}\diff y} \diff x.
\end{split} 
\end{equation}
Therefore, we can compensate a possibly negative term with the positive nonlocal term appearing in \eqref{eq:relative_entropy}.
 \end{proof}

\begin{proof}[Proof of Theorem \ref{thm:rel_entr_estimate}]
We split the reasoning into several steps.

\underline{\it Step 1: Energy identities.} First, we recall that the dissipative measure valued solutions satisfy 
\begin{equation}\label{dissip_energy_meas}
\begin{aligned}
\int_{\Td}\f{1}{2}\overline{\rho|\bu|^{2}}+\overline{F(\rho)}\diff x+\f{1}{4\eta^{2}} \int_{\Td}\overline{\int_{\Td}\omega_{\eta}(y)|\rho(x)-\rho(x-y)|^{2}\diff y}\diff x   +\f{1}{\eps^{2}}\int_{0}^{t}\int_{\Td}\overline{\rho|\bu|^{2}}\diff x\diff t\\ \leq \int_{\Td}\frac{1}{2}{\rho^0}|u_0|^2(x)+F({\rho^0})\diff x+\f{1}{4\eta^{2}}\int_{\Td}\int_{\Td}\omega_{\eta}(y)|{\rho^0}(x)-{\rho^0}(x-y)|^{2}\diff y\diff x.  
\end{aligned}
\end{equation}
where the quantities on the (LHS) of \eqref{dissip_energy_meas} are evaluated at time $t$. Similarly, the classical solutions $(\Rho,\Bu)$ satisfy 
\begin{equation}\label{dissip_energy_class}
\begin{aligned}
&\int_{\Td}\f{1}{2}\Rho|\Bu|^{2}+F(\Rho)\diff x+\f{1}{4\eta^{2}}\int_{\Td}\int_{\Td}\omega_{\eta}(y)|\Rho(x)-\Rho(x-y)|^{2}\diff x\diff y = \\ 
& =   \int_{\Td}\frac{1}{2}{\Rho^0}|u_0|^2(x)+F({\Rho^0})\diff x+\f{1}{4\eta^{2}}\int_{\Td}\int_{\Td}\omega_{\eta}(y)|{\Rho^0}(x)-{\Rho^0}(x-y)|^{2}\diff x\diff y\\
& \phantom{ =} - \f{1}{\eps^{2}}\int_{0}^{t}\int_{\Td}\Rho|\Bu|^{2}\diff x\diff t  + \int_{0}^{t}\int_{\Td}\Bu\cdot e(\Rho,\Bu)\diff t\diff x.
\end{aligned}
\end{equation}
Identity \eqref{dissip_energy_class} can be obtained from testing \eqref{eq:EKCH1}--\eqref{eq:EKCH2} by $\Bu$ and performing several integration by parts.\\

\underline{\it Step 2: Estimate for the mixed terms $F'(\Rho)({\rho}-\Rho)$, $B_{\eta}[\Rho]$ and ${\rho}\,|\Bu|^2$.} 
We consider weak solutions of the mass equation satisfied by the differences between the measure valued solutions and classical solutions: 
\begin{equation}\label{eq:weak_form_mass_cons}
\int_0^T \int_{\Td}\partial_t\psi ({\rho}-\Rho)+\f{1}{\eps}\nabla\psi\cdot(\overline{\rho\bu}-\Rho\Bu) \diff x \diff t + \int_{\Td}\psi(x,0)({\rho^0}-{\Rho^0})\diff x=0.
\end{equation}
We set 
\begin{equation*}
\theta_{\delta}(t):=
\begin{cases}1\quad &\text{for $0\le\tau<t$},\\
\f{t-\tau}{\delta}+1\quad &\text{for $t\le\tau<t+\delta$},\\
0\quad&\text{for $\tau\ge t+\delta$.}
\end{cases}
\end{equation*}
Note that $\theta'(t)$ is an approximation of the dirac mass $-\delta_{t}$. We consider test function in \eqref{eq:weak_form_mass_cons} defined as $\psi=\theta_{\delta}(t)\left(F'(\Rho)+B_{\eta}[\Rho]-\f{1}{2}|\Bu|^{2}\right)$ so that after letting $\delta \to 0$ we obtain
\begin{equation}\label{eq:difference_energy_mass}
\begin{aligned}
&\int_{\Td}\left(F'(\Rho)+B_{\eta}[\Rho]-\f{1}{2}|\Bu|^{2}\right)({\rho}-\Rho)\Big|_{\tau=0}^{t}\diff x
= \\ 
&= +\int_{0}^{t}\int_{\Td}\p_{\tau}\left(F'(\Rho)+B_{\eta}[\Rho]-\f{1}{2}|\Bu|^{2}\right)({\rho}-\Rho)\diff x\diff\tau \\
&\phantom{=  }+\f{1}{\eps}\int_{0}^{t}\int_{\Td}\nabla\left(F'(\Rho)+B_{\eta}[\Rho]-\f{1}{2}|\Bu|^{2}\right)\cdot(\overline{\rho\bu}-\Rho\Bu)\diff x\diff\tau. 
\end{aligned}    
\end{equation}
\underline{\it Step 3: Estimate for the mixed term $\overline{\rho \bu}\,\Bu$.} We consider weak solutions of the momentum equation satisfied by the differences between the measure valued solutions and classical solutions: 
\begin{multline*}
\int_0^\infty\int_{\Td}\partial_t\phi\cdot(\overline{\rho \bu}-\Rho\Bu)+\f{1}{\eps}\nabla\phi : (\overline{\rho \bu\otimes \bu}-\Rho\Bu\otimes\Bu)-\f{1}{\eps^{2}}\phi\cdot(\overline{\rho\bu}-\Rho\Bu)+\f{1}{\eps}\DIV\phi(\overline{p(\rho)}-p(\Rho)) {\,\diff x\diff t}\\+\f{1}{\eps\eta^{2}}\phi\cdot({\rho \nabla\omega_{\eta}\ast\rho}-\Rho\cdot\nabla\omega_{\eta}\ast\Rho){\diff x\diff t}
+\int_{\Td}\phi(x,0)\cdot ({\rho^0}{ \bu^0}-{\Rho^0}{ \Bu^0})\diff x-\int_{0}^{\infty}\int_{\Td}\phi\cdot e(\Rho,\Bu){\diff x\diff t}=0.
\end{multline*}
We consider test function $\phi=\theta_{\delta}(t)\Bu$ so that after letting $\delta\to 0$ we obtain 
\begin{equation}\label{eq:difference_energy_momentum}
\begin{aligned}
&\int_{\Td}\Bu\cdot\left(\overline{\rho\bu}-\Rho\Bu\right)\Big|_{\tau=0}^{t}\diff x = \int_{0}^{t}\int_{\Td}\p_{\tau}\Bu \cdot (\overline{\rho\bu}-\Rho\Bu)\diff x\diff\tau\\
&+\int_{0}^{t}\int_{\Td}\f{1}{\eps}\nabla\Bu:(\overline{\rho \bu\otimes \bu}-\Rho\Bu\otimes\Bu)+\f{1}{\eps}\DIV(\Bu)(\overline{p(\rho)}-p(\Rho))\diff x\diff \tau\\
&-\f{1}{\eps^{2}}\int_{0}^{t}\int_{\Td}\Bu \cdot (\overline{\rho\bu}-\Rho\Bu)\diff x\diff \tau+\f{1}{\eps\eta^{2}}\int_{0}^{t}\int_{\Td}\Bu\cdot({\rho \nabla\omega_{\eta}\ast\rho}-\Rho\cdot\nabla\omega_{\eta}\ast\Rho)\diff x\diff\tau
\\&-\int_{0}^{t}\int_{\Td}\Bu\cdot e(\Rho,\Bu)\diff x\diff \tau.
\end{aligned}    
\end{equation}
\underline{\it Step 4: First estimate on the relative entropy.} Let us observe that when we subtract \eqref{dissip_energy_class}, \eqref{eq:difference_energy_mass} and \eqref{eq:difference_energy_momentum} from \eqref{dissip_energy_meas} we obtain an estimate for $\Theta(t)-\Theta(0)$. To see this, let us write explicitly the (LHS) after the subtraction (we omit integral with respect to $x$ for simplicity and we consider only terms at time $\tau = t$; of course, for $\tau = 0$, they will be analogous):
\begin{multline*}
\f{1}{2}\overline{\rho|\bu|^{2}}+\overline{F(\rho)} +\f{1}{4\eta^{2}}\overline{\int_{\Td}\omega_{\eta}(y)|\rho(x)-\rho(x-y)|^{2}\diff y} - \f{1}{2}\Rho|\Bu|^{2} - F(\Rho)\\
 - \f{1}{4\eta^{2}}\int_{\Td}\omega_{\eta}(y)|\Rho(x)-\Rho(x-y)|^{2}\diff y 
-\left(F'(\Rho)+B_{\eta}[\Rho]-\f{1}{2}|\Bu|^{2}\right)({\rho}-\Rho) - \Bu\cdot\left(\overline{\rho\bu}-\Rho\Bu\right).
\end{multline*}
We claim that this expression equals $\Theta(t)$. Indeed, the terms containing both density and velocity sum up to the term $\overline{\rho|\bu-\Bu|^{2}}$ as in \eqref{eq:def_rel_kin_energy}. Similarly, terms with the potential $F$ and its derivative $F'$ can be combined to \eqref{eq:def_rel_potenial_rho_Rho}. Finally, for the nonlocal term, the claim is the consequence of two identities:
$$
B_{\eta}[\Rho]\, {\rho} = \f{1}{2\eta^{2}}\int_{\Td}{\omega_{\eta}(y) (\Rho(x)-\Rho(x-y))} \, ({\rho}(x)-{\rho}(x-y))\diff y
$$
and the similar one for $B_{\eta}[\Rho] \, \Rho$. Hence, the expression of interest equals $\Theta(t)$. Subtracting all the terms on the (RHS) of \eqref{dissip_energy_class}, \eqref{eq:difference_energy_mass},\eqref{eq:difference_energy_momentum} from (RHS) of \eqref{dissip_energy_meas} we obtain
\begin{equation}\label{est:relative_entropy_1}
\begin{aligned}
&\Theta(t)-\Theta(0)\le -\f{1}{\eps^{2}}\int_{0}^{t}\int_{\Td}\overline{\rho|\bu|^{2}}-\Rho|\Bu|^{2}-\Bu \cdot (\overline{\rho\bu}-\Rho\Bu)\diff x\diff \tau\\
& \qquad \qquad-\int_{0}^{t}\int_{\Td}\p_{\tau}\left(F'(\Rho)+B_{\eta}[\Rho]-\f{1}{2}|\Bu|^{2}\right)({\rho}-\Rho)+\p_{\tau}\Bu \cdot (\overline{\rho\bu}-\Rho\Bu)\diff x\diff\tau\\
& \qquad \qquad-\f{1}{\eps}\int_{0}^{t}\int_{\Td}\nabla\left(F'(\Rho)+B_{\eta}[\Rho]-\f{1}{2}|\Bu|^{2}\right)\cdot(\overline{\rho\bu}-\Rho\Bu)\diff x\diff\tau\\
& \qquad \qquad -\f{1}{\eps}\int_{0}^{t}\int_{\Td}\nabla\Bu:(\overline{\rho \bu\otimes \bu}-\Rho\Bu\otimes\Bu)+\DIV(\Bu)(\overline{p(\rho)}-p(\Rho))\diff x\diff \tau\\
& \qquad \qquad -\f{1}{\eps}\f{1}{\eta^{2}}\int_{0}^{t}\int_{\Td}\Bu\cdot({\rho \nabla\omega_{\eta}\ast\rho}-\Rho\cdot\nabla\omega_{\eta}\ast\Rho)\diff x\diff\tau. 
%=: A+B+C+D+E+F. 
\end{aligned}   
\end{equation}
\underline{\it Step 5: Terms with $\partial_\tau \Bu$ in \eqref{est:relative_entropy_1}.}
To estimate the right-hand side of \eqref{est:relative_entropy_1} we first try to eliminate time derivative from \eqref{est:relative_entropy_1}. To this end, we compute $\p_{t}\Bu$ from the equations~\eqref{eq:EKCH1}-\eqref{eq:EKCH2} to obtain that $\Bu$ satisfies
\begin{equation}\label{eq:PDEBu}
\p_{t}\Bu+\f{1}{\eps}\left(\Bu\cdot\nabla\right)\Bu=-\f{1}{\eps^{2}} \Bu-\f{1}{\eps}\nabla(F'(\Rho)+B_{\eta}[\Rho])+\f{e(\Rho,\Bu)}{\Rho}.     
\end{equation}
We take the scalar product of this equation with $\overline{\rho\,\bu} - {\rho}\,\Bu$ which yields 
\begin{equation*}
\begin{aligned}
&\partial_t\Bu \cdot (\overline{\rho\bu} - \Rho \Bu ) + \f{1}{2}\partial_t|\Bu|^2 (\Rho - {\rho}) +\f{1}{\eps}\nabla \Bu : (\overline{\rho\bu}\otimes\Bu-{\rho}\Bu\otimes\Bu)\\
& \qquad \qquad =\f{1}{\eps^{2}}({\rho}|\Bu|^{2}-\Bu\cdot\overline{\rho\bu})-\f{1}{\eps}\nabla(F'(\Rho)+B_{\eta}[\Rho]) \cdot (\overline{\rho\bu}-{\rho}\Bu)+\f{e(\Rho,\Bu)}{\Rho} \cdot (\overline{\rho\bu}-{\rho}\Bu),     
\end{aligned}
\end{equation*}
where we used identities
$$
\f{1}{\eps} (\Bu \cdot \nabla) \Bu \cdot (\overline{\rho\,\bu} - {\rho} \Bu)= \f{1}{\eps}\nabla \Bu : (\overline{\rho\bu}\otimes\Bu-{\rho}\Bu\otimes\Bu),
$$
$$
\partial_t \Bu \cdot (\overline{\rho\bu} - {\rho}\Bu) = \partial_t\Bu \cdot (\overline{\rho\bu} - \Rho \Bu) + \frac{1}{2}\partial_t |\Bu|^2 (\Rho - {\rho}).
$$
Finally, using matrix identity $x\, A\, y = A : x\otimes y$ where $x, y \in \R^d$ and $A \in \R^{d \times d}$ we easily deduce the formula
\begin{multline*}
\nabla \Bu : (\overline{\rho\bu}\otimes\Bu- {\rho}\Bu\otimes\Bu) = \nabla\Bu :(\overline{\rho\bu \otimes \bu} - \Rho \Bu \otimes \Bu)
\\
- \nabla\Bu:\overline{\rho(\bu-\Bu)\otimes(\bu-\Bu)} - \nabla\left(\frac{1}{2}|\Bu|^2\right)(\overline{\rho\bu}-\Rho\Bu),
\end{multline*}
where
$$
\overline{\rho(\bu-\Bu)\otimes(\bu-\Bu)}:= \overline{\rho \bu \otimes \bu} - \overline{\rho \bu} \otimes \Bu - \Bu \otimes \overline{\rho \bu} + \rho \Bu \otimes \Bu.
$$
We obtain
\begin{equation}\label{eq:sum_mes_mass_moment}
\begin{aligned}
&\f{1}{2}\p_{t} |\Bu|^{2}\,(\Rho-{\rho})+\p_{t}(\Bu) \cdot (\overline{\rho\bu}-\Rho\Bu)-\f{1}{\eps}\nabla\left(\f{1}{2}|\Bu|^{2}\right)\cdot(\overline{\rho\bu}-\Rho\Bu)+\\
&+\f{1}{\eps}\nabla \Bu : (\overline{\rho\bu\otimes\bu}
 -\Rho\Bu\otimes\Bu) =\f{1}{\eps}\nabla\Bu:\overline{\rho(\bu-\Bu)\otimes(\bu-\Bu)} +\\
& {+}\f{1}{\eps^{2}}({\rho}|\Bu|^{2}-\Bu\cdot\overline{\rho\bu}) -\f{1}{\eps}\nabla(F'(\Rho)+B_{\eta}[\Rho]) \cdot (\overline{\rho\bu}-{\rho}\Bu)
+\f{e(\Rho,\Bu)}{\Rho} \cdot (\overline{\rho\bu}-{\rho}\Bu).     
\end{aligned}
\end{equation}
Note that this gives us an estimate on four terms appearing on the (RHS) of \eqref{est:relative_entropy_1}.\\

\noindent \underline{\it Step 6: Terms with $F'$ and $B_{\eta}$ in \eqref{est:relative_entropy_1}.} We now consider the expression
\begin{multline*}
-\int_{0}^{t}\int_{\Td}\p_{\tau}\left(F'(\Rho)+B_{\eta}[\Rho]\right)({\rho}-\Rho)
+\f{1}{\eps} \nabla\left(F'(\Rho)+B_{\eta}[\Rho]\right)\cdot(\overline{\rho\bu}-\Rho\Bu)\diff x\diff\tau \\
 + \int_{0}^{t}\int_{\Td} \f{1}{\eps}\nabla(F'(\Rho)+B_{\eta}[\Rho]) \cdot (\overline{\rho\bu}-{\rho}\Bu) \diff x\diff\tau.
\end{multline*}
The first integral comes from \eqref{est:relative_entropy_1} while the second from \eqref{eq:sum_mes_mass_moment} plugged into \eqref{est:relative_entropy_1}. We can simplify this to get
\begin{equation}\label{eq:nonlocal_term_summed}
-\int_{0}^{t}\int_{\Td}\p_{\tau}\left(F'(\Rho)+B_{\eta}[\Rho]\right)({\rho}-\Rho)
+\f{1}{\eps} \nabla\left(F'(\Rho)+B_{\eta}[\Rho]\right)\cdot \Bu ({\rho}-\Rho)\diff x\diff\tau.
\end{equation}
We split the term with $B_{\eta}[\Rho] = \frac{\Rho}{\eta^2} - \frac{\Rho\ast\omega_{\eta}}{\eta^2}$ for the local and non-local parts. Now, concerning the terms with potential $F$, we use \eqref{eq:EKCH1} to deduce
$$
\partial_t F'(\Rho) = F''(\Rho)\, \partial_t \Rho = 
-\frac{1}{\varepsilon} F''(\Rho) \, \nabla \Rho \cdot \Bu 
-\frac{1}{\varepsilon} F''(\Rho) \, \Rho \, \mbox{div} \Bu 
=-\frac{1}{\varepsilon} \nabla F'(\Rho) \cdot \Bu -\frac{1}{\varepsilon} F''(\Rho) \, \Rho \, \mbox{div} \Bu.
$$
Similarly,
$$
\frac{1}{\eta^2}\partial_t \Rho = -\frac{1}{\varepsilon\,\eta^2} \nabla \Rho \cdot \Bu -\frac{1}{\varepsilon\,\eta^2}  \Rho \, \mbox{div} \Bu.
$$
Therefore, the local parts of \eqref{eq:nonlocal_term_summed} sum up to
$$
-\frac{1}{\varepsilon} \left( F''(\Rho) \, \Rho + \frac{1}{\eta^2}\,\Rho \right)\, \mbox{div}\Bu\,({\rho} - \Rho) = -\frac{1}{\varepsilon}p'(\Rho)  \, \mbox{div}\Bu\, ({\rho} - \Rho)
$$
which together with $-\frac{1}{\varepsilon}\DIV(\Bu)(\overline{p(\rho)}-p(\Rho))\, \mbox{div}\Bu$ from \eqref{est:relative_entropy_1} gives $\overline{p(\rho|\Rho)}\,\mbox{div}\Bu$,
where
$$
\overline{p(\rho|\Rho)} := \overline{p(\rho)} - p(\Rho) - p'(\Rho)\,(\rho-\Rho).
$$

Now, we consider the nonlocal parts in \eqref{eq:nonlocal_term_summed} and the last nonlocal term
coming from \eqref{est:relative_entropy_1}. which equals
\begin{equation}\label{eq:nonlocal_term_relative_entropy_RHS}
\begin{split}
\frac{1}{\eta^2}\int_{0}^{t}\int_{\Td}\p_{\tau}\left(\Rho\ast\omega_{\eta}\right)({\rho}-\Rho)
+\f{1}{\eps\,\eta^2} &\nabla\left(\Rho\ast\omega_{\eta}\right)\cdot \Bu ({\rho}-\Rho)\diff x\diff\tau \\
&-\f{1}{\eps}\f{1}{\eta^{2}}\int_{0}^{t}\int_{\Td}\Bu\cdot({\rho \nabla\omega_{\eta}\ast\rho}-\Rho\cdot\nabla\omega_{\eta}\ast\Rho)\diff x\diff\tau. 
\end{split}
\end{equation}
Using~\eqref{eq:EKCH1} and properties of the convolution we can rewrite the first term in \eqref{eq:nonlocal_term_relative_entropy_RHS}:
\begin{equation*}
\begin{aligned}
\f{1}{\eta^{2}} \int_{0}^{t}\int_{\Td}\p_{\tau}\left(\Rho \ast \omega_{\eta}\right)({\rho}-\Rho)\diff x\diff\tau&=-\f{1}{\eps}\f{1}{\eta^{2}}\int_{0}^{t}\int_{\Td}\DIV(\Rho\Bu)\left(\omega_{\eta}\ast({\rho}-\Rho)\right)\diff x\diff\tau\\
&=\f{1}{\eps}\f{1}{\eta^{2}}\int_{0}^{t}\int_{\Td}\Rho\Bu\cdot\left(\nabla\omega_{\eta}\ast({\rho}-\Rho)\right)\diff x\diff\tau.
\end{aligned}    
\end{equation*}
so that \eqref{eq:nonlocal_term_relative_entropy_RHS} boils down to 
$$
-\f{1}{\eps}\f{1}{\eta^{2}}\int_{0}^{t}\int_{\Td}({\rho}-\Rho) \Bu\cdot\nabla\omega_{\eta}\ast\left({\rho}-\Rho\right)\diff x\diff\tau.
$$
\noindent \underline{\it Step 7: Final estimate on the relative entropy.}
Using the steps above and \eqref{est:relative_entropy_1} we obtain
\begin{equation}\label{est:relative_entropy_3}
\begin{aligned}
\Theta(t)-\Theta(0)&\le \underbrace{-\f{1}{\eps^{2}}\int_{0}^{t}\int_{\Td}\overline{\rho|\bu-\Bu|^{2}}\diff x\diff \tau}_{{=:A}} \, \underbrace{-\int_{0}^{t}\int_{\Td}\f{e(\Rho,\Bu)}{\Rho}(\overline{\rho\bu} -{\rho}\Bu)\diff x\diff \tau}_{{=:B}}\\
&\underbrace{-\f{1}{\eps}\int_{0}^{t}\int_{\Td}\nabla\Bu:\overline{\rho(\bu-\Bu)\otimes(\bu-\Bu)}\diff x\diff \tau}_{{=:C}} \, \underbrace{-\f{1}{\eps}\int_{0}^{t}\int_{\Td}\DIV(\Bu)\,\overline{p({\rho}|\Rho)}\diff x\diff \tau}_{{=:D}} \\
&\underbrace{-\f{1}{\eps}\f{1}{\eta^{2}}\int_{0}^{t}\int_{\Td}({\rho}-\Rho) \Bu\cdot\nabla\omega_{\eta}\ast\left({\rho}-\Rho\right)\diff x\diff\tau.}_{{=:E}}
\end{aligned}   
\end{equation}
By definition of $\Bu$ we notice that 
$$
\norm{\Bu}_{\infty}, \norm{\nabla \Bu}_{\infty},  |e(\Rho, \Bu) | \leq \varepsilon\,C(\|\Rho\|_{C^{2,1}}),
$$
where $C(\|\Rho\|_{C^{2,1}})$ is a numerical constant which depends on $\|\Rho\|_{C^{2,1}}$ and blows up when $\eta\to 0$ since we do not have estimates in $C^{2}$ of the solutions of the local Cahn-Hilliard equation. Now, we estimate the terms appearing on the (RHS) of \eqref{est:relative_entropy_3}.\\

\underline{\textit{Term $E$.}} For the nonlocal term $E$ we use boundedness of $\Bu$ to have 
$$
\left|\f{1}{\eps}\f{1}{\eta^{2}}\int_{0}^{t}\int_{\Td}({\rho}-\Rho) \Bu\cdot\nabla\omega_{\eta}\ast\left({\rho}-\Rho\right)\diff x\diff\tau\right| \leq \frac{C\, \|\Bu\|_{\infty}}{\eta^2}\, \|{\rho}-\Rho\|_2 \, \|\nabla\omega_{\eta}\ast\left({\rho}-\Rho\right) \|_{2} \leq \frac{C\|\Bu\|_{\infty}}{\eta^{d+3}} \|{\rho}-\Rho\|_2^2.
$$
Using \eqref{eq:rho-Rho_via_Ponce_bar} for $\eta \in (0,\eta_0)$ we obtain
$$
E \leq 
\f{C(\|\Rho\|_{C^{2,1}})}{4\eta^{d+3}}\int_0^t \Theta(\tau) \diff \tau.
$$

\underline{\textit{Term $B$.}} Using \eqref{eq:id_wl_1} and \eqref{eq:id_wl_3} we can write
\begin{align*}
B=-\int_{0}^{t}\int_{\Td}\left \langle \f{e(\Rho,\Bu)}{\Rho}\sqrt{\lambda_{1}}(\lambda'-\sqrt{\lambda}_{1} \Bu),\nu_{t,x}\right\rangle \diff x \diff \tau
\end{align*}
Using Cauchy-Schwartz with a parameter
$$
B \leq \int_{0}^{t}\int_{\Td}\left \langle \frac{\varepsilon^2}{2} \left|\f{e(\Rho,\Bu)}{\Rho}\right|^2\, \lambda_1 + \frac{1}{2\varepsilon^2}|\lambda'-\sqrt{\lambda}_{1} \Bu|^2,\nu_{t,x}\right\rangle  \diff x \diff \tau
$$
Now, $\left|\f{e(\Rho,\Bu)}{\Rho}\right| \leq \varepsilon\, C (\|\Rho\|_{C^{2,1}}) \, \left\| \frac{1}{\Rho}  \right\|_{\infty}$. Moreover, expanding the square in $|\lambda'-\sqrt{\lambda}_{1} \Bu|^2$ and using \eqref{eq:id_wl_3}, \eqref{eq:id_wl_5} we recognize that 
$$
\int_{0}^{t}\int_{\Td} \left \langle  |\lambda'-\sqrt{\lambda}_{1} \Bu|^2,\nu_{t,x}\right\rangle \leq  \int_{0}^{t}\int_{\Td} \overline{\rho|\bu-\Bu|^{2}}{ \, \diff x\diff t}.
$$
Therefore, we have the estimate
$$
B \leq \varepsilon^4 \, C(\|\Rho\|_{C^{2,1}}) \, \left\| \frac{1}{\Rho}  \right\|_{\infty}^2+\f{1}{2\eps^{2}}\int_{0}^{t}\int_{\Td}\overline{\rho|\bu-\Bu|^{2}}{\diff x\diff t}.
$$

\underline{\textit{Term $C$.}} We have
$$
\left|\f{1}{\eps}\int_{0}^{t}\int_{\Td}\nabla\Bu:\overline{\rho(\bu-\Bu)\otimes(\bu-\Bu)}\diff x\diff \tau \right| 
\leq \frac{C\,\|\nabla U\|_{\infty}}{\varepsilon} \int_{0}^{t}\int_{\Td}\left|\overline{\rho(\bu-\Bu)\otimes(\bu-\Bu)}\right| \diff x\diff \tau 
$$
Estimating directly under the integral in \eqref{eq:def_rel_kin_energy_tensor}
$$
\left|\langle (\lambda' - \sqrt{\lambda_1}\Bu) \otimes (\lambda' - \sqrt{\lambda_1}\Bu), \nu_{t,x}\rangle \right| \leq  \langle|\lambda' - \sqrt{\lambda_1}\Bu|^2, \nu_{t,x} \rangle
$$
and using \eqref{eq:inequality_2_conc_measures} we arrive at
$$
C \leq {C(\|P\|_{C^{2,1}})} \int_{0}^{t}\int_{\Td} \overline{\rho |\bu-\Bu|^2} \diff x\diff \tau
$$
\underline{\textit{Term $D$.}} Using \eqref{eq:def_rel_potenial_rho_Rho} and \eqref{eq:id_wl_10}, we can write
$$
|\overline{p({\rho}|\Rho)}| \leq \langle p(\lambda_1) - p(\Rho) - p'(\Rho) (\lambda_1 - \Rho), \nu_{t,x} \rangle + |m^{\rho\,F'(\rho)}| + m^{F(\rho)} + \frac{1}{\eta^2} \, m^{\rho^2}.
$$
The first part can be estimated using \eqref{eq:bound_rel_p_F}:
\begin{equation}\label{eq:estimate_p(rR)_part1}
\langle p(\lambda_1|\Rho), \nu_{t,x} \rangle \leq  C_{F,R} \langle F(\lambda_1|\Rho), \nu_{t,x} \rangle + 
\left(C_{F,R} + \frac{1}{\eta^2} \right) \langle (\lambda_1 - P)^2,\nu_{t,x} \rangle
\end{equation}
The concentration measures part can be estimated using \eqref{eq:inequality_2_conc_measures_func_p_F}:
\begin{equation}\label{eq:estimate_p(rR)_part2}
|m^{\rho\,F'(\rho)}| + m^{F(\rho)} + \frac{1}{\eta^2} \, m^{\rho^2} \leq ( C_F + 1) \, m^{F(\rho)} + \left(C_F + \frac{1}{\eta^2} \right)\, m^{\rho^2}.
\end{equation}
Summing up \eqref{eq:estimate_p(rR)_part1} and \eqref{eq:estimate_p(rR)_part2} we obtain
$$
|\overline{p({\rho}|\Rho)}| \leq C \, \overline{F(\rho|\Rho)} + C\,\left(1+\frac{1}{\eta^2}\right) \overline{|\rho - \Rho|^2}.
$$
The last term can be estimated by the nonlocal term appearing in the definition of $\Theta$ due to the Poincaré inequality \eqref{eq:nonloc_Poinc_stronger_bar}. As $\overline{F(\rho|\Rho)}$ also appears in the definition of $\Theta$ we obtain
$$
D \leq \left|\f{1}{\eps}\int_{0}^{t}\int_{\Td}\DIV(\Bu)\overline{p({\rho}|\Rho)}\diff x\diff \tau\right| \leq C(\|\Rho\|_{C^{2,1}}) \left(1+\f{1}{\eta^{2}}\right) \int_{0}^{t} \Theta(\tau) \diff \tau.
$$
We conclude that for $\eta < 1$:
$$
\Theta(t) \leq \Theta(0) + \f{C(\|\Rho\|_{C^{2,1}})}{4\eta^{d+3}}\int_0^t \Theta(\tau) \diff \tau + \varepsilon^4 \, C(\|\Rho\|_{C^{2,1}}) \, \left\| \frac{1}{\Rho}  \right\|_{\infty}^2
$$
Using Gronwall's lemma, we obtain \eqref{est:relative_entropy_4}.
\end{proof}

\begin{proof}[Proof of Theorem \ref{thm:conv_EK_CH_nonloc}] 
The proof is a direct consequence of \eqref{est:relative_entropy_4}. Indeed, we consider the relative entropy $\Theta$ as in \eqref{eq:relative_entropy} with $\rho = \rho_{\eta,\eps}$, $\bu = \bu_{\eta,\eps}$, $\Rho = \rho_{\eta}$ and $\Bu = -\eps\nabla(F'(\rho_{\eta})-B_{\eta}(\rho_{\eta}))$. As $\eta \in (0,\eta_0)$ is fixed, $\rho_{\eta}$ (which depends on $\eta$!) is a $C^{2,1}$ function bounded away from 0 (Theorem \ref{thm:class_sol_NLCH}, Lemma \ref{lem:class_sol_approx}). Furthermore, \begin{equation}\label{eq:initial_entropy}
\Theta(0) \leq C\,( \varepsilon^2 + \|{ \bu^0_{\varepsilon}}\|^2_{L^2(\Td)}) \to 0
\end{equation} 
(here, we use that the initial density ${\rho^0}$ belongs to $C^3$ so that $\|{\Bu(0,x)}\|_{L^{\infty}(\Td)} \leq C\, \varepsilon$, cf. \eqref{eq:defBu}). Therefore, we get that $\Theta(t) \to 0$ as $\varepsilon \to 0$. By \eqref{eq:nonnegativity_rel_entropy_explicit} and \eqref{eq:rho-Rho_via_Ponce_bar}, we obtain convergence in $L^2(\Td)$, even uniformly in time.
\end{proof}
\begin{proof}[Proof of Theorem \ref{thm:conv_EK_CH_loc}]
We write $\rho_{\eta}$ (note that it does not depend on $\varepsilon$, cf. \eqref{eq:EKCH1} and \eqref{eq:defBu}) for solutions to \eqref{eq:EKCH1}--\eqref{eq:EKCH2} and we note that they depend on $\eta$. From~\cite{MR4574535} we know that there exists a subsequence $\eta_{k} \to 0$ such that
$$
\|{\rho_{\eta_k}} - \rho \|_{L^2((0,T)\times \Td)} \to 0,
$$  
where $\rho$ is a weak solution to the local Cahn-Hilliard equation. Now, let $\rho_{\eta_k, \varepsilon_k}$ be a measure-valued solution of non-local Euler-Korteweg equation. Using \eqref{eq:initial_entropy}, {\eqref{eq:nonnegativity_rel_entropy_explicit} and \eqref{eq:rho-Rho_via_Ponce_bar}}, we have
$$
{\|\rho_{\eta_k} - \rho_{\eta_k, \varepsilon_k}\|_{L^2((0,T)\times\Td)} \leq C\left(\varepsilon^2_k + \|\bu_{0,\varepsilon_k}\|^2_{L^2(\Td)} + \eps_k^{4}\, \|\rho_{\eta_k}\|_{C^{2,1}}^2 \, \left\| \frac{1}{\rho_{\eta_k}}  \right\|_{\infty}^2 \right) \, e^{CT\|\rho_{\eta_k}\|_{C^{2,1}}/\eta^{d+3}}.}
$$
Of course, the quantity {$\|\rho_{\eta_k}\|_{C^{2,1}}^2 \, \left\| \frac{1}{\rho_{\eta_k}}  \right\|_{\infty}^2 \, e^{CT\|\rho_{\eta_k}\|_{C^{2,1}}/\eta_k^{d+3}}$} is blowing up as $\eta_k \to 0$ (because we lose parabolicity), nevertheless we can choose $\varepsilon_k$ so small to obtain convergence to 0. The conclusion follows by triangle inequality.
\end{proof}

\section{Convergence result for the parametrized measure $\nu^{\eta, \eps}$ and the concentration measures $m_{\eta, \eps}$}
Theorems \ref{thm:conv_EK_CH_loc} and \ref{thm:conv_EK_CH_nonloc} answer the question of what happens with the function $\rho_{\eta, \varepsilon}$ when $\eta, \varepsilon \to 0$. However, the measure-valued solution $(\rho_{\eta, \varepsilon}, \overline{\sqrt{\rho_{\eta, \varepsilon}}\bu_{\eta, \varepsilon}}, \nu^{\eta, \varepsilon},m_{\eta, \varepsilon})$ is in fact a collection of four components. Below, we address the question of convergence of the other components: $\overline{\sqrt{\rho_{\eta, \varepsilon}}\bu_{\eta, \varepsilon}}$, $\nu^{\eta, \varepsilon}$, $m_{\eta, \varepsilon}$. We provide a detailed proof only for the situation in Theorem \ref{thm:conv_EK_CH_nonloc}. Adaptation to the case analyzed in Theorem \ref{thm:conv_EK_CH_loc} is straightforward. \\

We first recall some basic notions from measure theory. We consider the set $\R^+\times \R^d$ and we write $(\lambda_1, \lambda')$ for a given element of this set where $\lambda_1 \in \R^+$ and $\lambda' \in \R^d$ as in Section \ref{sect:def_diss_mvs}. For two probability measures $\mu$, $\nu$ on $\R^+\times \R^d$ with a finite second moment, that is,
$$
\int_{\R^+\times \R^d} \left( |\lambda_1|^2 + |\lambda'|^2 \right) \diff \mu(\lambda_1, \lambda') < \infty, \quad \int_{\R^+\times \R^d} \left( |\lambda_1|^2 + |\lambda'|^2 \right) \diff \nu(\lambda_1, \lambda') < \infty,
$$
the Wasserstein distance $\mathcal{W}_2(\mu, \nu)$ is defined as
\begin{equation}\label{eq:def_W2_metric}
\mathcal{W}_2(\mu, \nu)^2 = \inf_{\pi \in \Pi(\mu, \nu)} \int_{(\R^+\times \R^d)^2} \left[ \left|\lambda_1 - \widetilde{\lambda_1}\right|^2 + \left|\lambda' - \widetilde{\lambda'}\right|^2  \right] \diff \pi\left(\lambda_1, \lambda', \widetilde{\lambda_1}, \widetilde{\lambda'}\right),
\end{equation}
where the set $\Pi(\mu, \nu)$ is the set of couplings between $\mu, \nu$; that is, the set of measures $\pi$ on the product $(\R^+\times \R^d)^2$ such that
$$
\pi(A \times (\R^+\times \R^d)) = \mu(A), \qquad \qquad \pi( (\R^+\times \R^d) \times B) = \nu(B).
$$
Furthermore, for a measure $\mu$ on some space $X$, the total variation of $\mu$ is defined as
$$
\|\mu\|_{TV} = |\mu|(X),
$$
where $|\mu|(A)= \mu^+(A)-\mu^-(A)$ and $\mu^+$, $\mu^-$ are positive and negative parts of $\mu$, respectively. Note that if $\mu$ is a nonnegative measure, $\|\mu\|_{TV} = \mu(X)$. For more on spaces of measures and related norms, we refer to \cite[Chapter 1]{MR4309603}.
\begin{thm}\label{thm:convergence_YM_only_eps}
Under the notation of Theorem \ref{thm:conv_EK_CH_nonloc}, the function $\overline{\sqrt{\rho_{\eta, \varepsilon}}\bu_{\eta, \varepsilon}}$ converges to 0 in $L^{\infty}(0,T;L^2(\Td))$:
\begin{equation}\label{eq:conv_another_parts_mvs_1}
\esssup_{t\in(0,T)} \int_{\Td} |\overline{\sqrt{\rho_{\eta, \varepsilon}}\bu_{\eta, \varepsilon}}|^2 \diff x \to 0 \mbox{ as } \eps \to 0.
\end{equation}
Moreover, the parametrized measure
$
\nu^{\eta,\eps} \in L^{\infty}_{\text{weak}}((0,T)\times\Td;\mathcal{P}([0,+\infty)\times\mathbb{R}^{d}))
$
converges to $\delta_{\rho_{\eta}(t,x)} \otimes \delta_{\bm{0}}$ in the following sense
\begin{equation}\label{eq:conv_another_parts_mvs_2}
\esssup_{t\in(0,T)} \int_{\Td} \left[\mathcal{W}_2(\nu^{\eta, \eps}, \delta_{\rho_{\eta}(t,x)}\otimes \delta_{\bm{0}})\right]^2\diff x \to 0 \mbox{ as } \varepsilon \to 0.
\end{equation}
Furthermore, the concentration measures vector $m_{\eta,\eps}$ converges to 0 in the total variation norm, uniformly in time:
\begin{equation}\label{eq:conv_another_parts_mvs_3}
\esssup_{t\in(0,T)} \|m_{\eta,\eps}(t,\cdot)\|_{TV} \to 0 \mbox{ as } \varepsilon \to 0.
\end{equation}
\end{thm}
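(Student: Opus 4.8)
We sketch the argument; everything will follow by combining the relative entropy estimate \eqref{est:relative_entropy_4} with the structural decomposition of $\Theta$ from Lemma \ref{lem:rel_entr_nonneg} and the comparison inequalities \eqref{eq:inequality_2_conc_measures}--\eqref{eq:inequality_2_conc_measures_func_p_F}. The key preliminary remark is that, exactly as in the proof of Theorem \ref{thm:conv_EK_CH_nonloc}, for fixed $\eta \in (0,\eta_0)$ we have $\esssup_{t\in(0,T)}\Theta(t) \to 0$ as $\eps\to 0$, because the right-hand side of \eqref{est:relative_entropy_4} does not depend on $t$; here $(\Rho,\Bu) = (\rho_\eta, -\eps\nabla(F'(\rho_\eta)-B_\eta(\rho_\eta)))$, so $\|\Bu\|_\infty \le \eps\, C(\|\rho_\eta\|_{C^{2,1}})$, and $\rho_{\eta,\eps}$ is bounded in $L^\infty(0,T;L^2(\Td))$ uniformly in $\eps$ by the dissipative energy inequality and the growth of $F$. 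For the momentum term \eqref{eq:conv_another_parts_mvs_1} I would write $\overline{\sqrt{\rho_{\eta,\eps}}\bu_{\eta,\eps}} = \langle \lambda', \nu_{t,x}\rangle = \langle \lambda'-\sqrt{\lambda_1}\,\Bu,\nu_{t,x}\rangle + \Bu\,\langle\sqrt{\lambda_1},\nu_{t,x}\rangle$; Jensen's inequality gives $|\langle \lambda'-\sqrt{\lambda_1}\Bu,\nu_{t,x}\rangle|^2 \le \langle|\lambda'-\sqrt{\lambda_1}\Bu|^2,\nu_{t,x}\rangle \le \overline{\rho|\bu-\Bu|^2}$, whose spatial integral is $\le \Theta(t)$ by \eqref{eq:nonnegativity_rel_entropy_explicit}, while $\langle\sqrt{\lambda_1},\nu_{t,x}\rangle \le \sqrt{\rho_{\eta,\eps}}$ is bounded in $L^\infty(0,T;L^2(\Td))$. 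Hence $\esssup_t \|\overline{\sqrt{\rho_{\eta,\eps}}\bu_{\eta,\eps}}\|_{L^2(\Td)}^2 \le C\,\esssup_t\Theta(t) + C\eps^2 \to 0$.

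For the parametrized measure I would exploit that a Dirac mass admits a unique coupling with any probability measure, which gives the explicit identity $\big[\mathcal{W}_2(\nu^{\eta,\eps}_{t,x}, \delta_{\rho_\eta(t,x)}\otimes\delta_{\bm 0})\big]^2 = \langle |\lambda_1 - \rho_\eta|^2 + |\lambda'|^2, \nu_{t,x}\rangle$ (the second moments being finite for a.e. $(t,x)$ since $\langle\lambda_1^2,\nu_{t,x}\rangle \le \overline{\rho^2}$ and $\langle|\lambda'|^2,\nu_{t,x}\rangle \le \overline{\rho|\bu|^2}$ lie in $L^\infty(0,T;L^1(\Td))$). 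For the density part, the identity $\overline{|\rho-\rho_\eta|^2} = \langle (\lambda_1-\rho_\eta)^2, \nu_{t,x}\rangle + m^{\rho^2}$ (cf. \eqref{eq:def_square_diff_densities} and $\overline{\rho^2} = \langle\lambda_1^2,\nu\rangle + m^{\rho^2}$) together with $m^{\rho^2}\ge 0$, the nonlocal Poincaré inequality \eqref{eq:nonloc_Poinc_stronger_bar} applied with $P = \rho_\eta$, and \eqref{eq:nonnegativity_rel_entropy_explicit} yields $\int_{\Td}\langle(\lambda_1-\rho_\eta)^2,\nu_{t,x}\rangle\diff x \le \int_{\Td}\overline{|\rho-\rho_\eta|^2}\diff x \le \f{C_P}{\kappa}\Theta(t)$. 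For the velocity part, $|\lambda'|^2 \le 2|\lambda'-\sqrt{\lambda_1}\Bu|^2 + 2\lambda_1|\Bu|^2$ gives $\int_{\Td}\langle|\lambda'|^2,\nu_{t,x}\rangle\diff x \le 2\int_{\Td}\overline{\rho|\bu-\Bu|^2}\diff x + 2\|\Bu\|_\infty^2\|\rho_{\eta,\eps}\|_{L^1(\Td)} \le 2\Theta(t) + C\eps^2$. Adding the two contributions and taking $\esssup_t$ gives \eqref{eq:conv_another_parts_mvs_2}.

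For the concentration measures I would show each component has total mass $\le C\,\Theta(t)$. The nonnegative ones are handled by discarding the nonnegative ``Young part'': $\overline{|\rho-\rho_\eta|^2} = \langle(\lambda_1-\rho_\eta)^2,\nu\rangle + m^{\rho^2}$ gives $\int_{\Td}m^{\rho^2}(t,\diff x) \le \f{C_P}{\kappa}\Theta(t)$, and $\overline{\rho|\bu-\Bu|^2} = \langle|\lambda'-\sqrt{\lambda_1}\Bu|^2,\nu\rangle + m^{\rho|\bu|^2}$ gives $\int_{\Td}m^{\rho|\bu|^2}(t,\diff x) \le \Theta(t)$. For $m^{F(\rho)}$ I would reuse the splitting $F=F_1+F_2$ from the proof of Lemma \ref{lem:rel_entr_nonneg}, writing $m^{F(\rho)} = \overline{F(\rho|\rho_\eta)} - \langle F_1(\lambda_1|\rho_\eta),\nu\rangle - \langle F_2(\lambda_1|\rho_\eta),\nu\rangle$, then using convexity of $F_1$, the Taylor bound $\langle F_2(\lambda_1|\rho_\eta),\nu\rangle \ge -\f{1}{2}\|F_2''\|_\infty\langle(\lambda_1-\rho_\eta)^2,\nu\rangle$, the bound $\int_{\Td}\overline{F(\rho|\rho_\eta)}\diff x \le \Theta(t)$ (the kinetic and nonlocal summands of $\Theta$ being nonnegative) and $\int_{\Td}\langle(\lambda_1-\rho_\eta)^2,\nu\rangle\diff x \le \f{C_P}{\kappa}\Theta(t)$, to get $\int_{\Td}m^{F(\rho)}(t,\diff x) \le C\Theta(t)$. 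Finally the two signed components are controlled via the comparison inequalities: $\|m^{\rho\bu\otimes\bu}(t,\cdot)\|_{TV} \le m^{\rho|\bu|^2}(t,\Td)$ by \eqref{eq:inequality_2_conc_measures} and $\|m^{\rho F'(\rho)}(t,\cdot)\|_{TV} \le C_F\,m^{F(\rho)}(t,\Td) + C_F\,m^{\rho^2}(t,\Td)$ by \eqref{eq:inequality_2_conc_measures_func_p_F}. Summing and taking $\esssup_t$ yields \eqref{eq:conv_another_parts_mvs_3}. The main obstacle is precisely this last bookkeeping: unlike the kinetic and nonlocal pieces, $\overline{F(\rho|\rho_\eta)}$ is not sign-definite, so bounding $m^{F(\rho)}$ requires carrying through the same $F_1/F_2$ compensation used in Lemma \ref{lem:rel_entr_nonneg} rather than a direct estimate; everything else is a routine combination of Jensen's inequality, the nonlocal Poincaré inequality, and the already-established convergence $\Theta(t)\to 0$.
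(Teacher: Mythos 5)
Your proposal is correct and follows essentially the same strategy as the paper: convergence of $\esssup_t\Theta(t)$, the decomposition and Poincaré argument of Lemma \ref{lem:rel_entr_nonneg} for the density and concentration parts, the product coupling for the Wasserstein bound, the $F_1/F_2$ splitting for $m^{F(\rho)}$, and the comparison inequalities \eqref{eq:inequality_2_conc_measures}--\eqref{eq:inequality_2_conc_measures_func_p_F} for the signed measures. The only (harmless) deviation is in the kinetic/momentum terms, where you use the elementary splitting $\lambda'=(\lambda'-\sqrt{\lambda_1}\Bu)+\sqrt{\lambda_1}\Bu$ together with $|\Bu|\le C\eps$ and mass conservation, and read off $m^{\rho|\bu|^2}(t,\Td)\le\Theta(t)$ directly from the nonnegative decomposition of $\overline{\rho|\bu-\Bu|^{2}}$, whereas the paper expands $\overline{\rho|\bu-\Bu|^{2}}$ as in \eqref{eq:split_rel_kin_energy_three_terms} and controls the cross term $\int_{\Td}\overline{\rho\bu}\cdot\Bu\diff x$ via the dissipative energy bound; both routes yield \eqref{eq:conv_another_parts_mvs_1}--\eqref{eq:conv_another_parts_mvs_3}.
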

\begin{proof}
From the proof of Theorem \ref{thm:conv_EK_CH_nonloc}, we know that $\sup_{t\in(0,T)} \Theta(t) \to 0$ where $\Theta(t)$ is defined as in \eqref{eq:relative_entropy} with $\rho := \rho_{\eta,\eps}$, $\Rho := \rho_{\eta}$, $\bu := \bu_{\eta,\eps}$ and
\begin{equation}\label{eq:form_of_Bu_with_rho_eta}
\Bu := -\eps\nabla(F'(\rho_\eta)-B_{\eta}(\rho_\eta)).
\end{equation}
Due to Lemma \ref{lem:rel_entr_nonneg} this yields
\begin{equation}\label{eq:convergence_2_terms_in_rel_entropy}
\sup_{t\in(0,T)} \int_{\Td}\f{1}{2}\overline{\rho_{\eta,\eps}|\bu_{\eta,\eps}-\Bu|^{2}} \diff x +\f{\kappa}{4\eta^{2}}\int_{\Td}\overline{\int_{\Td}\omega_{\eta}(y)|(\rho_{\eta,\eps}-\rho_{\eta})(x)-(\rho_{\eta,\eps}-\rho_{\eta})(x-y)|^{2}\diff y} \diff x \to 0    
\end{equation}
and these two quantities are nonnegative. First, by Poincaré inequality \eqref{eq:nonloc_Poinc_stronger_bar} and \eqref{eq:def_square_diff_densities}, we have
\begin{equation}\label{eq:conv_W2_first_part_with_conc_measure}
\esssup_{t\in(0,T)} \int_{\Td} \int_{\R^+ \times \R^d} |\lambda_1 - \rho_{\eta}(t,x)|^2 \diff \nu^{\eta,\eps}_{t,x}(\lambda_1,\lambda')\diff x + m^{\rho^2}_{\eta,\eps}(t,\Td)  \to 0.
\end{equation}
In particular,
\begin{equation}\label{eq:conv_W2_first_part}
\esssup_{t\in(0,T)} \int_{\Td} \int_{\R^+} \int_{\R^+ \times \R^d} |\lambda_1 - \widetilde{\lambda_1}(t,x)|^2 \diff \nu^{\eta,\eps}_{t,x}(\lambda_1,\lambda') \diff \delta_{\rho_{\eta}(t,x)}(\widetilde{\lambda_1}) \diff x \to 0.
\end{equation}
Second, due to \eqref{eq:def_rel_kin_energy}, we can expand the term $\int_{\Td}\f{1}{2}\overline{\rho_{\eta,\eps}|\bu_{\eta,\eps}-\Bu|^{2}}$ into three integrals:
\begin{equation}\label{eq:split_rel_kin_energy_three_terms}
\frac{1}{2} \int_{\Td} \overline{\rho_{\eta,\eps}\,|\bu_{\eta,\eps} |^2} \diff x - \int_{\Td} \overline{\rho_{\eta,\eps}\, \bu_{\eta, \eps}} \cdot \Bu \diff x + \frac{1}{2} \int_{\Td} {\rho_{\eta,\eps}}\, |\Bu|^2 \diff x.
\end{equation}
We claim that the second and third term converge to 0. For the third term, this follows easily from \eqref{eq:form_of_Bu_with_rho_eta}, nonnegativity of $\rho_{\eta,\eps}$ and the conservation of mass $\int_{\Td} \rho_{\eta,\eps} \diff x = \int_{\Td} {\rho^0} \diff x$. Concerning the second term, by the dissipativity (Definition \ref{def:diss_mvs}) and nonnegativity of the energy (Lemma \ref{lem:nonneg_the_energy_one}), we have the uniform estimate
\begin{multline*}
\left| \int_{\Td} \overline{\rho_{\eta,\eps}\, \bu_{\eta, \eps}} \diff x \right| = \left| \int_{\Td} \left\langle \sqrt{\lambda_1}\, \lambda' , \nu^{\eta,\eps} \right\rangle \diff x \right| \leq \frac{1}{2} \int_{\Td} \langle \lambda_1, \nu^{\eta,\eps} \rangle \diff x+ \frac{1}{2} \int_{\Td} \langle |\lambda'|^2, \nu^{\eta,\eps} \rangle \diff x \leq \\ \leq \frac{1}{2} \int_{\Td} \rho_{\eta,\eps} \diff x + \frac{1}{2} \int_{\Td} \overline{|\sqrt{\rho_{\eta,\eps}}\bu_{\eta,\eps}|^2} \diff x = \frac{1}{2} \int_{\Td} {\rho^0} \diff x + \frac{1}{2} \int_{\Td} \overline{|\sqrt{\rho_{\eta,\eps}}\bu_{\eta,\eps}|^2} \diff x \leq C
\end{multline*}
As $|\Bu|\leq C\varepsilon$, we conclude that $\esssup_{t\in(0,T)} \left| \int_{\Td} \overline{\rho_{\eta,\eps}\, \bu_{\eta, \eps}} \cdot \Bu \diff x \right| \to 0$ as $\varepsilon \to 0$ so that \eqref{eq:split_rel_kin_energy_three_terms} implies
$$
\esssup_{t\in(0,T)} \frac{1}{2} \int_{\Td} \overline{\rho_{\eta,\eps}\,|\bu_{\eta,\eps} |^2} \diff x  \to 0.
$$
Again, we can write it as
\begin{equation}\label{eq:conv_W2_second_part_with_conc_measure}
\esssup_{t\in(0,T)} \int_{\Td} \int_{\R^+ \times \R^d} |\lambda'|^2 \diff \nu^{\eta,\eps}_{t,x}(\lambda_1,\lambda') \diff x + m^{\rho\,|\bu|^2}_{\eta,\eps}(t,\Td) \to 0
\end{equation}
which implies 
\begin{equation}\label{eq:conv_W2_second_part}
\esssup_{t\in(0,T)} \int_{\Td} \int_{\R^d} \int_{\R^+ \times \R^d} |\lambda' - \widetilde{\lambda'}|^2 \diff \nu^{\eta,\eps}_{t,x}(\lambda_1,\lambda') \diff \delta_{\bm{0}}(\widetilde{\lambda'}) \diff x  \to 0.
\end{equation}
Now, as the product measure $\nu^{\eta,\eps}_{t,x}(\lambda_1, \lambda') \otimes \delta_{\rho_{\eta}}(\widetilde{\lambda_1}) \otimes \delta_{\bm{0}}(\widetilde{\lambda'})$ is an admissible coupling between $\nu^{\eta,\eps}_{t,x}$ and $\delta_{\rho_{\eta}} \otimes \delta_{\bm{0}}$ we can estimate the infimum in \eqref{eq:def_W2_metric} by
\begin{multline*}
\left[\mathcal{W}_2\left(\nu^{\eta,\eps}_{t,x},  \delta_{\rho_{\eta}} \otimes \delta_{\bm{0}}  \right) \right]^2 \leq
\int_{\R^+} \int_{\R^+ \times \R^d} |\lambda_1 - \widetilde{\lambda_1}(t,x)|^2 \diff \nu^{\eta,\eps}_{t,x}(\lambda_1,\lambda') \diff \delta_{\rho_{\eta}(t,x)}(\widetilde{\lambda_1})
\,+  \\ +
\int_{\R^d} \int_{\R^+ \times \R^d} |\lambda' - \widetilde{\lambda'}|^2 \diff \nu^{\eta,\eps}_{t,x}(\lambda_1,\lambda') \diff \delta_{\bm{0}}(\widetilde{\lambda'}) 
\end{multline*}
so that integrating over $\Td$ and taking $\esssup_{t\in(0,T)}$ we conclude the proof of \eqref{eq:conv_another_parts_mvs_2} due to \eqref{eq:conv_W2_first_part} and \eqref{eq:conv_W2_second_part}. Furthermore, by Jensen's inequality
$$
\int_{\Td} |\overline{\sqrt{\rho_{\eta, \varepsilon}}\bu_{\eta, \varepsilon}}|^2 {\diff x}= \int_{\Td} |\langle \lambda', \nu^{\eta,\eps} \rangle|^2 \diff x \leq
\int_{\Td} \langle |\lambda'|^2, \nu^{\eta,\eps} \rangle { \diff x}.
$$
Taking $\esssup_{t\in(0,T)}$ and using \eqref{eq:conv_W2_second_part_with_conc_measure}, we arrive at \eqref{eq:conv_another_parts_mvs_1}.\\

Finally, we study the concentration measures. From \eqref{eq:conv_W2_first_part_with_conc_measure} and \eqref{eq:conv_W2_second_part_with_conc_measure} we know that 
$$
\esssup_{t\in(0,T)} m^{\rho^2}_{\eta,\eps}(t,\Td),\quad \esssup_{t\in(0,T)} m^{\rho\,|\bu|^2}_{\eta,\eps}(t,\Td) \to 0 \mbox{ as } \eps \to 0.
$$
Using \eqref{eq:inequality_2_conc_measures} we obtain the same for $|m^{\rho \, \bu\otimes \bu}_{\eta,\eps}|$. It remains to study $m^{F(\rho)}_{\eta, \eps}$ and $m^{\rho\,F'(\rho)}_{\eta, \eps}$. In fact, if we prove that $\esssup_{t\in(0,T)} m^{F(\rho)}_{\eta, \eps}(t,\Td)$ converges to $0$ as $\eps \to 0$, the same will be true for $\left|m^{\rho\,F'(\rho)}_{\eta, \eps}\right|$ due to \eqref{eq:inequality_2_conc_measures_func_p_F}.\\

By $\sup_{t\in(0,T)} \Theta(t) \to 0$ and \eqref{eq:convergence_2_terms_in_rel_entropy}, we have that
$$
\sup_{t\in(0,T)} \int_{\Td} \overline{F(\rho_{\eta,\eps}|\rho_{\eta})} \, {  \diff x} \to 0 \mbox{ as } \eps \to 0.
$$
We can write $\overline{F(\rho_{\eta,\eps}|\rho_{\eta})}$ as (cf. \eqref{eq:F_rel_pot_splitted_for_two_parts_mvs}) 
\begin{equation}\label{eq:F_rho_eta_eta_eps_splitting}
\begin{split}
\overline{F(\rho_{\eta,\eps}|\rho_{\eta})} = &\left \langle F_1(\lambda_1) - F_1(\rho_{\eta}) - F_1'(\rho_{\eta}) (\lambda_1 - \rho_{\eta}), \nu^{\eta,\eps} \right \rangle
+\\ &\qquad \qquad+ \left \langle F_2(\lambda_1) - F_2(\rho_{\eta}) - F_2'(\rho_{\eta}) (\lambda_1 - \rho_{\eta}), \nu^{\eta,\eps} \right \rangle  + m^{F(\rho)}_{\eta, \eps}.
\end{split}
\end{equation}
The first term is nonnegative while the second converges to 0. Indeed, it can be bounded by $\|F_2''\|_{\infty} \left \langle (\lambda_1 - \rho_{\eta})^2, \nu_{t,x} \right \rangle$ which can be estimated due to inequality (cf. \eqref{eq:F_rel_pot_splitted_for_two_parts_mvs_2}):
$$
 \|F_2''\|_{\infty} \int_{\Td} \left \langle (\lambda_1 - \rho_{\eta})^2, \nu_{t,x} \right \rangle \diff x  \leq 
\frac{1-\kappa}{4\eta^2}\int_{\Td}\overline{\int_{\Td}\omega_{\eta}(y)|(\rho-\rho_{\eta})(x)-(\rho-\rho_{\eta})(x-y)|^{2}\diff y} \diff x
$$
for some $\kappa \in (0,1)$. Thanks to \eqref{eq:convergence_2_terms_in_rel_entropy},
$$ 
\esssup_{t\in(0,T)} \int_{\Td} \left|\left \langle F_2(\lambda_1) - F_2(\rho_{\eta}) - F_2'(\rho_{\eta}) (\lambda_1 - \rho_{\eta}), \nu^{\eta,\eps} \right \rangle\right| \diff x \to 0 \mbox{ as } \eps \to 0.
$$
Due to \eqref{eq:F_rho_eta_eta_eps_splitting}, the proof of \eqref{eq:conv_another_parts_mvs_3} is concluded.
\end{proof}

We can also formulate a similar result to Theorem \ref{thm:convergence_YM_only_eps} in the context of Theorem \ref{thm:conv_EK_CH_loc}. The proof is the same as the one of Theorem \ref{thm:convergence_YM_only_eps}.
\begin{thm}\label{thm:convergence_YM_eps_and_eta}
Under the notation of Theorem \ref{thm:conv_EK_CH_loc}, the function $\overline{\sqrt{\rho_{\eta_k, \varepsilon_k}}\bu_{\eta_k, \varepsilon_k}}$ converges to 0 in $L^{\infty}(0,T;L^2(\Td))$:
\begin{equation*}
\esssup_{t\in(0,T)}  \int_{\Td} |\overline{\sqrt{\rho_{\eta_k, \varepsilon_k}}\bu_{\eta_k, \varepsilon_k}}|^2 \diff x  \to 0 \mbox{ as } \eps_k, \eta_k \to 0.
\end{equation*}
Moreover, the parametrized measure
$
\nu^{\eta_k,\eps_k} \in L^{\infty}_{\text{weak}}((0,T)\times\Td;\mathcal{P}([0,+\infty)\times\mathbb{R}^{d}))
$
converges to $\delta_{\rho(t,x)} \otimes \delta_{\bm{0}}$ in the following sense:
\begin{equation*} 
\int_0^T \int_{\Td} \left[\mathcal{W}_2(\nu^{\eta_k, \eps_k}, \delta_{\rho(t,x)}\otimes \delta_{\bm{0}})\right]^2\diff x \diff t \to 0 \mbox{ as } \varepsilon_k, \eta_k \to 0.
\end{equation*}
Furthermore, the concentration measures vector $m_{\eta_k,\eps_k}$ converges to 0 in the total variation norm, uniformly in time:
\begin{equation*}
\esssup_{t\in(0,T)} \|m_{\eta_k,\eps_k}(t,\cdot)\|_{TV} \to 0 \mbox{ as } \varepsilon_k, \eta_k \to 0.
\end{equation*}
\end{thm}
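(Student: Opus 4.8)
The plan is to repeat the proof of Theorem~\ref{thm:convergence_YM_only_eps} almost verbatim, the only new point being that the limiting parametrized measure must now be identified with $\delta_{\rho(t,x)}\otimes\delta_{\bm{0}}$, where $\rho$ solves the \emph{local} Cahn--Hilliard equation, rather than with $\delta_{\rho_{\eta_k}(t,x)}\otimes\delta_{\bm{0}}$.

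First I would recall from the proof of Theorem~\ref{thm:conv_EK_CH_loc} that the sequence $\{\eps_k\}$ is chosen (in terms of $\eta_k$ and $T$) precisely so that the relative entropy $\Theta_k(t)$ between $\rho_{\eta_k,\eps_k}$ and $\rho_{\eta_k}$ --- i.e.\ \eqref{eq:relative_entropy} with $\rho=\rho_{\eta_k,\eps_k}$, $\Rho=\rho_{\eta_k}$, $\bu=\bu_{\eta_k,\eps_k}$ and $\Bu=-\eps_k\nabla(F'(\rho_{\eta_k})-B_{\eta_k}(\rho_{\eta_k}))$ --- satisfies $\esssup_{t\in(0,T)}\Theta_k(t)\to 0$. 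Here $\rho_{\eta_k}\in C^{2,1}$ is bounded away from $0$ by Theorem~\ref{thm:class_sol_NLCH} and Lemma~\ref{lem:class_sol_approx}, and since the right-hand side of \eqref{est:relative_entropy_4} does not depend on $t$, no smallness of $\eps_k$ beyond what Theorem~\ref{thm:conv_EK_CH_loc} already requires is needed. Granting $\esssup_t\Theta_k(t)\to0$, the arguments from the proof of Theorem~\ref{thm:convergence_YM_only_eps} apply without change: by Lemma~\ref{lem:rel_entr_nonneg} the vanishing of $\Theta_k$ forces $\esssup_t\int_{\Td}\overline{\rho_{\eta_k,\eps_k}|\bu_{\eta_k,\eps_k}-\Bu|^2}\,\diff x\to0$; expanding this by \eqref{eq:def_rel_kin_energy}, using $|\Bu|\le C\eps_k$, conservation of mass and the dissipativity and nonnegativity of the energy (Lemma~\ref{lem:nonneg_the_energy_one}), one gets $\esssup_t\int_{\Td}\overline{\rho_{\eta_k,\eps_k}|\bu_{\eta_k,\eps_k}|^2}\,\diff x\to0$, hence also $\esssup_t\bigl(\int_{\Td}\langle|\lambda'|^2,\nu^{\eta_k,\eps_k}\rangle\,\diff x+m^{\rho|\bu|^2}_{\eta_k,\eps_k}(t,\Td)\bigr)\to0$; the first claim of the theorem then follows by Jensen's inequality, and the convergence to $0$ in total variation of all components of $m_{\eta_k,\eps_k}$ follows exactly as before (using \eqref{eq:inequality_2_conc_measures}, \eqref{eq:inequality_2_conc_measures_func_p_F} and the splitting \eqref{eq:F_rho_eta_eta_eps_splitting} to handle $m^{F(\rho)}$ and $m^{\rho F'(\rho)}$).

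Next I would prove the Wasserstein convergence. As in Theorem~\ref{thm:convergence_YM_only_eps}, testing the infimum in \eqref{eq:def_W2_metric} against the product coupling $\nu^{\eta_k,\eps_k}_{t,x}\otimes\delta_{\rho_{\eta_k}(t,x)}\otimes\delta_{\bm{0}}$ gives the pointwise bound
\[
\bigl[\mathcal{W}_2\bigl(\nu^{\eta_k,\eps_k}_{t,x},\delta_{\rho_{\eta_k}(t,x)}\otimes\delta_{\bm{0}}\bigr)\bigr]^2\le\int_{\R^+\times\R^d}|\lambda_1-\rho_{\eta_k}(t,x)|^2\,\diff\nu^{\eta_k,\eps_k}_{t,x}+\int_{\R^+\times\R^d}|\lambda'|^2\,\diff\nu^{\eta_k,\eps_k}_{t,x},
\]
whose space integral and $\esssup_t$ tend to $0$: the first term via the Poincaré inequality \eqref{eq:nonloc_Poinc_stronger_bar} and \eqref{eq:def_square_diff_densities} (which control $\langle|\lambda_1-\rho_{\eta_k}|^2,\nu^{\eta_k,\eps_k}\rangle$ by the nonlocal term of $\Theta_k$), the second as in the previous paragraph. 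To pass to the target $\delta_{\rho(t,x)}\otimes\delta_{\bm{0}}$ I would use the triangle inequality for $\mathcal{W}_2$ and the elementary identity $\mathcal{W}_2(\delta_a\otimes\delta_{\bm{0}},\delta_b\otimes\delta_{\bm{0}})=|a-b|$, giving
\[
\int_0^T\!\!\int_{\Td}\bigl[\mathcal{W}_2\bigl(\nu^{\eta_k,\eps_k},\delta_{\rho}\otimes\delta_{\bm{0}}\bigr)\bigr]^2\le 2\int_0^T\!\!\int_{\Td}\bigl[\mathcal{W}_2\bigl(\nu^{\eta_k,\eps_k},\delta_{\rho_{\eta_k}}\otimes\delta_{\bm{0}}\bigr)\bigr]^2+2\int_0^T\!\!\int_{\Td}|\rho_{\eta_k}-\rho|^2,
\]
where the first term tends to $0$ by the bound above and the second tends to $0$ because $\rho_{\eta_k}\to\rho$ in $L^2((0,T)\times\Td)$ by \cite{MR4574535}. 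This last, merely space--time (not uniform-in-time), convergence is exactly why the Wasserstein statement of the theorem is formulated with the integral $\int_0^T\!\int_{\Td}$ rather than with $\esssup_t\int_{\Td}$ as in Theorem~\ref{thm:convergence_YM_only_eps}.

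The main --- and rather mild --- obstacle is the bookkeeping of the two coupled limits: one must check that the $\eps_k$ already fixed in Theorem~\ref{thm:conv_EK_CH_loc} force $\esssup_t\Theta_k(t)\to0$ (so that all the $\esssup_t$ statements hold), which is automatic since \eqref{est:relative_entropy_4} is $t$-independent, and that the weaker space--time mode of convergence $\rho_{\eta_k}\to\rho$ is propagated correctly into the Wasserstein estimate via the triangle inequality. Everything else is an unchanged transcription of the proof of Theorem~\ref{thm:convergence_YM_only_eps}.
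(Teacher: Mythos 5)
Your proposal is correct and follows essentially the same route as the paper, whose proof of this theorem consists solely of the remark that it is identical to the proof of Theorem~\ref{thm:convergence_YM_only_eps}. The only genuinely new ingredient you supply --- the triangle inequality for $\mathcal{W}_2$ together with $\mathcal{W}_2(\delta_a\otimes\delta_{\bm{0}},\delta_b\otimes\delta_{\bm{0}})=|a-b|$ and the $L^2((0,T)\times\Td)$ convergence $\rho_{\eta_k}\to\rho$, which also explains the space--time (rather than uniform-in-time) formulation of the Wasserstein statement --- is exactly the detail the paper leaves implicit, so your write-up is a faithful (and slightly more explicit) version of the intended argument.
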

\appendix

\section{Some inequalities}

\begin{lem}
Let $\sigma>0$ and a final time $T>0$. Let $\underline{u}$ be defined by $\underline{u}(t)=\sigma\exp\left(-\int_{0}^{t}\norm{\DIV b}_{L^{\infty}}(s)\diff s\right)$ and $\phi_{\delta}$ defined in~\eqref{eq:phi_delta}. Then
\begin{equation*}
\int_{\Td}\Delta\phi_{\delta}(u)\sgn^{-}(u-\underline{u}){\diff x}\ge 0.     
\end{equation*}
\end{lem}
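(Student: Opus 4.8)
This is a standard sign property, and the only structural input is that $\underline u$ is a function of $t$ alone, so that $\nabla\underline u=0$; the exact formula for $\underline u$ plays no further role. Combined with the monotonicity $\phi_\delta'\ge0$ recorded right after \eqref{eq:phi_delta}, the whole statement reduces to a single integration by parts once the discontinuous multiplier $\sgn^-$ has been regularised.

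Concretely, I would first pick a family $H_\eps\in C^1(\R)$, $\eps>0$, of nondecreasing functions with $|H_\eps|\le1$, $H_\eps(s)=0$ for $s\ge0$ and $H_\eps(s)=-1$ for $s\le-\eps$, so that $H_\eps'\ge0$ and $H_\eps\to\sgn^-$ pointwise as $\eps\to0$. In the situation in which this lemma is used, $u$ is the classical solution of Lemma~\ref{lem:class_sol_approx}, in particular smooth and bounded in $x$, so all of the computations below are legitimate; for a general $u$ it suffices that $u\in H^1(\Td)$ with $\Delta\phi_\delta(u)\in L^1(\Td)$, and one uses the usual chain rule in Sobolev spaces.

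Since $\Td$ has no boundary, an integration by parts gives
\begin{equation*}
\int_{\Td}\Delta\phi_\delta(u)\,H_\eps(u-\underline u)\diff x=-\int_{\Td}\nabla\phi_\delta(u)\cdot\nabla\bigl(H_\eps(u-\underline u)\bigr)\diff x .
\end{equation*}
Here is the one place the structure of $\underline u$ enters: as $\underline u=\underline u(t)$ is independent of $x$, the chain rule yields $\nabla\bigl(H_\eps(u-\underline u)\bigr)=H_\eps'(u-\underline u)\,\nabla u$, and also $\nabla\phi_\delta(u)=\phi_\delta'(u)\,\nabla u$; hence the right-hand side equals
\begin{equation*}
-\int_{\Td}\phi_\delta'(u)\,H_\eps'(u-\underline u)\,|\nabla u|^2\diff x .
\end{equation*}
Because $\phi_\delta'\ge0$, $H_\eps'\ge0$ and $|\nabla u|^2\ge0$, this integral is of the asserted sign, i.e.\ the claimed inequality holds with $\sgn^-$ replaced by $H_\eps$.

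It then remains to let $\eps\to0$. Since $H_\eps(u-\underline u)\to\sgn^-(u-\underline u)$ pointwise and $|H_\eps|\le1$, dominated convergence (using $\Delta\phi_\delta(u)\in L^1(\Td)$) passes the limit through the left-hand side and produces $\int_{\Td}\Delta\phi_\delta(u)\,\sgn^-(u-\underline u)\diff x$ with the required sign. There is no substantial obstacle: the only point requiring a little care is coupling the approximation of the non-smooth multiplier with the chain rule and the passage to the limit, and this is entirely routine given the regularity of $u$.
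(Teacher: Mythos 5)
Your overall strategy (regularise the multiplier, integrate by parts on $\Td$, use $\phi_\delta'\ge 0$, pass to the limit by dominated convergence) is exactly the paper's, but the decisive sign is read off incorrectly, and as written your argument proves the \emph{reverse} inequality. With your choice of $H_\eps$ nondecreasing ($H_\eps'\ge 0$) the integration by parts gives
\begin{equation*}
\int_{\Td}\Delta\phi_\delta(u)\,H_\eps(u-\underline u)\diff x=-\int_{\Td}\phi_\delta'(u)\,H_\eps'(u-\underline u)\,|\nabla u|^2\diff x\le 0,
\end{equation*}
because the integrand on the right (without the leading minus) is a product of nonnegative factors. This is not ``of the asserted sign'': letting $\eps\to 0$ you obtain $\int_{\Td}\Delta\phi_\delta(u)\,\sgn^-(u-\underline u)\diff x\le 0$ for the convention $\sgn^-=-\mathds{1}_{(-\infty,0)}$ of Section 4, which is precisely the function your $H_\eps$ approximates. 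So the step ``this integral is of the asserted sign'' is false, and the proof does not establish the stated lower bound $\ge 0$.

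The underlying issue is a sign convention that the paper itself handles inconsistently, and your proposal falls into the gap. The appendix proof takes concave approximations $f_\tau$ of $x\mapsto\min\{x,0\}$, so the multiplier $f_\tau'$ is \emph{nonincreasing} ($f_\tau''\le 0$) and approximates $\mathds{1}_{(-\infty,0)}$ (i.e.\ $+1$ on the negative axis, the a.e.\ derivative of $\min\{x,0\}$), not the $\sgn^-$ of Section 4; with that multiplier the same integration by parts yields $-\int_{\Td}\phi_\delta'(u)\,f_\tau''(u-\underline u)\,|\nabla u|^2\diff x\ge 0$, which is the inequality actually needed (up to compensating sign typos) in the maximum-principle argument of Lemma \ref{lem:class_sol_approx}. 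To repair your proof you must flip the monotonicity of the regularisation: choose $H_\eps$ \emph{nonincreasing}, $H_\eps\to\mathds{1}_{(-\infty,0)}$ pointwise, so that $H_\eps'\le 0$ and the right-hand side above becomes nonnegative; your limit passage then goes through verbatim. Keeping instead the Section 4 definition of $\sgn^-$, the true statement is the one with ``$\le$'', and no choice of admissible approximation of that function can give ``$\ge$'' except in degenerate cases.
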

\begin{proof}
We note $f_{\tau}$ a concave approximation as $\tau\to 0$ of the function $f:x\mapsto\min\{x,0\}$. Then $f_{\tau}'$ approximates $f':x\mapsto\sgn^{-}(x)$. 
We have
\begin{align*}
\int_{\Td}\Delta\phi_{\delta}(u)f_{\tau}'(u-\underline{u}){\diff x}&=-\int_{\Td}\phi_{\delta}'(u)f_{\tau}''(u-\underline{u})|\nabla u|^{2}{\diff x}.   
\end{align*}
Since $\phi_{\delta}'\ge 0$, $f_{\tau}''\le 0$ by concavity and we conclude by sending $\tau\to 0$. 
\end{proof}

\begin{lem}\label{lem:Poincaré_nonlocal_H1_L2}
There exists ${\eta_0}>0$ and constant $C_P$ such that for all $\eta \in (0, \eta_0)$ and all $f \in L^2(\Td)$ we have
$$
\| f - (f)_{\Td} \|^2_{L^2(\Td)} \leq  C_P\, \int_{\Td} \int_{\Td}  \frac{| f(x) - f(y)|^2}{4\eta^2} \omega_\eta(|x-y|) \diff x \diff y .
$$
\end{lem}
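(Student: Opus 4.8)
The plan is to diagonalise both sides in the Fourier basis of $\Td = \R^d/\mathbb{Z}^d$ and reduce the estimate to a uniform lower bound on the resulting multiplier. First I fix $\eta_0 \in (0,\tfrac12)$, small enough that for every $\eta \in (0,\eta_0)$ the kernel $\omega_\eta$, which is supported in the ball $B(0,\eta)$, may be viewed canonically as a function on $\Td$; its Fourier coefficients are then $\widehat{\omega_\eta}(k) = \widehat\omega(\eta k)$ with $\widehat\omega(\xi) := \int_{\R^d} \omega(|w|)\cos(2\pi\,\xi\cdot w)\diff w$, which is real and even because $\omega$ is radial. Writing $f = \sum_{k\in\mathbb{Z}^d}\widehat f(k)\, e^{2\pi i k\cdot x}$, the substitution $z = x-y$ on the torus together with Parseval's identity gives
\begin{equation*}
\frac{1}{4\eta^2}\int_{\Td}\int_{\Td} |f(x)-f(y)|^2\,\omega_\eta(|x-y|)\diff x\diff y = \sum_{k\in\mathbb{Z}^d} |\widehat f(k)|^2\, g_\eta(k), \qquad g_\eta(k) := \frac{1 - \widehat\omega(\eta k)}{2\eta^2},
\end{equation*}
whereas $\|f-(f)_{\Td}\|_{L^2(\Td)}^2 = \sum_{k\ne0}|\widehat f(k)|^2$. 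Since $\widehat\omega(0) = \int\omega = 1$ we have $g_\eta(0)=0$, so the lemma will follow, with $C_P = 1/c$, once we produce $c>0$ such that $g_\eta(k)\ge c$ for all $k\in\mathbb{Z}^d\setminus\{0\}$ and all $\eta\in(0,\eta_0)$.

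To obtain this uniform bound I split according to the size of $|\eta k|$. \emph{Low frequencies $|\eta k|\le r_0$.} Using the moment normalisations \eqref{as:omega} --- $\int\omega = 1$, $\int w\,\omega(|w|)\diff w = 0$, $\int w_i w_j\,\omega(|w|)\diff w = \delta_{ij}\tfrac{2D}{d}$, all finite since $\omega$ is compactly supported --- a Taylor expansion of the cosine yields $\widehat\omega(\xi) = 1 - \tfrac{(2\pi)^2 D}{d}\,|\xi|^2 + O(|\xi|^4)$, hence, after shrinking $r_0$ if necessary, $1-\widehat\omega(\eta k)\ge \tfrac{(2\pi)^2 D}{2d}\,|\eta k|^2 \ge \tfrac{(2\pi)^2 D}{2d}\,\eta^2$ because $|k|\ge1$; therefore $g_\eta(k)\ge \tfrac{(2\pi)^2 D}{4d}=:c_1$. \emph{High frequencies $|\eta k|> r_0$.} Here the quadratic approximation is useless, so instead I bound $1-\widehat\omega$ away from $0$ uniformly on $\{|\xi|>r_0\}$: on any bounded annulus $\{r_0\le|\xi|\le R\}$ the continuous function $1-\widehat\omega(\xi) = \int\omega(|w|)\bigl(1-\cos(2\pi\,\xi\cdot w)\bigr)\diff w$ is strictly positive (the integrand is positive outside a measure-zero union of hyperplanes and $\omega\ge0$ is positive on a set of positive measure), while for $|\xi|>R$ with $R$ chosen large the Riemann--Lebesgue lemma applied to $\omega\in L^1(\R^d)$ gives $|\widehat\omega(\xi)|\le\tfrac12$, so $1-\widehat\omega(\xi)\ge\tfrac12$; altogether $1-\widehat\omega(\eta k)\ge m>0$ throughout this regime and $g_\eta(k)\ge \tfrac{m}{2\eta^2}\ge \tfrac{m}{2\eta_0^2}=:c_2$. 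Taking $c = \min(c_1,c_2)$ completes the argument.

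The genuinely delicate point is exactly the high-frequency regime $|k|\sim 1/\eta$: there the nonlocal energy is no longer comparable to $\|\nabla f\|_{L^2}^2$ and the naive Taylor estimate fails, so one must instead exploit that $\widehat\omega$ stays \emph{strictly} below $1$ uniformly, which is where the structural facts $\omega\ge0$, $\int\omega=1$ and the decay of $\widehat\omega$ at infinity enter. Everything else is routine; one small care point is that if the right-hand side of the asserted inequality is infinite there is nothing to prove, so one may assume it finite and thereby justify the exchange of the sum with the $z$-integral in the Parseval step by monotone convergence.
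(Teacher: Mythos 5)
Your argument is correct. The identity
\begin{equation*}
\frac{1}{4\eta^2}\int_{\Td}\int_{\Td}|f(x)-f(y)|^2\,\omega_\eta(|x-y|)\diff x\diff y=\sum_{k}|\widehat f(k)|^2\,\frac{1-\widehat\omega(\eta k)}{2\eta^2}
\end{equation*}
is justified by Tonelli (everything is nonnegative, and in fact the right-hand side of the lemma is always finite for $f\in L^2$), the low-frequency bound follows from the second-moment normalisation in \eqref{as:omega} exactly as you say, and the high-frequency bound uses only that $\widehat\omega$ is continuous, strictly less than $1$ away from $\xi=0$, and tends to $0$ at infinity; the pointwise strict inequality $\widehat\omega(\xi)<1$ for $\xi\neq0$ does require $\omega\ge 0$, which is not written in \eqref{as:omega} but is implicit in the paper's phrase ``usual radial mollification kernel'' (and is used elsewhere in the paper as well), so this is a legitimate hypothesis. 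Note that the paper itself gives no proof of this lemma: it is quoted as a known nonlocal Poincar\'e inequality in the spirit of Ponce and Bourgain--Brezis--Mironescu, whose standard proof proceeds by a compactness--contradiction argument and is valid on general bounded domains, at the price of a non-explicit constant $C_P$. Your route is genuinely different: by diagonalising both sides in the Fourier basis you reduce the claim to a uniform lower bound on the multiplier $g_\eta(k)=(1-\widehat\omega(\eta k))/(2\eta^2)$, which yields an explicit constant depending only on $d$, $D$ and $\omega$, but it is tied to the periodic setting (and to a fixed normalisation of the torus; for a torus of side length $L$ the frequencies are $k/L$ and the constants change accordingly, while the argument is unchanged). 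Within the setting of this paper, where everything is posed on $\Td$ and $\eta_0<1/2$ guarantees $\omega_\eta$ lives on the torus, your proof is complete and, if anything, more quantitative than the compactness argument the paper implicitly invokes.
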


\section{Bound on the relative pressure}\label{app:relative_pressure}
\begin{lem}
Let $F$ satisfy Assumption~\eqref{ass:potentialF}, $p(\rho) = \rho F'(\rho)-F(\rho)+\f{\rho^{2}}{2\eta^{2}}$ and $F(\rho|\Rho)$, $p(\rho|\Rho)$ be defined by \eqref{eq:F(rho-Rho)}. Then there exists a constant $C_{F,R}$ such that $p(\rho|\Rho)$ is bounded in terms of $F(\rho|\Rho)$ and $|{\rho}-\Rho|^{2}$ \ie 
\begin{equation}\label{eq:bound_rel_p_F}
p({\rho}|\Rho)\le C_{F,R} \, F({\rho}|\Rho) + \left(C_{F,R} + \f{1}{\eta^{2}}\right) |{\rho}-\Rho|^{2}.
\end{equation}
Similarly, there exists constant $C_F$ such that
\begin{equation}\label{eq:bound_notrel_p_F}
|\rho\,F'(\rho)| \leq C_F \, F(\rho) + C_F\, \rho^2 + C_F.
\end{equation}
\end{lem}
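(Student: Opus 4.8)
The plan is to separate the purely quadratic part of $p$ and then reduce the rest to a pointwise comparison of second derivatives. Writing $p(\rho)=q(\rho)+\f{\rho^{2}}{2\eta^{2}}$ with $q(\rho):=\rho F'(\rho)-F(\rho)$, the relative version of the quadratic term $\rho\mapsto\f{\rho^{2}}{2\eta^{2}}$ is exactly $\f{1}{2\eta^{2}}|\rho-\Rho|^{2}$, which already accounts for the $\f{1}{\eta^{2}}$ coefficient in \eqref{eq:bound_rel_p_F}. Hence it suffices to show that $q(\rho|\Rho):=q(\rho)-q(\Rho)-q'(\Rho)(\rho-\Rho)$ is bounded by $C_{F,R}\,F(\rho|\Rho)+C_{F,R}\,|\rho-\Rho|^{2}$. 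Note that $q'(\rho)=\rho F''(\rho)$ and $q''(\rho)=F''(\rho)+\rho F^{(3)}(\rho)$.

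I would use the integral form of the Taylor remainder: for a $C^{2}$ function $g$ one has
\[
g(\rho|\Rho)=\int_{\R}g''(s)\,w_{\rho,\Rho}(s)\,\diff s,
\]
where $w_{\rho,\Rho}\ge 0$ is supported on the segment joining $\Rho$ and $\rho$ and satisfies $\int_{\R}w_{\rho,\Rho}(s)\,\diff s=\tfrac12|\rho-\Rho|^{2}$; in particular $F(\rho|\Rho)$ and $q(\rho|\Rho)$ are the integrals of $F''$, resp.\ $q''$, against the \emph{same} nonnegative weight. Splitting $F=F_{1}+F_{2}$, Assumption~\ref{ass:potentialF} yields $|q_{1}''(s)|=|F_{1}''(s)+sF_{1}^{(3)}(s)|\le(1+C)F_{1}''(s)+C$, so integrating against $w_{\rho,\Rho}$ gives $q_{1}(\rho|\Rho)\le(1+C)F_{1}(\rho|\Rho)+\tfrac{C}{2}|\rho-\Rho|^{2}$. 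For the bounded part, both $q_{2}''(s)=F_{2}''(s)+sF_{2}^{(3)}(s)$ and $F_{2}''$ are bounded by Assumption~\ref{ass:potentialF}, so Taylor's theorem bounds both $|q_{2}(\rho|\Rho)|$ and $|F_{2}(\rho|\Rho)|$ by $C|\rho-\Rho|^{2}$. Using $F_{1}(\rho|\Rho)=F(\rho|\Rho)-F_{2}(\rho|\Rho)\le F(\rho|\Rho)+C|\rho-\Rho|^{2}$ and collecting the estimates, $q(\rho|\Rho)=q_{1}(\rho|\Rho)+q_{2}(\rho|\Rho)\le C_{F,R}\,F(\rho|\Rho)+C_{F,R}\,|\rho-\Rho|^{2}$; adding the quadratic contribution proves \eqref{eq:bound_rel_p_F}.

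For \eqref{eq:bound_notrel_p_F} I would argue directly: $|\rho F'(\rho)|\le|\rho F_{1}'(\rho)|+|\rho F_{2}'(\rho)|$, where by Assumption~\ref{ass:potentialF}, $|\rho F_{1}'(\rho)|\le C(F_{1}(\rho)+1)\le C(F(\rho)+\|F_{2}\|_{\infty}+1)$ since $F_{1}=F-F_{2}$, while $|\rho F_{2}'(\rho)|\le\|F_{2}'\|_{\infty}|\rho|\le\tfrac{\|F_{2}'\|_{\infty}}{2}(\rho^{2}+1)$ by Young's inequality; enlarging the constant gives the claim. The only delicate point is the bookkeeping in the first estimate: one must check that $q''$ is genuinely controlled by $F''$ up to an additive constant, which is exactly the content of the hypotheses $|uF_{1}^{(3)}(u)|\le C(F_{1}''(u)+1)$ and $sF_{2}^{(3)}(s)\in L^{\infty}$, and that the Taylor-remainder weight $w_{\rho,\Rho}$ is nonnegative irrespective of the sign of $\rho-\Rho$, so that the pointwise bound on $q''$ transfers to the relative quantities; everything else is routine.
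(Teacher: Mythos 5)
Your proof is correct and follows essentially the same route as the paper: both arguments rest on the integral (Taylor-remainder) representation of $F(\rho|\Rho)$ and of the relative pressure, the pointwise control $|F''+sF^{(3)}|\le C(F_1''+1)+C$ coming from Assumption \ref{ass:potentialF}, and the direct splitting $F=F_1+F_2$ for \eqref{eq:bound_notrel_p_F}. The only (cosmetic) differences are that you peel off the quadratic part $\rho^2/2\eta^2$ first and split $F(\rho|\Rho)$ into its $F_1$ and $F_2$ contributions, whereas the paper works with $p''$ directly and keeps $F(\rho|\Rho)$ intact; both yield the stated constants.
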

\begin{proof}
We write 
\begin{equation*}
p({\rho}|\Rho)=({\rho}-\Rho)^{2}\int_{0}^{1}\int_{0}^{\tau}p''(s{\rho}+(1-s)\Rho)\diff s\diff\tau,    
\end{equation*}
\begin{equation*}
F({\rho}|\Rho)=({\rho}-\Rho)^{2}\int_{0}^{1}\int_{0}^{\tau}F''(s{\rho}+(1-s)\Rho)\diff s\diff\tau.    
\end{equation*}
We note $h(s)=s {\rho}+(1-s)\Rho$ to simplify the notations. By definition $p'(\rho)=\rho\,(F''(\rho)+\f{1}{\eta^{2}})$. Therefore we obtain
\begin{align*}
p({\rho}|\Rho)&=({\rho}-\Rho)^{2}\int_{0}^{1}\int_{0}^{\tau}F_{1}''(h(s))+F_{2}''(h(s))+h(s)F_{1}^{(3)}(h(s))+h(s)F_{2}^{(3)}(h(s))\diff s\diff\tau+\f{1}{\eta^{2}}|{\rho}-\Rho|^{2}\\
&=F({\rho}|\Rho)+({\rho}-\Rho)^{2}\int_{0}^{1}\int_{0}^{\tau}h(s)F_{1}^{(3)}(h(s))+h(s)F_{2}^{(3)}(h(s))\diff s\diff\tau+\f{1}{\eta^{2}}|{\rho}-\Rho|^{2}.
\end{align*}

We note $I_{1}=\int_{0}^{1}\int_{0}^{\tau}h(s)F_{1}^{(3)}(h(s))\diff s\diff\tau$ and $I_{2}=\int_{0}^{1}\int_{0}^{\tau}h(s)F_{2}^{(3)}(h(s))\diff s\diff\tau$.
By assumptions on $|uF^{(3)}_{1}|$ we obtain 
\begin{equation*}
I_{1}\le C+C\int_{0}^{1}\int_{0}^{\tau}F_{1}''(h(s))\diff s\diff\tau\le C+C\int_{0}^{1}\int_{0}^{\tau}F_{1}''(h(s))+F_{2}''(h(s))\diff s \diff\tau,     
\end{equation*}
where the value of $C$ changed in the last inequality, using the boundedness assumption on $F_{2}''$. For $I_{2}$ we simply use boundedness of $|u F^{3}_2(u)|$ so that
\begin{equation*}
I_{2}\le C.
\end{equation*}
This concludes the proof of \eqref{eq:bound_rel_p_F}. Concerning \eqref{eq:bound_notrel_p_F}, we have
$$
\rho\, F'(\rho) = \rho\, F_1'(\rho) + \rho\, F_2'(\rho) \leq
C(1+ F_1(\rho)) + C \, \rho \leq C\,(1+F(\rho)) + C\left(\frac{1}{2} + \frac{\rho^2}{2}\right) + C,
$$
where we used estimate on $\rho\, F_1'(\rho)$, boundedness of $F_2'$, $F_2$ and inequality $2\rho \leq 1+\rho^2$. The proof is concluded.
\end{proof}

\bibliographystyle{abbrv}
\bibliography{fastlimit}

\begin{thebibliography}{10}

\bibitem{MR4102807}
A.~Abbatiello and E.~Feireisl.
\newblock On a class of generalized solutions to equations describing
  incompressible viscous fluids.
\newblock {\em Ann. Mat. Pura Appl. (4)}, 199(3):1183--1195, 2020.

\bibitem{MR1459885}
J.~J. Alibert and G.~Bouchitt\'{e}.
\newblock Non-uniform integrability and generalized {Y}oung measures.
\newblock {\em J. Convex Anal.}, 4(1):129--147, 1997.

\bibitem{MR4349783}
N.~J. Alves and A.~E. Tzavaras.
\newblock The relaxation limit of bipolar fluid models.
\newblock {\em Discrete Contin. Dyn. Syst.}, 42(1):211--237, 2022.

\bibitem{MR2481754}
P.~Antonelli and P.~Marcati.
\newblock On the finite energy weak solutions to a system in quantum fluid
  dynamics.
\newblock {\em Comm. Math. Phys.}, 287(2):657--686, 2009.

\bibitem{MR4412067}
P.~Antonelli and S.~Spirito.
\newblock Global existence of weak solutions to the
  {N}avier-{S}tokes-{K}orteweg equations.
\newblock {\em Ann. Inst. H. Poincar\'{e} C Anal. Non Lin\'{e}aire},
  39(1):171--200, 2022.

\bibitem{MR898047}
D.~G. Aronson and J.~L. V\'{a}zquez.
\newblock The porous medium equation as a finite-speed approximation to a
  {H}amilton-{J}acobi equation.
\newblock {\em Ann. Inst. H. Poincar\'{e} Anal. Non Lin\'{e}aire},
  4(3):203--230, 1987.

\bibitem{MR3613503}
C.~Audiard and B.~Haspot.
\newblock Global well-posedness of the {E}uler-{K}orteweg system for small
  irrotational data.
\newblock {\em Comm. Math. Phys.}, 351(1):201--247, 2017.

\bibitem{MR4416230}
D.~Basari\'{c}.
\newblock Existence of dissipative (and weak) solutions for models of general
  compressible viscous fluids with linear pressure.
\newblock {\em J. Math. Fluid Mech.}, 24(2):Paper No. 56, 22, 2022.

\bibitem{MR2354691}
S.~Benzoni-Gavage, R.~Danchin, and S.~Descombes.
\newblock On the well-posedness for the {E}uler-{K}orteweg model in several
  space dimensions.
\newblock {\em Indiana Univ. Math. J.}, 56(4):1499--1579, 2007.

\bibitem{MR2805464}
Y.~Brenier, C.~De~Lellis, and L.~Sz\'{e}kelyhidi, Jr.
\newblock Weak-strong uniqueness for measure-valued solutions.
\newblock {\em Comm. Math. Phys.}, 305(2):351--361, 2011.

\bibitem{MR1978317}
D.~Bresch, B.~Desjardins, and C.-K. Lin.
\newblock On some compressible fluid models: {K}orteweg, lubrication, and
  shallow water systems.
\newblock {\em Comm. Partial Differential Equations}, 28(3-4):843--868, 2003.

\bibitem{MR3961293}
D.~Bresch, M.~Gisclon, and I.~Lacroix-Violet.
\newblock On {N}avier-{S}tokes-{K}orteweg and {E}uler-{K}orteweg systems:
  application to quantum fluids models.
\newblock {\em Arch. Ration. Mech. Anal.}, 233(3):975--1025, 2019.

\bibitem{burger2022porous}
M.~Burger and A.~Esposito.
\newblock Porous medium equation and cross-diffusion systems as limit of
  nonlocal interaction.
\newblock {\em Nonlinear Anal.}, 235:Paper No. 113347, 2023.

\bibitem{CAHN1961795}
J.~W. Cahn.
\newblock On spinodal decomposition.
\newblock {\em Acta Metallurgica}, 9(9):795--801, 1961.

\bibitem{Cahn-Hilliard-1958}
J.~W. Cahn and J.~E. Hilliard.
\newblock Free energy of a nonuniform system. {I}. {I}nterfacial free energy.
\newblock {\em The Journal of Chemical Physics}, 28(2):258--267, 1958.

\bibitem{MR2979973}
R.~Carles, R.~Danchin, and J.-C. Saut.
\newblock Madelung, {G}ross-{P}itaevskii and {K}orteweg.
\newblock {\em Nonlinearity}, 25(10):2843--2873, 2012.

\bibitem{MR3913840}
J.~A. Carrillo, K.~Craig, and F.~S. Patacchini.
\newblock A blob method for diffusion.
\newblock {\em Calc. Var. Partial Differential Equations}, 58(2):Paper No. 53,
  53, 2019.

\bibitem{Debiec}
J.~A. Carrillo, T.~D{\k{e}}biec, P.~Gwiazda, and A.~{\'S}wierczewska-Gwiazda.
\newblock Dissipative measure-valued solutions to the {E}uler-{P}oisson
  equation.
\newblock {\em arXiv preprint arXiv:2109.07536}, 2021.

\bibitem{carrillo2023degenerate}
J.~A. Carrillo, C.~Elbar, and J.~Skrzeczkowski.
\newblock Degenerate {C}ahn-{H}illiard systems: From nonlocal to local.
\newblock {\em arXiv preprint arXiv:2303.11929}, 2023.

\bibitem{carrillo2023nonlocal}
J.~A. Carrillo, A.~Esposito, and J.~S.-H. Wu.
\newblock Nonlocal approximation of nonlinear diffusion equations.
\newblock {\em preprint arXiv:2302.08248}, 2023.

\bibitem{MR4160173}
J.~A. Carrillo, Y.~Peng, and A.~Wr\'{o}blewska-Kami\'{n}ska.
\newblock Relative entropy method for the relaxation limit of hydrodynamic
  models.
\newblock {\em Netw. Heterog. Media}, 15(3):369--387, 2020.

\bibitem{MR4126770}
J.~A. Carrillo, A.~Wr\'{o}blewska-Kami\'{n}ska, and E.~Zatorska.
\newblock Pressureless {E}uler with nonlocal interactions as a singular limit
  of degenerate {N}avier-{S}tokes system.
\newblock {\em J. Math. Anal. Appl.}, 492(1):124400, 27, 2020.

\bibitem{MR3583169}
F.~Charve.
\newblock Convergence of a low order non-local {N}avier-{S}tokes-{K}orteweg
  system: the order-parameter model.
\newblock {\em Asymptot. Anal.}, 100(3-4):153--191, 2016.

\bibitem{MR3008260}
F.~Charve and B.~Haspot.
\newblock Convergence of capillary fluid models: from the non-local to the
  local {K}orteweg model.
\newblock {\em Indiana Univ. Math. J.}, 60(6):2021--2059, 2011.

\bibitem{MR4149680}
G.~Cianfarani~Carnevale and C.~Lattanzio.
\newblock High friction limit for {E}uler-{K}orteweg and
  {N}avier-{S}tokes-{K}orteweg models via relative entropy approach.
\newblock {\em J. Differential Equations}, 269(12):10495--10526, 2020.

\bibitem{MR4612415}
T.~Crin-Barat and R.~Danchin.
\newblock Global existence for partially dissipative hyperbolic systems in the
  {$L^p$} framework, and relaxation limit.
\newblock {\em Math. Ann.}, 386(3-4):2159--2206, 2023.

\bibitem{MR4548999}
T.~Crin-Barat and L.-Y. Shou.
\newblock Diffusive relaxation limit of the multi-dimensional {J}in-{X}in
  system.
\newblock {\em J. Differential Equations}, 357:302--331, 2023.

\bibitem{MR3448925}
S.~Dai and Q.~Du.
\newblock Weak solutions for the {C}ahn-{H}illiard equation with degenerate
  mobility.
\newblock {\em Arch. Ration. Mech. Anal.}, 219(3):1161--1184, 2016.

\bibitem{david2021incompressible}
N.~David and M.~Schmidtchen.
\newblock On the incompressible limit for a tumour growth model incorporating
  convective effects.
\newblock {\em arXiv preprint arXiv:2103.02564, to appear in Comm. Pure Appl.
  Math.}, 2021.

\bibitem{MR4198717}
E.~Davoli, L.~Scarpa, and L.~Trussardi.
\newblock Nonlocal-to-local convergence of {C}ahn-{H}illiard equations:
  {N}eumann boundary conditions and viscosity terms.
\newblock {\em Arch. Ration. Mech. Anal.}, 239(1):117--149, 2021.

\bibitem{MR2600877}
C.~De~Lellis and L.~Sz\'{e}kelyhidi, Jr.
\newblock The {E}uler equations as a differential inclusion.
\newblock {\em Ann. of Math. (2)}, 170(3):1417--1436, 2009.

\bibitem{MR2960036}
S.~Demoulini, D.~M.~A. Stuart, and A.~E. Tzavaras.
\newblock Weak-strong uniqueness of dissipative measure-valued solutions for
  polyconvex elastodynamics.
\newblock {\em Arch. Ration. Mech. Anal.}, 205(3):927--961, 2012.

\bibitem{MR775191}
R.~J. DiPerna.
\newblock Measure-valued solutions to conservation laws.
\newblock {\em Arch. Rational Mech. Anal.}, 88(3):223--270, 1985.

\bibitem{MR877643}
R.~J. DiPerna and A.~J. Majda.
\newblock Oscillations and concentrations in weak solutions of the
  incompressible fluid equations.
\newblock {\em Comm. Math. Phys.}, 108(4):667--689, 1987.

\bibitem{MR3341206}
D.~Donatelli, E.~Feireisl, and P.~Marcati.
\newblock Well/ill posedness for the {E}uler-{K}orteweg-{P}oisson system and
  related problems.
\newblock {\em Comm. Partial Differential Equations}, 40(7):1314--1335, 2015.

\bibitem{MR4309603}
C.~D\"{u}ll, P.~Gwiazda, A.~Marciniak-Czochra, and J.~Skrzeczkowski.
\newblock {\em Spaces of measures and their applications to structured
  population models}, volume~36 of {\em Cambridge Monographs on Applied and
  Computational Mathematics}.
\newblock Cambridge University Press, Cambridge, 2022.

\bibitem{MR775366}
J.~E. Dunn and J.~Serrin.
\newblock On the thermomechanics of interstitial working.
\newblock {\em Arch. Rational Mech. Anal.}, 88(2):95--133, 1985.

\bibitem{DEBIEC2021204}
T.~Dębiec, B.~Perthame, M.~Schmidtchen, and N.~Vauchelet.
\newblock Incompressible limit for a two-species model with coupling through
  {B}rinkman's law in any dimension.
\newblock {\em J. Math. Pures Appl.}, 145:204--239, 2021.

\bibitem{elbar-mason-perthame-skrzeczkowski}
C.~Elbar, M.~Mason, B.~Perthame, and J.~Skrzeczkowski.
\newblock From {V}lasov equation to degenerate nonlocal {C}ahn-{H}illiard
  equation.
\newblock {\em Comm. Math. Phys.}, 401(1):1033--1057, 2023.

\bibitem{MR4504016}
C.~Elbar, B.~Perthame, and A.~Poulain.
\newblock Degenerate {C}ahn-{H}illiard and incompressible limit of a
  {K}eller-{S}egel model.
\newblock {\em Commun. Math. Sci.}, 20(7):1901--1926, 2022.

\bibitem{MR4574535}
C.~Elbar and J.~Skrzeczkowski.
\newblock Degenerate {C}ahn-{H}illiard equation: {F}rom nonlocal to local.
\newblock {\em J. Differential Equations}, 364:576--611, 2023.

\bibitem{MR1377481}
C.~M. Elliott and H.~Garcke.
\newblock On the {C}ahn-{H}illiard equation with degenerate mobility.
\newblock {\em SIAM J. Math. Anal.}, 27(2):404--423, 1996.

\bibitem{MR3567640}
E.~Feireisl, P.~Gwiazda, A.~\'{S}wierczewska Gwiazda, and E.~Wiedemann.
\newblock Dissipative measure-valued solutions to the compressible
  {N}avier-{S}tokes system.
\newblock {\em Calc. Var. Partial Differential Equations}, 55(6):Art. 141, 20,
  2016.

\bibitem{MR4271964}
E.~Feireisl, Y.-S. Kwon, and A.~Novotn\'{y}.
\newblock On the long-time behavior of dissipative solutions to models of
  non-{N}ewtonian compressible fluids.
\newblock {\em Arch. Ration. Mech. Anal.}, 241(1):475--495, 2021.

\bibitem{MR2384572}
E.~Feireisl and P.~Lauren\c{c}ot.
\newblock Non-isothermal {S}moluchowski-{P}oisson equations as a singular limit
  of the {N}avier-{S}tokes-{F}ourier-{P}oisson system.
\newblock {\em J. Math. Pures Appl. (9)}, 88(4):325--349, 2007.

\bibitem{MR3509212}
U.~S. Fjordholm, S.~Mishra, and E.~Tadmor.
\newblock On the computation of measure-valued solutions.
\newblock {\em Acta Numer.}, 25:567--679, 2016.

\bibitem{MR4241616}
S.~Frigeri, C.~G. Gal, and M.~Grasselli.
\newblock Regularity results for the nonlocal {C}ahn-{H}illiard equation with
  singular potential and degenerate mobility.
\newblock {\em J. Differential Equations}, 287:295--328, 2021.

\bibitem{MR3688414}
C.~G. Gal, A.~Giorgini, and M.~Grasselli.
\newblock The nonlocal {C}ahn-{H}illiard equation with singular potential:
  well-posedness, regularity and strict separation property.
\newblock {\em J. Differential Equations}, 263(9):5253--5297, 2017.

\bibitem{gallenmuller2023cahn}
D.~Gallenm{\"u}ller, P.~Gwiazda, A.~{\'S}wierczewska-Gwiazda, and
  J.~Wo{\'z}nicki.
\newblock {C}ahn-{H}illard and {K}eller-{S}egel systems as high-friction limits
  of {E}uler-{K}orteweg and {E}uler-{P}oisson equations.
\newblock {\em arXiv preprint arXiv:2304.04050}, 2023.

\bibitem{MR4154934}
D.~Gallenm\"{u}ller and E.~Wiedemann.
\newblock On the selection of measure-valued solutions for the isentropic
  {E}uler system.
\newblock {\em J. Differential Equations}, 271:979--1006, 2021.

\bibitem{MR3433629}
P.~Germain and P.~LeFloch.
\newblock Finite energy method for compressible fluids: the
  {N}avier-{S}tokes-{K}orteweg model.
\newblock {\em Comm. Pure Appl. Math.}, 69(1):3--61, 2016.

\bibitem{MR1453735}
G.~Giacomin and J.~L. Lebowitz.
\newblock Phase segregation dynamics in particle systems with long range
  interactions. {I}. {M}acroscopic limits.
\newblock {\em J. Statist. Phys.}, 87(1-2):37--61, 1997.

\bibitem{MR1638739}
G.~Giacomin and J.~L. Lebowitz.
\newblock Phase segregation dynamics in particle systems with long range
  interactions. {II}. {I}nterface motion.
\newblock {\em SIAM J. Appl. Math.}, 58(6):1707--1729, 1998.

\bibitem{MR3594360}
J.~Giesselmann, C.~Lattanzio, and A.~E. Tzavaras.
\newblock Relative energy for the {K}orteweg theory and related {H}amiltonian
  flows in gas dynamics.
\newblock {\em Arch. Ration. Mech. Anal.}, 223(3):1427--1484, 2017.

\bibitem{MR2182484}
P.~Gwiazda.
\newblock On measure-valued solutions to a two-dimensional gravity-driven
  avalanche flow model.
\newblock {\em Math. Methods Appl. Sci.}, 28(18):2201--2223, 2005.

\bibitem{MR4093617}
P.~Gwiazda, O.~Kreml, and A.~\'{S}wierczewska Gwiazda.
\newblock Dissipative measure-valued solutions for general conservation laws.
\newblock {\em Ann. Inst. H. Poincar\'{e} C Anal. Non Lin\'{e}aire},
  37(3):683--707, 2020.

\bibitem{MR3424896}
P.~Gwiazda, A.~\'{S}wierczewska Gwiazda, and E.~Wiedemann.
\newblock Weak-strong uniqueness for measure-valued solutions of some
  compressible fluid models.
\newblock {\em Nonlinearity}, 28(11):3873--3890, 2015.

\bibitem{MR2760987}
M.~Heida and J.~M\'{a}lek.
\newblock On compressible {K}orteweg fluid-like materials.
\newblock {\em Internat. J. Engrg. Sci.}, 48(11):1313--1324, 2010.

\bibitem{MR3987806}
X.~Huo, A.~J\"{u}ngel, and A.~E. Tzavaras.
\newblock High-friction limits of {E}uler flows for multicomponent systems.
\newblock {\em Nonlinearity}, 32(8):2875--2913, 2019.

\bibitem{MR1836527}
D.~Jamet, O.~Lebaigue, N.~Coutris, and J.~M. Delhaye.
\newblock The second gradient method for the direct numerical simulation of
  liquid-vapor flows with phase change.
\newblock {\em J. Comput. Phys.}, 169(2):624--651, 2001.

\bibitem{kortewlg1901forme}
D.~J. Korteweg.
\newblock Sur la forme qui prennent les équations du mouvement des fluids si
  l’on tient compte des forces capillaires par des variations de densité
  considérables mais continues et sur la théorie de la capillarité dans
  l’hypothèse d’une variation continue.
\newblock {\em Archives Néerlandaises des sciences exactes et naturelles},
  6:1--24, 1901.

\bibitem{MR3056757}
C.~Lattanzio and A.~E. Tzavaras.
\newblock Relative entropy in diffusive relaxation.
\newblock {\em SIAM J. Math. Anal.}, 45(3):1563--1584, 2013.

\bibitem{MR3615546}
C.~Lattanzio and A.~E. Tzavaras.
\newblock From gas dynamics with large friction to gradient flows describing
  diffusion theories.
\newblock {\em Comm. Partial Differential Equations}, 42(2):261--290, 2017.

\bibitem{lemarie2023parabolic}
V.~Lemari{\'e}.
\newblock Parabolic-elliptic {K}eller-{S}egel's system.
\newblock {\em arXiv preprint arXiv:2307.05981}, 2023.

\bibitem{MR1409366}
J.~M\'{a}lek, J.~Ne\v{c}as, M.~Rokyta, and
  M.~R$\accentset{\circ}{\text{u}}$\v{z}i\v{c}ka.
\newblock {\em Weak and measure-valued solutions to evolutionary {PDE}s},
  volume~13 of {\em Applied Mathematics and Mathematical Computation}.
\newblock Chapman \& Hall, London, 1996.

\bibitem{MR917603}
P.~Marcati.
\newblock Approximate solutions to conservation laws via convective parabolic
  equations.
\newblock {\em Comm. Partial Differential Equations}, 13(3):321--344, 1988.

\bibitem{MR1042662}
P.~Marcati and A.~Milani.
\newblock The one-dimensional {D}arcy's law as the limit of a compressible
  {E}uler flow.
\newblock {\em J. Differential Equations}, 84(1):129--147, 1990.

\bibitem{MR4001523}
A.~Miranville.
\newblock {\em The {C}ahn-{H}illiard equation. Recent advances and
  applications}, volume~95 of {\em CBMS-NSF Regional Conference Series in
  Applied Mathematics}.
\newblock Society for Industrial and Applied Mathematics (SIAM), Philadelphia,
  PA, 2019.
\newblock Recent advances and applications.

\bibitem{Perthame-Hele-Shaw}
B.~Perthame, F.~Quir\'{o}s, and J.~L. V\'{a}zquez.
\newblock The {H}ele-{S}haw asymptotics for mechanical models of tumor growth.
\newblock {\em Arch. Ration. Mech. Anal.}, 212(1):93--127, 2014.

\bibitem{Poiatti2022}
A.~Poiatti.
\newblock The 3{D} strict separation property for the nonlocal
  {C}ahn-{H}illiard equation with singular potential.
\newblock {\em arXiv preprint arXiv:2303.06013v1}.

\bibitem{MR2184845}
C.~Rohde.
\newblock On local and non-local {N}avier-{S}tokes-{K}orteweg systems for
  liquid-vapour phase transitions.
\newblock {\em ZAMM Z. Angew. Math. Mech.}, 85(12):839--857, 2005.

\bibitem{MR3340997}
L.~Sz\'{e}kelyhidi, Jr.
\newblock From isometric embeddings to turbulence.
\newblock In {\em H{CDTE} lecture notes. {P}art {II}. {N}onlinear hyperbolic
  {PDE}s, dispersive and transport equations}, volume~7 of {\em AIMS Ser. Appl.
  Math.}, page~63. Am. Inst. Math. Sci. (AIMS), Springfield, MO, 2013.

\bibitem{MR870025}
J.~L. V\'{a}zquez.
\newblock Hyperbolic aspects in the theory of the porous medium equation.
\newblock In {\em Metastability and incompletely posed problems ({M}inneapolis,
  {M}inn., 1985)}, volume~3 of {\em IMA Vol. Math. Appl.}, pages 325--342.
  Springer, New York, 1987.

\bibitem{MR2286292}
J.~L. V\'{a}zquez.
\newblock {\em The porous medium equation}.
\newblock Oxford Mathematical Monographs. The Clarendon Press, Oxford
  University Press, Oxford, 2007.
\newblock Mathematical theory.

\bibitem{MR3838055}
E.~Wiedemann.
\newblock Weak-strong uniqueness in fluid dynamics.
\newblock In {\em Partial differential equations in fluid mechanics}, volume
  452 of {\em London Math. Soc. Lecture Note Ser.}, pages 289--326. Cambridge
  Univ. Press, Cambridge, 2018.

\end{thebibliography}
\end{document}